\newtheorem{thm}{Theorem}[section]
\newtheorem{remm}{Remark}[section]
\newtheorem{lem}{Lemma}[section]
\newtheorem{remark}[remm]{Remark}
\newtheorem{proposition}[thm]{Proposition}
\numberwithin{equation}{section}
\newcommand{\vq}{\vartheta}
\newcommand{\QQ}{Q}
\newcommand{\cset}{{\mathbb C}}
\newcommand{\rset}{{\mathbb R}}
\newcommand{\nset}{{\mathbb N}}
\newcommand{\ssy}{\scriptscriptstyle}
\newcommand{\uu}{u}
\newcommand{\UUU}{W}
\newcommand{\iii}{{\rm i}}
\newcommand{\mff}{{\mathfrak m}}
\newcommand{\gff}{{\mathfrak n}}
\newcommand{\mol}{{\mathfrak n}}
\newcommand{\mfg}{{\mathfrak g}}
\newcommand{\Ree}{\text{\rm Re}}
\newcommand{\Imm}{\text{\rm Im}}
\newcommand{\III}{I}
\newcommand{\half}{\frac{1}{2}}
\newcommand{\quarter}{\frac{1}{4}}
\newcommand{\dspace}{{{\mathbb C}}_{h}^{\circ}}
\newcommand{\gspace}{{\mathbb C}_h}
\newcommand{\rspace}{{\mathbb R}_h}
\newcommand{\rspacez}{{\mathbb R}_h^{\circ}}
\newcommand{\SSpace}{{\sf S}_h^{\ssy\cset}}
\newcommand{\SSpacer}{{\sf S}_h^{\ssy\rset}}
\newcommand{\ddelta}{\delta_{\star}}
\newcommand{\PP}{{\overset{\ssy\circ}{\sf I}}_h}
\newcommand{\emid}{e_{\ssy\rm mid}}
\begin{document}
\title[]
{Error estimation of the\\
Relaxation Finite Difference Scheme \\
for the nonlinear Schr{\"o}dinger Equation}
\author[]{Georgios E. Zouraris$^{\ddag}$}
\thanks
{$^{\ddag}$ Department of Mathematics and Applied Mathematics,
Division of Applied Mathematics: Differential Equations and Numerical Analysis,
University of Crete, GR-700 13 Voutes Campus, Heraklion, Crete, Greece.}
\subjclass{65M12, 65M60}
\keywords {Relaxation Scheme,
nonlinear Schr{\"o}dinger equation,
finite differences,
Dirichlet boundary conditions,
optimal order error estimates}
%
%
%
%
\begin{abstract}
%
%
We consider an initial- and boundary- value problem for the
nonlinear Schr{\"o}dinger equation with homogeneous Dirichlet
boundary conditions in the one space dimension case. We discretize
the problem in space by a central finite difference method
and in time by the Relaxation Scheme proposed by C. Besse
[C. R. Acad. Sci. Paris S{\'e}r. I {\bf 326} (1998), 1427-1432].
We provide optimal order error estimates, in the discrete
$L_t^{\infty}(H_x^1)$ norm, for the approximation error at the time
nodes and at the intermediate time nodes.
In the context of the non linear Schr{\"o}dinger
equation, it is the first time that the derivation of an error
estimate, for a fully discrete method based on the Relaxation Scheme,
is completely addressed.
\end{abstract}
\maketitle
%
%
%
%
%
%
%
%
\section{Introduction}
\subsection{Formulation of the problem}
Let $T>0$,  $\III:=[x_a,x_b]$ be a bounded closed
interval in ${\mathbb R}$, $\QQ:=[0,T]\times I$ and
$\uu:\QQ\mapsto\cset$
be the solution of the following initial and
boundary value problem:
\begin{gather}
\uu_t=\iii\,\uu_{xx}+\iii\,g(|\uu|^2)\,\uu+f
\quad\text{\rm on}\ \ \QQ,
\label{PSL_a}\\
\uu(t,x_a)=u(t,x_b)=0\quad\forall\,t\in[0,T],
\label{PSL_b}\\
\uu(0,x)=\uu_0(x)\quad\forall\,x\in\III,
\label{PSL_c}
\end{gather}
where $g\in C([0,+\infty),\rset)$,
$f\in C(\QQ,\cset)$ and $\uu_0\in C(\III,\cset)$ with
\begin{equation}\label{PSL_d}
u_0(x_a)=u_0(x_b)=0.
\end{equation}
\par
In addition, we assume that the problem above admits a unique solution
$u\in C_{t,x}^{1,2}(Q)$ and that the data $g$, $f$ and $u_0$ are smooth
enough and compatible in order to ensure that the solution and
its higher derivatives are sufficiently, for our purposes, smooth on $Q$.
%
%
%
\subsection{The Relaxation Finite Difference method}
%
%
Let $\nset$ be the set of all positive integers and ${\sf L}:=x_b-x_a$. For given
$N\in\nset$, we define a uniform partition of the time interval $[0,T]$ with
time-step $\tau:=\tfrac{T}{N}$, nodes $t_n:=n\,\tau$ for $n=0,\dots,N$,
and intermediate nodes $t^{n+\half}=t_n+\tfrac{\tau}{2}$ for
$n=0,\dots,N-1$. Also, for given $J\in\nset$, we consider a uniform partition
of $\III$ with mesh-width $h:=\tfrac{{\sf L}}{J+1}$ and nodes 
$x_j:=x_a+j\,h$ for $j=0,\dots,J+1$. Then, we introduce the discrete spaces
\begin{gather*}
\gspace:=\left\{\,(v_j)_{j=0}^{\ssy J+1}:\,\, v_j\in\cset,\,\, j=0,\dots,J+1\,\right\},
\quad
{\dspace}:=\left\{\,(v_j)_{j=0}^{\ssy J+1}\in\gspace:\,\, v_0=v_{\ssy J+1}=0\,\right\},\\
\rspace:=\left\{\,(v_j)_{j=0}^{\ssy J+1}:\,\, v_j\in\rset,\,\, j=0,\dots,J+1\,\right\},
\quad
\rspacez:=\dspace\cap\rspace,
\end{gather*}
a discrete product operator
$\cdot\otimes\cdot:\gspace\times\gspace\mapsto\gspace$ by
\begin{equation*}
(v{\otimes}w)_j=v_j\,w_j,\quad j=0,\dots,J+1,
\quad\forall\,v,w\in\gspace,
\end{equation*}
a discrete Laplacian operator $\Delta_h:\dspace\mapsto\dspace$ by
\begin{equation*}
\Delta_hv_j:=\tfrac{v_{j-1}-2v_j+v_{j+1}}{h^2},
\quad j=1,\dots,J,\quad\forall\,v\in\dspace,
\end{equation*}
In addition, we introduce o\-pe\-rators
${\sf I}_h:C(\III,{\mathbb C})\mapsto\gspace$ and
$\PP:C(\III,{\mathbb C})\mapsto\dspace$, which, for
given $z\in C(\III,\cset)$, are defined, respectively,
by $({\sf I}_h z)_j:=z(x_j)$ for $j=0,\dots,J+1$ and
$(\PP z)_j:=z(x_j)$ for $j=1,\dots,J$, 
and a Discrete Elliptic Projection operator
${\sf R}_h: C^2(I;\cset)\mapsto\dspace$ (cf. \cite{AD1991})
by requiring
\begin{equation}\label{ELP_1}
\Delta_h({\sf R}_h[v])=\PP(v'')\quad\forall\,v\in C^2(I;\cset).
\end{equation}
Finally, for $\ell\in\nset$ and for any function
$q:\cset^{\,\ell}\mapsto\cset$ and any
$w=(w^1,\dots,w^{\ell})\in(\gspace)^{\ell}$, we define
$q(w)\in\gspace$ by
$(q(w))_j:=g\left(w_j^1,\dots,w^{\ell}_j\right)$
for $j=0,\dots,J+1$.
%
%
%
\par
The Relaxation Finite Difference (RFD) method combines a standard finite
difference method for space discretization, with the Besse Relaxation
Scheme for time-stepping (cf. Section 5 in \cite{Besse2}).
Its algorithm consists of the following steps:
\par\noindent\vskip0.2truecm\par
{\tt Step 1}: Define $\UUU^0\in\dspace$ by
\begin{equation}\label{BRS_1}
\UUU^0:={\sf R}_h[\uu_0]
\end{equation}
and find $\UUU^{\half}\in\dspace$ such that
\begin{equation}\label{BRS_12}
\UUU^{\half}-\UUU^0
=\iii\,\tfrac{\tau}{2}\,\Delta_{h}\left(\,\tfrac{\UUU^{\half}+\UUU^0}{2}\,\right)
+\iii\,\tfrac{\tau}{2}\,g\big(|\uu^0|^2\big)
\otimes\left(\tfrac{\UUU^{\half}+\UUU^0}{2}\right)
+\tfrac{\tau}{2}\,\PP\left[\tfrac{f(t^{\frac{1}{2}},\cdot)+f(t_0,\cdot)}{2}\right].
\end{equation}
\par\noindent\vskip0.2truecm\par
{\tt Step 2}:
Define $\Phi^{\half}\in\rspace$ by
\begin{equation}\label{BRS_13}
\Phi^{\half}:=g(|\UUU^{\half}|^2)
\end{equation}
and find $\UUU^1\in\dspace$ such that
\begin{equation}\label{BRS_2}
\UUU^1-\UUU^0=\iii\,\tau
\,\Delta_{h}\left(\,\tfrac{\UUU^1+\UUU^0}{2}\,\right)
+\iii\,\tau\,\Phi^{\half}\otimes\left(\tfrac{\UUU^1+\UUU^0}{2}\right)
+\tau\,\PP\left[\tfrac{f(t_1,\cdot)+f(t_0,\cdot)}{2}\right].
\end{equation}
\par\noindent\vskip0.2truecm\par
{\tt Step 3}: For $n=1,\dots,N-1$, first define $\Phi^{n+\half}\in\rspace$ by
\begin{equation*}\label{BRS_3}
\Phi^{n+\half}:=2\,g(|\UUU^n|^2)-\Phi^{n-\half}
\end{equation*}
and then find $\UUU^{n+1}\in\dspace$ such that
\begin{equation}\label{BRS_4}
\UUU^{n+1}-\UUU^{n}=\iii\,\tau\,\Delta_h\left(\tfrac{\UUU^{n+1}+\UUU^{n}}{2}\right)
+\iii\,\tau\,\Phi^{n+\half}\otimes\left(\tfrac{\UUU^{n+1}+\UUU^{n}}{2}\right)
+\tau\,\PP\left[\tfrac{f(t_{n+1},\cdot)+f(t_n,\cdot)}{2}\right].
\end{equation}
\begin{remark}
It is easily verified that the (RFD) method requires the numerical solution of
a linear tridiagonal system of algebraic equation at every time step.
\end{remark}
\begin{remark}
The (RFD) approximations are, unconditionally,
well-posed (see Lemma~\ref{Plan_1}).
\end{remark}
\begin{remark}
In \eqref{BRS_13}, we construct a second order approximation
$\Phi^{\half}$ of $g(|u(t^{\half},\cdot)|^2)$, instead of the
the first order approximation $g(|{\sf I}_h[u_0]|^2)$ proposed
in \cite{Besse2}. Later, we show that both choices yield a second
order convergence of the numerical method at the time nodes
(see Theorems  \ref{DR_Final} and \ref{CNV_Final_2}).
\end{remark}
%
%
%
\subsection{Related references and main results}
It is well known (see, e.g. \cite{Akrivis1}, \cite{KarAD}, \cite{ADKar},
\cite{KarMak}) that the application of an implicit time-stepping method
to a non linear Schr{\"o}dinger equation,
creates, at every time step, the need of approximating the solution to a
system of complex, non linear algebraic equations via an iterative solver.
One way to keep the computational complexity of the method
in a predefine level, is to employ a linear implicit
time-stepping method (see, e.g., \cite{Sulem}, \cite{Fei},
\cite{Georgios1}, \cite{Wang}) that handles the linear part of the equation implicitly
and the non linear part of the equation explicitly or semi-implicitly
and thus require, at every time step, the solution of a linear system
of algebraic equations.
Within this context, C. Besse \cite{Besse1} proposed
the {\it Relaxation Scheme} (RS) which was a new,
linear implicit, time-stepping method, conserving in
a discrete way the charge and the energy.  The Relaxation Scheme (RS)
along with a finite element or a finite difference space discretization,
is computationally efficient (see, e.g., \cite{ABB2013}, \cite{Mitsotakis},
\cite{Patrick}) and performs as a second order method (see, e.g.,
\cite{Besse2}, \cite{Mitsotakis}).
Later, C. Besse \cite{Besse2}, analysing the (RS) as
a semidiscrete in time method approximating the solution of the
Cauchy problem for the nonlinear Schr{\"o}dinger equation with
power non-linearity, show its convergence for small final time $T$,
without concluding a convergent rate with respect to the time step.
C. Besse et al. \cite{Besse3} focusing on the
cubic nonlinear Schr{\"o}dinger equation, combine the
stability results in \cite{Besse2} with a proper consistency
argument based on the Taylor formula and bound the approximation error of
the (RS) in the $H^s(\rset^d)-$norm by the error approximating 
$g(|u(t^{\half},\cdot)|^2)$ coupled with a,
second order with respect to the time-step, additive term.
The error estimate obtained yields a first order convergence of
the (RS) when  $g(|u(t^{\half},\cdot)|^2)$ is approximated by $g(|u_0|^2)$
(see, e.g. \cite{Besse1}, \cite{Besse2}),
and thus it is not able to explain its second order convergence
that has been observed experimentally by several authors
(see, e.g., \cite{Besse2}, \cite{Mitsotakis}).
Also, the error analysis developed in \cite{Besse2} and \cite{Besse3}
is based on the derivation of a priori bounds for the time-discrete
approximations in the $H^{s+2}(\rset^d)-$norm with $s>\tfrac{d}{2}$,
(cf. Hypotheses 2 in \cite{Besse2}).
However, this approach can not be adopted for the analysis of
fully discrete methods, where the (RS) is coupled with a finite
difference or a finite element method for space discretization.
The problem is coming from the fact that on one hand
the finite element approximations are, usually, only $H^1$ functions,
and on the other hand the finite difference approximations are not
able to mimic, in a discrete way, all the compatibility conditions
that the solution to the continuous problem satisfy. This indicates
that the error estimation in the fully discrete case has to follow
a different path.
\par 
Recently, considering the approximation
of a semilinear heat equation in the one space dimension by the (RS) coupled with
a central finite difference method,
we provided a second order error estimate using energy-type techniques
\cite{Georgios2}.
Unfortunately, the latter convergence analysis
can not be extended in the case of the nonlinear Schr{\"o}dinger equation,
because it is based on the stability properties of the parabolic problems.
%
%
\par
In the work at hands, our aim is to contribute to the understanding of the
convergence nature of the (RS) by investigating the convergence of the
(RFD) method formulated in \eqref{BRS_1}-\eqref{BRS_4}
for the approximation of the solution to the problem \eqref{PSL_a}-\eqref{PSL_d}.
By building up a proper stability argument based on \cite{Besse2}
and formulating a proper modified version of the numerical method
based on the framework proposed in \cite{Georgios1}, we are able to
prove a new, optimal, second order error estimate in a discrete
$L_t^{\infty}(H_x^1)-$norm at the nodes and the intermediate time nodes,
without restrictions on the final time $T$ and avoiding to impose 
coupling conditions on the mesh parameters. Also, considering
the first order in time approximation $\Phi^{\half}=g(|{\sf I}_h[u_0]|^2)$,
we show  that convergence of the method is still second order at the nodes
while remain first order in time at the intermediate nodes.
\par
We close this section by giving a brief overview of the paper.
In Section~\ref{Section2} we introduce notation and we prove
a series of auxiliary results that we will often use later
in the analysis of the numerical method.
Section~\ref{Section3} is dedicated to the definition and estimations
of the consistency errors allong with the presentation of the approximation
properties of the Discrete Elliptic Projection operator.
Finally, Section~\ref{Section4} contains the convergence analysis of the
(RFD) method via the construction and the analysis of a modified scheme.
%
%
%
%
%
\section{Preliminaries}\label{Section2}
%
%
In this section, we introduce additional notation and present a
series of basic auxiliary results that we will use often in
the convergence analysis of the numerical method.
\subsection{Additional notation}
Let us define the discrete space
$\SSpace:=\left\{\,(z_j)_{j=0}^{\ssy J}:\,\,z_j\in\cset,\,\, j=0,\dots,J\right\}$
along with its real subset
$\SSpacer:=\left\{\,(z_j)_{j=0}^{\ssy J}:\,\,z_j\in\rset,\,\, j=0,\dots,J\right\}$,
and we define the discrete space derivative operator
$\delta_h:\gspace\mapsto\SSpace$ by
$\delta_hv_j:=\tfrac{v_{j+1}-v_j}{h}$ for $j=0,\dots,J$ and $v\in\gspace$.
On $\SSpace$ we define the inner product $(\!\!(\cdot,\cdot)\!\!)_{0,h}$ by
$(\!\!(z,v)\!\!)_{0,h}:=h\sum_{j=0}^{\ssy J}z_j\,{\overline v_j}$ for $z,v\in\SSpace$,
and we will denote by $|\!|\!|\cdot|\!|\!|_{0,h}$ the corresponding norm,
i.e. $|\!|\!|z|\!|\!|_{0,h}:=\sqrt{(\!\!(z,z)\!\!)_{0,h}}$ for $z\in\SSpace$.
Also, we define a discrete maximum norm on $\SSpace$
by $|\!|\!|v|\!|\!|_{\infty,h}:=\max_{0\leq{j}\leq{\ssy J}}|v_j|$ for $v\in\SSpace$.
\par
We provide $\gspace$ with the discrete inner product
$(\cdot,\cdot)_{0,h}$ by $(v,z)_{0,h}:=h\sum_{j=0}^{\ssy J+1}v_j\,{\overline z_j}$
for $v,z\in\gspace$, and we shall denote by $\|\cdot\|_{0,h}$ its
induced norm, i.e.
$\|v\|_{0,h}:=\sqrt{(v,v)_{0,h}}$ for $v\in\gspace$.
Also, we define on $\gspace$
a discrete maximum norm $|\cdot|_{\infty,h}$
by $|w|_{\infty,h}:=\max_{0\leq j\leq{J+1}}|w_j|$ for
$w\in\gspace$, a discrete $H^1-$seminorm $|\cdot|_{1,h}$
by $|w|_{1,h}:=|\!|\!|\delta_hw|\!|\!|_{0,h}$ for $w\in\gspace$
and a discrete $H^1-$norm $\|\cdot\|_{1,h}$ by
$\|w\|_{1,h}:=(\|w\|_{0,h}^2+|w|_{1,h}^2)^{\half}$ for $w\in\dspace$.
\subsection{Auxiliary results}
\par
It is easily seen that, for $v\in\dspace$,
the following inequalities hold
\begin{gather}
|v|_{\infty,h}\leq\,\sqrt{{\sf L}}\,|v|_{1,h}\label{LH1}\\
\|v\|_{0,h}\leq\,{\sf L}\,|v|_{1,h}\label{dpoincare}\\
|v|_{1,h}\leq\,2\,h^{-1}\,\|v\|_{0,h}.\label{H1L2}
\end{gather}
Under the light of \eqref{dpoincare}, the seminorm
$|\cdot|_{1,h}$ is a norm on $\dspace$
which is equivalent to $\|\cdot\|_{1,h}$.
%
%
%
%
%
\begin{lem}\label{LemmaX1}
For all $v,z\in\dspace$ it holds that
\begin{gather}
(\Delta_hv,z)_{0,h}=-(\!\!(\delta_hv,\delta_hz)\!\!)_{0,h}=(v,\Delta_hz)_{0,h},
\label{NewEra1}\\
(\Delta_hv,v)_{0,h}=-|v|^2_{1,h}.\label{NewEra2}
\end{gather}
\end{lem}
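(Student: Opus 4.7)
The plan is to prove the first identity in \eqref{NewEra1} by a discrete summation-by-parts (Abel transformation), exploiting the homogeneous Dirichlet boundary values of $v$ and $z$ to kill the boundary contributions; the second equality in \eqref{NewEra1} then follows by symmetry (either by swapping the roles of $v$ and $z$ and conjugating, or by repeating the same argument with the discrete Laplacian acting on $z$), and \eqref{NewEra2} is immediate by choosing $z=v$.

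In detail, I would first expand
\[
(\Delta_h v,z)_{0,h}=h\sum_{j=0}^{J+1}(\Delta_h v)_j\,\overline{z_j}
=\frac{1}{h}\sum_{j=1}^{J}\bigl(v_{j-1}-2v_j+v_{j+1}\bigr)\overline{z_j},
\]
where I used $z_0=z_{J+1}=0$ to restrict the range of summation to $j=1,\dots,J$ (and consequently I only need $(\Delta_h v)_j$ for $j=1,\dots,J$, which is exactly where it is defined). Next I would write
\[
v_{j-1}-2v_j+v_{j+1}=(v_{j+1}-v_j)-(v_j-v_{j-1})=h\bigl(\delta_h v_j-\delta_h v_{j-1}\bigr),
\]
obtaining
\[
(\Delta_h v,z)_{0,h}=\sum_{j=1}^{J}\bigl(\delta_h v_j-\delta_h v_{j-1}\bigr)\overline{z_j}.
\]

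The main step is an index shift: writing the right-hand side as
\[
\sum_{j=1}^{J}\delta_h v_j\,\overline{z_j}-\sum_{k=0}^{J-1}\delta_h v_k\,\overline{z_{k+1}},
\]
and then adding and subtracting the $j=0$ and $j=J$ end-terms, I would reorganize into
\[
-h\sum_{j=0}^{J}\delta_h v_j\,\overline{\delta_h z_j}\;+\;\bigl[\text{boundary terms involving }z_0,z_{J+1}\bigr].
\]
Because $z\in\dspace$, the boundary terms vanish identically, and the remaining sum is precisely $-(\!\!(\delta_h v,\delta_h z)\!\!)_{0,h}$, which proves the first equality of \eqref{NewEra1}. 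The second equality follows by interchanging the roles of $v$ and $z$ in the argument just given (both belong to $\dspace$), or equivalently by noting that the expression $-(\!\!(\delta_h v,\delta_h z)\!\!)_{0,h}$ equals $\overline{-(\!\!(\delta_h z,\delta_h v)\!\!)_{0,h}}=\overline{(\Delta_h z,v)_{0,h}}=(v,\Delta_h z)_{0,h}$.

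Finally, \eqref{NewEra2} follows from \eqref{NewEra1} by setting $z=v$, since $(\!\!(\delta_h v,\delta_h v)\!\!)_{0,h}=|\!|\!|\delta_h v|\!|\!|_{0,h}^{2}=|v|_{1,h}^{2}$ by the very definition of the seminorm. I do not anticipate any real obstacle; the only point requiring some care is tracking the two boundary contributions produced by the index shift and observing that both of them are proportional to $z_0$ and $z_{J+1}$, hence are killed by the membership $z\in\dspace$.
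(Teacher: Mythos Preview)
Your proposal is correct and follows essentially the same route as the paper: both arguments write $(\Delta_h v)_j=(\delta_h v)_j-(\delta_h v)_{j-1}$, perform the index shift, and use $z_0=z_{J+1}=0$ to extend the sums and absorb the boundary terms, after which \eqref{NewEra2} is the special case $z=v$. Your extra remark deriving the second equality in \eqref{NewEra1} via conjugation/symmetry is a welcome detail that the paper leaves implicit.
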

%
%
%
%
\begin{proof}
Let $v,z\in\dspace$. First, we obtain \eqref{NewEra1} 
proceeding as follows
\begin{equation*}
(\Delta_hv,z)_{0,h}=\sum_{j=1}^{\ssy J}\left[(\delta_hv)_{j}-(\delta_hv)_{j-1}\right]
\,{\overline z_j}
=\sum_{j=0}^{\ssy J}(\delta_hv)_{j}\,{\overline z_j}
-\sum_{j=0}^{\ssy J}(\delta_hv)_j\,{\overline z_{j+1}}
=-(\!\!(\delta_hv,\delta_hz)\!\!)_{0,h}.
\end{equation*}
Then. we observe that \eqref{NewEra2} is a simple
consequence of \eqref{NewEra1}.
\end{proof}
%
%
%
%
%
\begin{lem}\label{LemmaX2}
Let $\varepsilon>0$, ${\mathfrak g}\in C^2(\rset;\rset)$,
$\mfg'_{\varepsilon}:=\sup_{|x|\in[0,\varepsilon]}|\mfg'(x)|$,
$\mfg''_{\varepsilon}:=\sup_{|x|\in[0,\varepsilon]}|\mfg''(x)|$
and
${\mathfrak R}_h^{\varepsilon}:=\{v\in\rspace:\,\,|v|_{\infty,h}\leq\varepsilon\}$.
Then, for $v,w\in{\mathfrak R}_h^{\varepsilon}$, it holds that
\begin{equation}\label{BasicHot1}
\|\mfg(v)-\mfg(w)\|_{0,h}\leq\mfg'_{\varepsilon}\,\|v-w\|_{0,h}
\end{equation}
and
\begin{equation}\label{BasicHot2}
|\mfg(v)-\mfg(w)|_{1,h}\leq
\mfg'_{\varepsilon}\,|v-w|_{1,h}
+\mfg''_{\varepsilon}
\,|\!|\!|\delta_hw|\!|\!|_{\infty,h}\,\|v-w\|_{0,h}.
\end{equation}
\end{lem}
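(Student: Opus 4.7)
The plan is to establish the two estimates separately: \eqref{BasicHot1} reduces almost immediately to a nodewise mean value theorem, while \eqref{BasicHot2} requires a careful double splitting of the discrete derivative of $\mfg(v)-\mfg(w)$.

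For \eqref{BasicHot1} I would argue at each node. Since $v_j,w_j\in[-\varepsilon,\varepsilon]$ and $\mfg$ is $C^2$, the mean value theorem gives $|\mfg(v_j)-\mfg(w_j)|\le\mfg'_{\varepsilon}|v_j-w_j|$ for every $j$; squaring, multiplying by $h$, summing over $j=0,\dots,J+1$ and taking square roots yields the claim. No subtlety here.

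For \eqref{BasicHot2}, the key identity to work with is the integral representation
\[
\mfg(\alpha)-\mfg(\beta)=(\alpha-\beta)\int_0^1 \mfg'\bigl(\beta+s(\alpha-\beta)\bigr)\,ds.
\]
Applying this to the pairs $(v_{j+1},v_j)$ and $(w_{j+1},w_j)$, I get
$\mfg(v_{j+1})-\mfg(v_j)=A_j(v_{j+1}-v_j)$ and $\mfg(w_{j+1})-\mfg(w_j)=B_j(w_{j+1}-w_j)$
with $|A_j|\le\mfg'_{\varepsilon}$. The difference, divided by $h$, is then split as
\[
\bigl(\delta_h(\mfg(v)-\mfg(w))\bigr)_j
=A_j\,(\delta_h(v-w))_j+(A_j-B_j)(\delta_hw)_j.
\]
The second-derivative estimate then enters through
$|A_j-B_j|\le\tfrac12\,\mfg''_{\varepsilon}\bigl(|v_j-w_j|+|v_{j+1}-w_{j+1}|\bigr)$,
obtained by applying the mean value theorem to $\mfg'$ inside the integral and integrating in $s$.

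Putting these pointwise bounds together and pulling out $|\!|\!|\delta_hw|\!|\!|_{\infty,h}$, I take the discrete $|\!|\!|\cdot|\!|\!|_{0,h}$ norm of both sides of the identity above and use the triangle inequality: the first summand contributes $\mfg'_{\varepsilon}|v-w|_{1,h}$ directly, and the second is bounded by
$\tfrac12\mfg''_{\varepsilon}|\!|\!|\delta_hw|\!|\!|_{\infty,h}$ times
$\bigl(h\sum_{j=0}^{J}(|v_j-w_j|+|v_{j+1}-w_{j+1}|)^2\bigr)^{1/2}$.
A further triangle inequality, together with the obvious bound
$\bigl(h\sum_{j=0}^{J}|v_{j+1}-w_{j+1}|^2\bigr)^{1/2}\le\|v-w\|_{0,h}$ (and similarly for the other term), collapses the latter factor to at most $2\,\|v-w\|_{0,h}$, absorbing the $\tfrac12$ and producing exactly \eqref{BasicHot2}.

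The only mildly delicate point is the second splitting: one must pull the extra $|v-w|$ factor in $|A_j-B_j|$ out as an $\ell^2$ norm while keeping $\delta_hw$ in $\ell^\infty$, since mixing the two differently (say, $\ell^2$ for $\delta_hw$ and $\ell^\infty$ for $v-w$) would force the appearance of the unavailable quantity $|v-w|_{\infty,h}$ on the right-hand side. The integral representation of $A_j-B_j$ makes the convex-combination structure explicit and yields the clean bound above, which is exactly what is needed for the $L^2$ factor on $v-w$ to emerge.
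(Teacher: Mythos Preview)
Your proof is correct and follows essentially the same route as the paper: both use the integral form of the mean value theorem along the segment $[v_j,v_{j+1}]$ (respectively $[w_j,w_{j+1}]$) to write $\delta_h(\mfg(v)-\mfg(w))_j=A_j\,\delta_h(v-w)_j+(A_j-B_j)\,\delta_hw_j$, bound $|A_j-B_j|\le\tfrac12\mfg''_\varepsilon(|v_j-w_j|+|v_{j+1}-w_{j+1}|)$ via a second application of the mean value theorem to $\mfg'$, and then pass to the discrete $L^2$ norm. Your write-up is in fact slightly more detailed than the paper's, which stops at the pointwise bound and declares that it ``easily yields'' \eqref{BasicHot2}.
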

%
%
%
%
%
\begin{proof}
Let $v,w\in{\mathfrak R}_h^{\varepsilon}$. Then, for $s\in[0,1]$,
we define ${\mathfrak c}^s\in{\mathfrak R}_h^{\varepsilon}$
by ${\mathfrak c}^s:=s\,v+(1-s)\,w$
and ${\mathfrak a}^s$, ${\mathfrak b}^s\in\SSpacer$
by ${\mathfrak a}^s_j:=s\,v_{j+1}+(1-s)\,v_j$ and
${\mathfrak b}^s_j:=s\,w_{j+1}+(1-s)\,w_j$
for $j=0,\dots,J$.
Applying the mean value theorem, we have
$\mfg(v)-\mfg(w)=(v-w)\otimes\left(\int_0^1\mfg'({\mathfrak c}^s)\;ds\right)$,
which, easily, yields \eqref{BasicHot1}.
Applying, again, the mean value theorem, we conclude that
\begin{equation*}
\begin{split}
|\delta_h(\mfg(v)-\mfg(w))_j|=&\,\left|(\delta_h(v-w))_j
\,\left(\int_0^1\mfg'({\mathfrak a}^s_j)\;ds\right)
+(\delta_hw)_j\,\left(\int_0^1\left[\mfg'({\mathfrak a}^s_j)
-\mfg'({\mathfrak b}^s_j)\right]\;ds\right)\right|\\
\leq&\,\mfg''_{\varepsilon}\,
|(\delta_hw)_j|\,\int_0^1\left|\,s(v_{j+1}-w_{j+1})+(1-s)\,(v_j-w_j)\,\right|\;ds
+\mfg'_{\varepsilon}\,|(\delta_h(v-w))_j|\\
\leq&\,\tfrac{1}{2}\,\mfg''_{\varepsilon}\,|(\delta_hw)_j|\,
\left(|v_{j+1}-w_{j+1}|+|v_j-w_j|\right)
+\mfg'_{\varepsilon}\,|(\delta_h(v-w))_j|,\quad j=0,\dots,J,\\
\end{split}
\end{equation*}
which, easily, yields \eqref{BasicHot2}.
\end{proof}
%
%
%
%
\begin{lem}\label{LemmaX3}
Let $\varepsilon>0$,
${\mathfrak R}_h^{\varepsilon}:=\{v\in\rspace:\,\,|v|_{\infty,h}\leq\varepsilon\}$,
$\mfg\in C^3(\rset;\rset)$, 
$\mfg'_{\varepsilon}:=\sup_{\ssy |x|\in[0,\varepsilon]}|\mfg'(x)|$,
$\mfg''_{\varepsilon}:=\sup_{\ssy |x|\in[0,\varepsilon]}|\mfg''(x)|$,
$\mfg'''_{\varepsilon}:=\sup_{\ssy |x|\in[0,\varepsilon]}|\mfg'''(x)|$.
Then, for $v^a,v^b,z^a,z^b\in{\mathfrak R}_h^{\varepsilon}$, it holds that
\begin{equation}\label{BasicHot3}
\begin{split}
\|\mfg(v^a)-\mfg(v^b)-\mfg(z^a)+\mfg(z^b)\|_{0,h}
\leq&\,\mfg''_{\varepsilon}\,\,|z^a-z^b|_{\infty,h}\,\|v^b-z^b\|_{0,h}\\
&+\left(\,\mfg'_{\varepsilon}
+\mfg''_{\varepsilon}\,\,|z^a-z^b|_{\infty,h}\,\right)
\,\|v^a-v^b-z^a+z^b\|_{0,h}\\
\end{split}
\end{equation}
and
\begin{equation}\label{BasicHot4}
\begin{split}
|\mfg(v^a)-\mfg(v^b)-\mfg(z^a)+\mfg(z^b)|_{1,h}
\leq&\,(\mfg'_{\varepsilon}
+\mfg''_{\varepsilon}\,|z^a-z^b|_{\infty,h})
\,|v^a-v^b-z^a+z^b|_{1,h}\\
&\,+\tfrac{1}{2}\,\mfg''_{\varepsilon}
\,(|v^a|_{1,h}+|v^b|_{1,h})\,|v^a-v^b-z^a+z^b|_{\infty,h}\\
&\,+\mfg''_{\varepsilon}\,|z^a-z^b|_{\infty,h}\,|v^b-z^b|_{1,h}\\
&\,+{\mathcal F}_{\varepsilon}(z^a,z^b)\,
\,\left(\,\|v^a-v^b-z^a+z^b\|_{0,h}+\|v^b-z^b\|_{0,h}\,\right),\\
\end{split}
\end{equation}
where
\begin{equation*}
\begin{split}
{\mathcal F}_{\varepsilon}(z^a,z^b):=&\,
\mfg''_{\varepsilon}\,|\!|\!|\delta_h(z^a-z^b)|\!|\!|_{\infty,h}
+\mfg'''_{\varepsilon}\,|z^a-z^b|_{\infty,h}
\,\left[\,|\!|\!|\delta_h(z^a-z^b)|\!|\!|_{\infty,h}
+|\!|\!|\delta_hz^b|\!|\!|_{\infty,h}\,\right].\\
\end{split}
\end{equation*}
\end{lem}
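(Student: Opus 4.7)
The plan is to build on the integral identity $\mfg(v)-\mfg(w)=(v-w)\otimes\Phi(v,w)$ with $\Phi(v,w)_j:=\int_0^1\mfg'(w_j+t(v_j-w_j))\,dt$ already implicit in the proof of Lemma~\ref{LemmaX2}. Setting $\xi:=v^a-v^b$, $\eta:=z^a-z^b$, $\beta:=v^b-z^b$, $\Phi_v:=\Phi(v^a,v^b)$ and $\Phi_z:=\Phi(z^a,z^b)$, the key decomposition of $W:=\mfg(v^a)-\mfg(v^b)-\mfg(z^a)+\mfg(z^b)$ is
\begin{equation*}
W=(\xi-\eta)\otimes\Phi_v+\eta\otimes(\Phi_v-\Phi_z).
\end{equation*}
For \eqref{BasicHot3} I would bound the first term by $\mfg'_{\varepsilon}\|\xi-\eta\|_{0,h}$ using $|\Phi_v|_{\infty,h}\leq\mfg'_{\varepsilon}$, and for the second I would place the $\infty$-norm on $\eta$ and apply the mean value theorem once more to get $\Phi_{v,j}-\Phi_{z,j}=\int_0^1\int_0^1[\beta_j+t(\xi_j-\eta_j)]\,\mfg''(\theta_j(t,s))\,ds\,dt$, where $\theta_j(t,s)$ is a convex combination of the four nodal values; this gives $\|\Phi_v-\Phi_z\|_{0,h}\leq\mfg''_{\varepsilon}(\|\beta\|_{0,h}+\tfrac{1}{2}\|\xi-\eta\|_{0,h})$ and hence \eqref{BasicHot3} after regrouping.

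For \eqref{BasicHot4} I would apply the discrete product rule $\delta_h(a\otimes b)_j=a_{j+1}\delta_h b_j+(\delta_h a)_j\,b_j$ to each summand of the decomposition of $W$, producing four pieces of $\delta_h W_j$, namely $\Phi_{v,j+1}\delta_h(\xi-\eta)_j$, $(\xi-\eta)_j\delta_h\Phi_{v,j}$, $(\Phi_v-\Phi_z)_{j+1}\delta_h\eta_j$ and $\eta_j\delta_h(\Phi_v-\Phi_z)_j$. The first is immediately $\leq\mfg'_{\varepsilon}|\xi-\eta|_{1,h}$. Writing $\delta_h\Phi_{v,j}$ as an integral of $\mfg''$ evaluated at convex combinations of $v^b_j,v^b_{j+1},v^a_j,v^a_{j+1}$, the second yields $\tfrac{1}{2}\mfg''_{\varepsilon}|\xi-\eta|_{\infty,h}(|v^a|_{1,h}+|v^b|_{1,h})$. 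For the third, the pointwise bound $|(\Phi_v-\Phi_z)_{j+1}|\leq\mfg''_{\varepsilon}(|\beta_{j+1}|+|(\xi-\eta)_{j+1}|)$, together with the shift invariance $h\sum_{j=0}^{J}|w_{j+1}|^2\leq\|w\|_{0,h}^2$, gives $\mfg''_{\varepsilon}|\!|\!|\delta_h\eta|\!|\!|_{\infty,h}(\|\beta\|_{0,h}+\|\xi-\eta\|_{0,h})$, which accounts for the $\mfg''_{\varepsilon}$ part of $\mathcal{F}_\varepsilon$.

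The main obstacle is the fourth piece $\eta_j\delta_h(\Phi_v-\Phi_z)_j$. My plan is to expand $\delta_h\Phi_{v,j}=\int_0^1\int_0^1\mfg''(A_{j,t,s})[\delta_h v^b_j+t\delta_h\xi_j]\,ds\,dt$, where $A_{j,t,s}$ is a convex combination of $v^b_j,v^b_{j+1},v^a_j,v^a_{j+1}$, and similarly $\delta_h\Phi_{z,j}$ with argument $B_{j,t,s}$ formed from the $z$-values. Adding and subtracting the cross term $\int\!\int\mfg''(A_{j,t,s})[\delta_h z^b_j+t\delta_h\eta_j]\,ds\,dt$ splits $\delta_h(\Phi_v-\Phi_z)_j$ into a piece with integrand $\mfg''(A)[\delta_h\beta_j+t\delta_h(\xi-\eta)_j]$, which multiplied by $\eta_j$ contributes $\mfg''_{\varepsilon}|\eta|_{\infty,h}(|\beta|_{1,h}+|\xi-\eta|_{1,h})$, and a second piece with integrand $[\mfg''(A_{j,t,s})-\mfg''(B_{j,t,s})][\delta_h z^b_j+t\delta_h\eta_j]$. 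A further application of the mean value theorem rewrites $\mfg''(A)-\mfg''(B)=(A-B)\int_0^1\mfg'''(\cdot)\,dr$, with $|A-B|$ controlled pointwise by $(1-s)|\beta_j|+s|\beta_{j+1}|+t[(1-s)|(\xi-\eta)_j|+s|(\xi-\eta)_{j+1}|]$, producing exactly the $\mfg'''_{\varepsilon}|\eta|_{\infty,h}(|\!|\!|\delta_h z^b|\!|\!|_{\infty,h}+|\!|\!|\delta_h\eta|\!|\!|_{\infty,h})(\|\beta\|_{0,h}+\|\xi-\eta\|_{0,h})$ contribution encoded in $\mathcal{F}_\varepsilon$. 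The delicate point is the bookkeeping: the $L^\infty$ factors must fall only on $\delta_h z^b$ and $\delta_h\eta$ (the ones present in $\mathcal{F}_\varepsilon$), while the pointwise values of $\beta$ and $\xi-\eta$ must be kept in the discrete $L^2$ norm via shift invariance. Summing all four contributions yields \eqref{BasicHot4}.
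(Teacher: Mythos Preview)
Your proposal is correct and follows essentially the same route as the paper: the same decomposition $W=(\xi-\eta)\otimes\Phi_v+\eta\otimes(\Phi_v-\Phi_z)$ (the paper's ${\mathfrak L}^{\ssy A}+{\mathfrak L}^{\ssy B}$ with ${\sf f}^{\ssy A}=\Phi_v$, ${\sf f}^{\ssy B}=\Phi_v-\Phi_z$), the same product-rule splitting of $\delta_hW$ into four pieces, and the same mean-value bookkeeping. The only cosmetic difference is that the paper packages the bound on $|\Phi_v-\Phi_z|_{1,h}$ by invoking \eqref{BasicHot2} with $\mfg'$ in place of $\mfg$, whereas you unfold that step directly with an extra integral in $s$; the underlying computations are identical.
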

%
%
%
%
\begin{proof}
Let $v^a,v^b,z^a,z^b\in{\mathfrak R}_h^{\varepsilon}$. We simplify
the notation, first, by defining
${\mathfrak a}^s$, ${\mathfrak b}^s\in{\mathfrak R}_h^{\varepsilon}$ by
${\mathfrak a}^s:=s\,v^a+(1-s)\,v^b$ and ${\mathfrak b}^s:=s\,z^a+(1-s)\,z^b$
for $s\in[0,1]$, and then, by introducing ${\sf f}^{\ssy A},{\sf f}^{\ssy B}\in\rspace$
by ${\sf f}^{\ssy A}:=\int_0^1\mfg'({\mathfrak a}^s)\;ds$
and
${\sf f}^{\ssy B}:=\int_0^1\left[{\mathfrak g}'({\mathfrak a}^s)
-{\mathfrak g}'({\mathfrak b}^s)\right]\;ds$. Also, we set $e^a:=v^a-z^a$ and
$e^b:=v^b-z^b$.
%
%
\par\noindent\vskip0.3truecm\par\noindent
\boxed{\sf Part\,\,\, I.} First, we use the definition of
${\sf f}^{\ssy A}$ and the mean value theorem, to get
\begin{equation}\label{OPAP_2}
|{\sf f}^{\ssy A}|_{\infty,h}\leq\,\mfg'_{\varepsilon}
\end{equation}
and
\begin{equation*}
\begin{split}
\left|(\delta_h{\sf f}^{\ssy A})_j\right|\leq&\,\tfrac{1}{h}\,\int_0^1|\mfg'({\mathfrak a}^s_{j+1})
-\mfg'({\mathfrak a}^s_j)|\;ds\\
\leq&\,\mfg''_{\varepsilon}\,
\int_0^1\left|s\,\delta_hv^a_j+(1-s)\,\delta_hv^b_j\,\right|\;ds\\
\leq&\,\tfrac{1}{2}\,\mfg''_{\varepsilon}
\,\left(\,|\delta_hv^a_j|+|\delta_hv^b_j|\,\right),
\quad j=0,\dots,J,\\
\end{split}
\end{equation*}
which, obviously, yields
\begin{equation}\label{OPAP_3}
|{\sf f}^{\ssy A}|_{1,h}\leq\,\tfrac{1}{2}\,\mfg''_{\varepsilon}
\,\left(|v^a|_{1,h}+|v^b|_{1,h}\right).
\end{equation}
Next, we use the definition of ${\sf f}^{\ssy B}$ and the mean value theorem,
to obtain
\begin{equation*}
\begin{split}
|{\sf f}^{\ssy B}_j|\leq&\,\mfg''_{\varepsilon}\,\,\int_0^1|{\mathfrak a}_j^s
-{\mathfrak b}_j^s|\;ds\\
\leq&\,\mfg''_{\varepsilon}\,\,
\int_0^1|s\,(v^a-v^b-z^a+z^b)_j
+(v^b-z^b)_j|\;ds\\
\leq&\,\mfg''_{\varepsilon}\,\,\left(\,|(v^a-v^b-z^a+z^b)_j|
+|(v^b-z^b)_j|\,\right),\quad j=0,\dots,J+1,\\
\end{split}
\end{equation*}
which, leads to
\begin{equation}\label{OPAP_4}
\|{\sf f}^{\ssy B}\|_{0,h}\leq\,\mfg''_{\varepsilon}\,
\left(\,\|e^a-e^b\|_{0,h}+\|e^b\|_{0,h}\,\right).
\end{equation}
Also, for $s\in[0,1]$, we apply \eqref{BasicHot2} to arrive at
\begin{equation*}\label{OPAP_5}
\begin{split}
|{\mathfrak g}'({\mathfrak a}^s)-{\mathfrak g}'({\mathfrak b}^s)|_{1,h}
\leq&\,\mfg''_{\varepsilon}\,|{\mathfrak a}^s-{\mathfrak b}^s|_{1,h}
+{\mathfrak g}'''_{\varepsilon}
\,|\!|\!|\delta_h{\mathfrak b}^s|\!|\!|_{\infty,h}\,\|{\mathfrak a}^s-{\mathfrak b}^s\|_{0,h}\\
\leq&\,\mfg''_{\varepsilon}\,\,\left(s\,\,|e^a-e^b|_{1,h}+\,|e^b|_{1,h}\,\right)\\
&\quad+{\mathfrak g}'''_{\varepsilon}
\,(s\,|\!|\!|\delta_h(z^a-z^b)|\!|\!|_{\infty,h}
+\,|\!|\!|\delta_hz^b|\!|\!|_{\infty,h})\,\big]
\,\left(s\,\,\|e^a-e^b\|_{0,h}+\,\|e^b\|_{0,h}\,\right),\\
\end{split}
\end{equation*}
which we use to obtain
\begin{equation}\label{OPAP_6}
\begin{split}
|{\sf f}^{\ssy B}|_{1,h}\leq&\int_0^1|{\mathfrak g}'({\mathfrak a}^s)
-{\mathfrak g}'({\mathfrak b}^s)|_{1,h}\;ds\\
\leq&\,\mfg''_{\varepsilon}\,(|e^a-e^b|_{1,h}+|e^b|_{1,h})\\
&\quad+{\mathfrak g}'''_{\varepsilon}
\,\left(|\!|\!|\delta_h(z^a-z^b)|\!|\!|_{\infty,h}
+|\!|\!|\delta_hz^b|\!|\!|_{\infty,h}\right)
\,\left(\,\|e^a-e^b\|_{0,h}+\|e^b\|_{0,h}\,\right).\\
\end{split}
\end{equation}
%
%
%
\par\noindent\vskip0.3truecm\par\noindent
\boxed{\sf Part\,\,\, II.} Using the mean value theorem, we obtain
\begin{equation}\label{OPAP_7}
{\mathfrak g}(v^a)-{\mathfrak g}(v^b)-{\mathfrak g}(z^a)+{\mathfrak g}(z^b)
={\mathfrak L}^{\ssy A}+{\mathfrak L}^{\ssy B},
\end{equation}
where ${\mathfrak L}^{\ssy A}$, ${\mathfrak L}^{\ssy B}\in\rspace$ are defined by
${\mathfrak L}^{\ssy A}:=(v^a-v^b-z^a+z^b)\otimes{\sf f}^{\ssy A}$ and
${\mathfrak L}^{\ssy B}:=(z^a-z^b)\otimes{\sf f}^{\ssy B}$. Thus, using
\eqref{OPAP_2} and \eqref{OPAP_4}, we have
\begin{equation}\label{OPAP_8}\
\begin{split}
\|{\mathfrak L}^{\ssy A}\|_{0,h}\leq&\,\mfg'_{\varepsilon}\,\|e^a-e^b\|_{0,h},\\
\|{\mathfrak L}^{\ssy B}\|_{0,h}\leq&\,\mfg''_{\varepsilon}
\,|z^a-z^b|_{\infty,h}\,\left(\|e^a-e^b\|_{0,h}+\|e^b\|_{0,h}\right).\\
\end{split}
\end{equation}
Thus, \eqref{BasicHot3} follows, easily, from \eqref{OPAP_7} and \eqref{OPAP_8}.
%
%
%
%
\par\noindent\vskip0.3truecm\par\noindent
\boxed{\sf Part\,\,\, III.} Observing that
\begin{equation*}
\begin{split}
(\delta_h{\mathfrak L}^{\ssy A})_{j}=&
\,{\sf f}^{\ssy A}_{j+1}\,\delta_h(v^a-v^b-z^a+z^b)_j
+(\delta_h{\sf f}^{\ssy A})_j\,(v^a-v^b-z^a+z^b)_j,\\
(\delta_h{\mathfrak L}^{\ssy B})_{j}=&\,{\sf f}^{\ssy B}_{j+1}\,\delta_h(z^a-z^b)_j
+(\delta_h{\sf f}^{\ssy B})_j\,(z^a-z^b)_j\\
\end{split}
\end{equation*}
for $j=0,\dots,J$, we obtain
\begin{equation}\label{OPAP_9}
\begin{split}
|{\mathfrak L}^{\ssy A}|_{1,h}\leq&\,|{\sf f}^{\ssy A}|_{\infty,h}\,|e^a-e^b|_{1,h}
+|{\sf f}^{\ssy A}|_{1,h}\,|e^a-e^b|_{\infty,h},\\
|{\mathfrak L}^{\ssy B}|_{1,h}\leq&\,|\!|\!|\delta_h(z^a-z^b)|\!|\!|_{\infty,h}
\,\|{\sf f}^{\ssy B}\|_{0,h}
+|z^a-z^b|_{\infty,h}\,|{\sf f}^{\ssy B}|_{1,h}.\\
\end{split}
\end{equation}
Using \eqref{OPAP_9}, \eqref{OPAP_2} and \eqref{OPAP_3}, we have
\begin{equation}\label{OPAP_10}
\begin{split}
|{\mathfrak L}^{\ssy A}|_{1,h}\leq&\,
{\mathfrak g}'_{\varepsilon}\,|e^a-e^b|_{1,h}
+\tfrac{1}{2}\,{\mathfrak g}''_{\varepsilon}
\,(\,|v^a|_{1,h}+|v^b|_{1,h}\,)\,|e^a-e^b|_{\infty,h}.
\end{split}
\end{equation}
Combining \eqref{OPAP_9}, \eqref{OPAP_4} and \eqref{OPAP_6}, we arrive at
\begin{equation}\label{OPAP_11}
\begin{split}
|{\mathfrak L}^{\ssy B}|_{1,h}\leq&\,\mfg''_{\varepsilon}
\,|\!|\!|\delta_h(z^a-z^b)|\!|\!|_{\infty,h}\,
\left(\,\|e^a-e^b\|_{0,h}+\|e^b\|_{0,h}\,\right)\\
&\,+|z^a-z^b|_{\infty,h}\,{\mathfrak g}'''_{\varepsilon}
\,\left[|\!|\!|\delta_h(z^a-z^b)|\!|\!|_{\infty,h}
+|\!|\!|\delta_hz^b|\!|\!|_{\infty,h}\right]
\,\left(\,\|e^a-e^b\|_{0,h}+\|e^b\|_{0,h}\,\right)\\
&\,+|z^a-z^b|_{\infty,h}\,\mfg''_{\varepsilon}
\,\left(\,|e^a-e^b|_{1,h}+|e^b|_{1,h}\,\right).\\
\end{split}
\end{equation}
Finally, \eqref{BasicHot4} follows, easily, in view of \eqref{OPAP_7}, \eqref{OPAP_10}
and \eqref{OPAP_11}.
\end{proof}
%
%
%
%
\begin{lem}\label{LemmaX4}
%
For $v^a,v^b,z^a,z^b\in\gspace$, it holds that
\begin{equation}\label{Jalapeno1}
\big\||v^a|^2-|z^a|^2\big\|_{0,h}
\leq\,(|v^a|_{\infty,h}+|z^a|_{\infty,h})\,\|v^a-z^a\|_{0,h},
\end{equation}
\begin{equation}\label{Jalapeno12}
\begin{split}
\left|\,|v^a|^2-|z^a|^2\,\right|_{1,h}
\leq&\,2\,|v^a|_{\infty,h}\,|v^a-z^a|_{1,h}
+2\,|\!|\!|\delta_h(z^a)|\!|\!|_{\infty,h}\,\|v^a-z^a\|_{0,h},\\
\end{split}
\end{equation}
\begin{equation}\label{Jalapeno2}
\begin{split}
\big\|\,|v^a|^2-|v^b|^2-|z^a|^2+|z^b|^2\,\big\|_{0,h}
\leq&\,2\,|z^a-z^b|_{\infty,h}\,\|v^b-z^b\|_{0,h}\\
&+(|v^a|_{\infty,h}+|v^b|_{\infty,h}+|z^a-z^b|_{\infty,h})
\,\|v^a-v^b-z^a+z^b\|_{0,h},\\
\end{split}
\end{equation}
and
\begin{equation}\label{Jalapeno21}
\begin{split}
\big||v^a|^2-|v^b|^2-|z^a|^2+|z^b|^2\big|_{1,h}
\leq&\,G^{\ssy A}(v^a,v^b,z^a,z^b)\,|v^a-v^b-z^a-z^b|_{1,h}\\
&\,+G^{\ssy B}(v^a,v^b,z^a,z^b)
\,|v^a-v^b-z^a-z^b|_{\infty,h}\\
&\,+2\,|z^a-z^b|_{\infty,h}\,|v^b-z^b|_{1,h}
+2\,|\!|\!|\delta_h(z^a-z^b)|\!|\!|_{\infty,h}\,\|v^b-z^b\|_{0,h}\\
\end{split}
\end{equation}
where
\begin{equation*}
\begin{split}
G^{\ssy A}(v^a,v^b,z^a,z^b):=&\,|v^a|_{\infty,h}+|v^b|_{\infty,h}+|z^a-z^b|_{\infty,h},\\
G^{\ssy B}(v^a,v^b,z^a,z^b):=&\,|v^a|_{1,h}
+|v^b|_{1,h}+|z^a-z^b|_{1,h}.\\
\end{split}
\end{equation*}
\end{lem}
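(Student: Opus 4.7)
The plan is to reduce each of the four estimates to a single algebraic identity for $|v|^2 = v\overline{v}$, combined with the discrete Leibniz rule $\delta_h(A \otimes B)_j = (\delta_h A)_j B_{j+1} + A_j (\delta_h B)_j$. The base identity, which avoids any reference to $|\cdot|$ after the expansion and hence applies directly to complex-valued grid functions, is
\begin{equation*}
|v|^2 - |z|^2 \;=\; v \otimes \overline{(v-z)} \;+\; (v-z) \otimes \overline{z}, \qquad v,z \in \gspace.
\end{equation*}

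For \eqref{Jalapeno1}, apply this identity with $v=v^a$, $z=z^a$, estimate pointwise by $(|v^a_j| + |z^a_j|)\,|v^a_j - z^a_j|$, and sum. For \eqref{Jalapeno12}, apply $\delta_h$ to the same identity via the Leibniz rule, producing four terms. The subtle point is that the naive estimate would yield the coefficient $|v^a|_{\infty,h} + |z^a|_{\infty,h}$ in front of $|v^a - z^a|_{1,h}$, rather than the required $2|v^a|_{\infty,h}$. To fix this, set $e^a := v^a - z^a$, write $z^a_{j+1} = v^a_{j+1} - e^a_{j+1}$ and $(\delta_h v^a)_j = (\delta_h z^a)_j + (\delta_h e^a)_j$, and regroup; the two $(\delta_h e^a)_j$ contributions then pair with $v^a_j$ and $v^a_{j+1}$ respectively, whereas the remaining two carry a $(\delta_h z^a)_j$ factor. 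An application of $(a+b)^2 \leq 2(a^2+b^2)$ combined with an index shift furnishes the factor $2$ in front of $|\!|\!|\delta_h z^a|\!|\!|_{\infty,h}\|v^a - z^a\|_{0,h}$.

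For \eqref{Jalapeno2} and \eqref{Jalapeno21}, introduce $d^a := v^a - v^b - z^a + z^b$, $d^a_z := z^a - z^b$, and $e^b := v^b - z^b$. Apply the base identity once to $(v^a, z^a)$ and once to $(v^b, z^b)$, subtract, then use $e^a = e^b + d^a$ and $v^a - v^b = d^a + d^a_z$ to regroup. A careful bookkeeping produces the five-term decomposition
\begin{equation*}
|v^a|^2 - |v^b|^2 - |z^a|^2 + |z^b|^2 \;=\; v^a \otimes \overline{d^a} \,+\, d^a \otimes \overline{v^b} \,+\, d^a_z \otimes \overline{e^b} \,+\, e^b \otimes \overline{d^a_z} \,+\, d^a \otimes \overline{d^a_z}.
\end{equation*}
Taking $\|\cdot\|_{0,h}$ with triangle inequality and pointwise $L^\infty$ bounds on one factor of each product gives \eqref{Jalapeno2} directly. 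For \eqref{Jalapeno21}, apply $\delta_h$ to each of the five products via the Leibniz rule, producing ten scalar contributions; gathering them according to whether they carry $|d^a|_{1,h}$, $|d^a|_{\infty,h}$, $|e^b|_{1,h}$, or $\|e^b\|_{0,h}$ reproduces exactly the four groups on the right-hand side of \eqref{Jalapeno21}.

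The main obstacle is locating the correct five-term decomposition: the identity must be organized so that every product factors into one \emph{large} piece (bounded in $|\cdot|_{\infty,h}$ or $|\cdot|_{1,h}$, and thus contributing to $G^A$ or $G^B$) and one \emph{small} piece (one of $d^a$, $e^b$, or $d^a_z$). Otherwise the resulting estimate picks up spurious terms --- such as $|z^a|_{\infty,h}$ appearing in place of $|v^a|_{\infty,h}$, or a cross-product of the form $|v^a - v^b|_{\infty,h}\,\|e^b\|_{0,h}$ --- that do not appear in the statement.
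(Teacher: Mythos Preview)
Your proposal is correct and follows essentially the same route as the paper: polarization-type identities for $|v|^2-|z|^2$ combined with the discrete Leibniz rule for $\delta_h$. The only cosmetic difference is that the paper packages each identity as the real part of a single expression (e.g.\ $|v^a|^2-|z^a|^2=\Ree[\overline{(v^a+z^a)}(v^a-z^a)]$ and the three-term analogue for the four-fold difference), whereas you carry the complex products separately; after taking real parts your five-term decomposition collapses exactly to the paper's three-term one, and the $\delta_h$-identities you obtain after your regrouping coincide with the ones the paper writes down directly.
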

%
%
%
\begin{proof}
Let  $v^a,v^b,z^a,z^b\in\gspace$ and $\zeta:=v^a-v^b-z^a+z^b$.
The inequalities \eqref{Jalapeno1},
\eqref{Jalapeno12} and \eqref{Jalapeno2} follow easily
by observing that
\begin{equation*}
\begin{split}
(|v^a|^2-|z^a|^2)_j
=&\,\Ree\left[\,{\overline{(v^a+z^a)_j}}\,(v^a-z^a)_j\,\right],\\
(|v^a|^2-|v^b|^2-|z^a|^2+|z^b|^2)_j=&\,\Ree\left[\,
{\overline{(v^a+v^b)_j}}\,\zeta_j
+{\overline{(z^a-z^b)_j}}\,\zeta_j
+2\,{\overline{(z^a-z^b)_j}}\,(v^b-z^b)_j\,\right]\\
\end{split}
\end{equation*}
for $j=0,\dots,J+1$, and
\begin{equation*}
\begin{split}
\delta_h(|v^a|^2-|z^a|^2)_j=&\,\Ree\left[\,{\overline{(v^a_{j+1}+v^a_j)}}
\,\delta_h(v^a-z^a)_{j}\right.\\
&\hskip1.5truecm\left.+{\overline{\delta_h(z^a)_j}}
\,[(v^a-z^a)_{j+1}+(v^a-z^a)_j]\,\right],\\
\delta_h(|v^a|^2-|v^b|^2-|z^a|^2+|z^b|^2)_j=&\,\Ree\left[\,
{\overline{(v^a+v^b)_{j+1}}}\,
\delta_h(\zeta)_j+{\overline{\delta_h(v^a+v^b)_{j}}}\,
\zeta_j\,\right]\\
&\,+\Ree\left[\,
{\overline{(z^a-z^b)_{j+1}}}\,
\delta_h(\zeta)_j+{\overline{\delta_h(z^a-z^b)_{j}}}\,
\zeta_j\,\right]\\
&+2\,\Ree\left[\,
{\overline{(z^a-z^b)_{j+1}}}\,\delta_h(v^b-z^b)_j
+{\overline{\delta_h(z^a-z^b)_{j}}}\,(v^b-z^b)_j\,\right]\\
\end{split}
\end{equation*}
for $j=0,\dots,J$.
\end{proof}
%
%
\subsection{A molifier}
Let $\delta>0$, $p_{\delta}:[\delta,2\delta]\mapsto{\mathbb R}$ be the unique polynomial
of ${\mathbb P}^7[\delta,2\delta]$ satisfying
\begin{equation*}
p_{\delta}(\delta)=\delta,\,\,p'_{\delta}(\delta)=1,\,\,p''_{\delta}(\delta)=p'''_{\delta}(\delta)=0,
\,\,p_{\delta}(2\delta)=2\delta,\,\,p'_{\delta}(2\delta)=p''_{\delta}(2\delta)=p'''_{\delta}(2\delta)=0,
\end{equation*}
and $\gff_{\delta}\in{\sf C}_b^3(\rset;\rset)$ be an odd function 
(cf. \cite{KarMak}, \cite{Georgios1}) defined by
\begin{equation}\label{MLF_1}
\mol_{\delta}(x):=\left\{
\begin{aligned}
&x,\hskip2.40truecm
\mbox{if}\ \ x\in[0,\delta],\\
&p_{\ssy\delta}(x),\hskip1.76truecm
\mbox{if}\ \ x\in (\delta,2\delta],\\
&2\,\delta,\hskip2.22truecm\mbox{if}\ \ x> 2\delta,\\
\end{aligned}
\right.\quad\forall\,x\ge 0.
\end{equation}
%
%
%
Then, we define a complex molifier
$\gamma_{\delta}:\cset\mapsto\cset$ (cf. \cite{Georgios1}) by
\begin{equation}\label{MLF_2}
\gamma_{\delta}(z):=\gff_{\delta}(\Ree(z))+\iii\,\gff_{\delta}(\Imm(z))
\quad\forall\,z\in\cset,
\end{equation}
for which it is, easily, verified that
\begin{equation}\label{MLF_4}
|\gamma_{\delta}(z)|\leq\,\sqrt{2}\,\sup_{\ssy\rset}|\gff_{\delta}|\quad\forall\,z\in\cset,
\end{equation}
\begin{equation}\label{MLF_5}
|\gamma_{\delta}(z_1)-\gamma_{\delta}(z_2)|
\leq\,\sup_{\ssy\rset}|\gff_{\delta}'|\,|z_1-z_2|\quad\forall\,z_1,z_2\in\cset
\end{equation}
and
\begin{equation}\label{MLF_3}
\gamma_{\delta}(z)=z\quad\forall\,z\in\left\{z\in{\mathbb C}:\,\,|z|\leq\delta\right\}.
\end{equation}
%
%
%
%
\begin{lem}\label{LemmaX5}
Let $\delta>0$,
$\mol'_{\delta,\infty}:=\sup_{\ssy\rset}|\mol'_{\delta}|$,
$\mol''_{\delta,\infty}:=\sup_{\ssy\rset}|\mol''_{\delta}|$
and
$\mol'''_{\delta,\infty}:=\sup_{\ssy\rset}|\mol'''_{\delta}|$.
Then, for all $v^a,v^b,z^a,z^b\in\gspace$, it holds that
\begin{equation}\label{BasicHot5}
\|\gamma_{\delta}(v^a)-\gamma_{\delta}(v^b)\|_{0,h}
\leq\,\mol'_{\delta,\infty}\,\|v^a-v^b\|_{0,h},
\end{equation}
\begin{equation}\label{BasicHot5b}
|\gamma_{\delta}(v^a)-\gamma_{\delta}(v^b)|_{1,h}\leq
2\,\mol'_{\delta,\infty}\,|v^a-v^b|_{1,h}
+2\,\mol''_{\delta,\infty}
\,|\!|\!|\delta_hv^b|\!|\!|_{\infty,h}\,\|v^a-v^b\|_{0,h},
\end{equation}
\begin{equation}\label{BasicHot6}
\begin{split}
\|\zeta_{\delta}\|_{0,h}
\leq&\,2\,\mol''_{\delta,\infty}\,\,|z^a-z^b|_{\infty,h}\,\|v^b-z^b\|_{0,h}
+2\,\left(\mol'_{\delta,\infty}
+\mol''_{\delta,\infty}\,\,|z^a-z^b|_{\infty,h}\right)
\,\|v^a-v^b-z^a+z^b\|_{0,h}\\
\end{split}
\end{equation}
and
\begin{equation}\label{BasicHot7}
\begin{split}
|\zeta_{\delta}|_{1,h}
\leq&\,2\,(\mol'_{\delta,\infty}
+\mol''_{\delta,\infty}\,|z^a-z^b|_{\infty,h})
|v^a-v^b-z^a+z^b|_{1,h}\\
&\,+\mol''_{\delta,\infty}
\,(|v^a|_{1,h}+|v^b|_{1,h})\,|v^a-v^b-z^a+z^b|_{\infty,h}\\
&\,+2\,\mol''_{\delta,\infty}\,|z^a-z^b|_{\infty,h}\,|v^b-z^b|_{1,h}\\
&+2\,{\mathcal F}_{\delta}^{\star}(z^a,z^b)\,
\,\left(\,\|v^a-v^b-z^a+z^b\|_{0,h}+\|v^b-z^b\|_{0,h}\,\right),\\
\end{split}
\end{equation}
where $\zeta_{\delta}:=\gamma_{\delta}(v^{a})-\gamma_{\delta}(v^{b})
-\gamma_{\delta}(z^{a})+\gamma_{\delta}(z^{b})\in\gspace$ and
\begin{equation*}
\begin{split}
{\mathcal F}_{\delta}^{\star}(z^a,z^b):=&\,
\mol''_{\delta,\infty}\,|\!|\!|\delta_h(z^a-z^b)|\!|\!|_{\infty,h}
+\mol'''_{\delta,\infty}\,|z^a-z^b|_{\infty,h}
\,\left(|\!|\!|\delta_h(z^a-z^b)|\!|\!|_{\infty,h}
+|\!|\!|\delta_hz^b|\!|\!|_{\infty,h}\right).\\
\end{split}
\end{equation*}
\end{lem}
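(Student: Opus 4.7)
The plan is to reduce every inequality to the corresponding real-valued statement in Lemma~\ref{LemmaX2} or Lemma~\ref{LemmaX3} by exploiting the componentwise definition $\gamma_{\delta}(v)=\mol_{\delta}(\Ree v)+\iii\,\mol_{\delta}(\Imm v)$ together with the discrete Pythagorean identities
\begin{equation*}
\|w\|_{0,h}^{2}=\|\Ree w\|_{0,h}^{2}+\|\Imm w\|_{0,h}^{2},\qquad
|w|_{1,h}^{2}=|\Ree w|_{1,h}^{2}+|\Imm w|_{1,h}^{2}\qquad\forall\,w\in\gspace,
\end{equation*}
and the elementary bounds $\|\Ree w\|_{0,h}\le\|w\|_{0,h}$, $|\Ree w|_{1,h}\le|w|_{1,h}$, $|\Ree w|_{\infty,h}\le|w|_{\infty,h}$, $|\!|\!|\delta_{h}\Ree w|\!|\!|_{\infty,h}\le|\!|\!|\delta_{h}w|\!|\!|_{\infty,h}$ (and identically with $\Imm$ replacing $\Ree$). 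Because $\mol_{\delta}$ has all derivatives globally bounded on $\rset$, the proofs of Lemmas~\ref{LemmaX2} and~\ref{LemmaX3} go through verbatim once $\mfg'_{\varepsilon}$, $\mfg''_{\varepsilon}$, $\mfg'''_{\varepsilon}$ are replaced by $\mol'_{\delta,\infty}$, $\mol''_{\delta,\infty}$, $\mol'''_{\delta,\infty}$ and the localization $\mathfrak R_{h}^{\varepsilon}$ is dropped; in particular these lemmas apply to the real- and imaginary-part pairs of the arguments appearing in the statement.

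For~\eqref{BasicHot5} I apply~\eqref{BasicHot1} to each of $(\Ree v^{a},\Ree v^{b})$ and $(\Imm v^{a},\Imm v^{b})$, square, add, and invoke the $\|\cdot\|_{0,h}$ Pythagorean identity; this yields the claim immediately with constant $\mol'_{\delta,\infty}$ and no extra factor. For~\eqref{BasicHot5b} the analogous use of~\eqref{BasicHot2} gives a bound of the form $|\gamma_{\delta}(v^{a})-\gamma_{\delta}(v^{b})|_{1,h}^{2}\le[A_{\Ree}+B_{\Ree}]^{2}+[A_{\Imm}+B_{\Imm}]^{2}$, where $A_{\star}$, $B_{\star}$ are the corresponding real/imaginary contributions; expanding via $(a+b)^{2}\le 2a^{2}+2b^{2}$, summing, and applying the Pythagorean identity together with $\sqrt{A^{2}+B^{2}}\le A+B$ produces~\eqref{BasicHot5b} (the factor $2$ being a harmless overestimate of $\sqrt{2}$).

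The four-point estimates~\eqref{BasicHot6} and~\eqref{BasicHot7} follow by exactly the same recipe, now invoking~\eqref{BasicHot3} and~\eqref{BasicHot4} of Lemma~\ref{LemmaX3} on the real and imaginary parts of the quadruple $(v^{a},v^{b},z^{a},z^{b})$; indeed, the function $\mathcal F_{\delta}^{\star}(z^{a},z^{b})$ introduced in the statement is precisely the global-$\varepsilon$ analogue of $\mathcal F_{\varepsilon}(z^{a},z^{b})$ from Lemma~\ref{LemmaX3}. Each term on the right-hand side of~\eqref{BasicHot3} and~\eqref{BasicHot4} is monotone in the relevant real-valued norms, so replacing $\Ree$ or $\Imm$ of each quantity by its complex counterpart only weakens the bounds; squaring the resulting termwise estimates, summing over real and imaginary parts, and applying the Pythagorean identity together with $\sqrt{A^{2}+B^{2}}\le A+B$ delivers~\eqref{BasicHot6} and~\eqref{BasicHot7} in the announced form. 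The main obstacle is purely organizational: equation~\eqref{BasicHot7} contains five structurally distinct term-types, and one must track carefully which factor of $2$ arises from the real/imaginary recombination and which was already present in Lemma~\ref{LemmaX3}, so that the stated multiplicative constants emerge without loss and no cross-term is misplaced.
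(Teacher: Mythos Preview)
Your approach is correct and follows essentially the same strategy as the paper: split $\gamma_{\delta}$ into real and imaginary parts via \eqref{MLF_2}, apply Lemmas~\ref{LemmaX2} and~\ref{LemmaX3} with $\mfg=\mol_{\delta}$ (and $\varepsilon=+\infty$, since $\mol_{\delta}\in C_b^3$), and then recombine. The only difference is in the recombination step: the paper bypasses the Pythagorean--square--$(a+b)^2\le 2a^2+2b^2$ mechanism entirely and instead uses the simple triangle inequality $\|w\|_{\star}\le\|\Ree w\|_{\star}+\|\Imm w\|_{\star}$ (valid for $\|\cdot\|_{\star}=\|\cdot\|_{0,h}$ or $|\cdot|_{1,h}$, since $\sqrt{a^{2}+b^{2}}\le a+b$), applies the real-valued lemmas to each summand, and then bounds every $\Ree$- or $\Imm$-norm by its complex counterpart; the factor $2$ then arises directly from the two summands rather than via $\sqrt{2}\le 2$. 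This avoids the bookkeeping you flag in your last paragraph and makes the tracking of the constants in \eqref{BasicHot7} essentially automatic.
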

%
%
%
%
\begin{proof}
First, we observe that \eqref{BasicHot5} follows, easily, from
\eqref{MLF_5}. Now, let $\|\cdot\|_{\star}=\|\cdot\|_{0,h}$ or $|\cdot|_{1,h}$.
Then, using \eqref{MLF_2}, we have 
\begin{equation*}
\|\gamma_{\delta}(v^a)-\gamma_{\delta}(v^b)\|_{\star}\leq
\|\mol_{\delta}(\Ree(v^a))-\mol_{\delta}(\Ree(v^b))\|_{\star}
+\|\mol_{\delta}(\Imm(v^a))-\mol_{\delta}(\Imm(v^b))\|_{\star}
\end{equation*}
and
\begin{equation*}
\begin{split}
\|\zeta_{\delta}\|_{\star}\leq&\,\|\mol_{\delta}(\Ree(v^a))-\mol_{\delta}(\Ree(v^b))
-\mol_{\delta}(\Ree(z^a))+\mol_{\delta}(\Ree(z^b))\|_{\star}\\
&\,+\|\mol_{\delta}(\Imm(v^a))-\mol_{\delta}(\Imm(v^b))
-\mol_{\delta}(\Imm(z^a))+\mol_{\delta}(\Imm(z^b))\|_{\star},\\
\end{split}
\end{equation*}
which, along \eqref{BasicHot2}, \eqref{BasicHot3} and \eqref{BasicHot4}
(with ${\mathfrak g}=\mol_{\delta}$), 
yield \eqref{BasicHot5b}, \eqref{BasicHot6} and \eqref{BasicHot7}.
\end{proof}
%
%
\subsection{Space discrete operators}
Let $I_h:\dspace\mapsto\dspace$ be the identity operator and
$A_h,T_h:\dspace\mapsto\dspace$ be linear operators
defined by
$A_h:=I_h-\iii\,\tfrac{\tau}{2}\,\Delta_h$,
$T_h:=I_h+\iii\,\tfrac{\tau}{2}\,\Delta_h$
and $B_h:=A_h^{-1}T_h$.
%
%
%
\begin{lem}\label{operator_lemma}
The operators $A_h$ and $T_h$ are invertible and
the following relations hold
\begin{equation}\label{Megatree2}
\|A_h^{-1}(\chi)\|_{0,h}\leq\|\chi\|_{0,h},
\end{equation}
\begin{equation}\label{Megatree1}
\|B_h(\chi)\|_{0,h}=\|\chi\|_{0,h},
\end{equation}
\begin{equation}\label{Megatree5}
|B_h(\chi)|_{1,h}=|\chi|_{1,h}
\end{equation}
for $\chi\in\dspace$, and
%
%
\begin{equation}\label{Megatree4}
(I_h+B_h)^{-1}A_h^{-1}=\tfrac{1}{2}\,I_h.
\end{equation}
\end{lem}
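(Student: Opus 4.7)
The plan is to use that $i\tfrac{\tau}{2}\Delta_h$ is skew-Hermitian with respect to $(\cdot,\cdot)_{0,h}$ (which is exactly what \lemref{LemmaX1} provides) together with the algebraic identity $A_h+T_h=2I_h$. Since $\dspace$ is finite-dimensional, invertibility will reduce to injectivity.

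First I would show that $A_h$ and $T_h$ are invertible. Suppose $A_h v=0$ for some $v\in\dspace$. Then $v=\iii\,\tfrac{\tau}{2}\,\Delta_h v$; taking the inner product with $v$ and using \eqref{NewEra2} yields $\|v\|_{0,h}^2=\iii\,\tfrac{\tau}{2}(\Delta_h v,v)_{0,h}=-\iii\,\tfrac{\tau}{2}|v|_{1,h}^2$. The left side is real, the right side purely imaginary, so both vanish and $v=0$. The argument for $T_h$ is identical.

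Next I would prove \eqref{Megatree2}. For $\chi\in\dspace$, let $w:=A_h^{-1}\chi$, so $w-\iii\,\tfrac{\tau}{2}\,\Delta_h w=\chi$. Testing with $w$ and using \eqref{NewEra2} gives $\|w\|_{0,h}^2+\iii\,\tfrac{\tau}{2}|w|_{1,h}^2=(\chi,w)_{0,h}$. Taking the real part and applying Cauchy--Schwarz, $\|w\|_{0,h}^2=\Ree(\chi,w)_{0,h}\le\|\chi\|_{0,h}\|w\|_{0,h}$, which yields \eqref{Megatree2}.

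For \eqref{Megatree1} and \eqref{Megatree5}, set $\eta:=B_h\chi$, so $A_h\eta=T_h\chi$ rearranges to the key relation
\begin{equation*}
\eta-\chi=\iii\,\tfrac{\tau}{2}\,\Delta_h(\eta+\chi).
\end{equation*}
Testing with $\eta+\chi$, the right-hand side becomes $-\iii\,\tfrac{\tau}{2}|\eta+\chi|_{1,h}^2$ (purely imaginary) while the left side, when expanded, has real part $\|\eta\|_{0,h}^2-\|\chi\|_{0,h}^2$; equating real parts gives \eqref{Megatree1}. For \eqref{Megatree5}, I would instead test with $\Delta_h(\eta+\chi)$: the right-hand side becomes $\iii\,\tfrac{\tau}{2}\|\Delta_h(\eta+\chi)\|_{0,h}^2$, purely imaginary, and the left side becomes $-(\!(\delta_h(\eta-\chi),\delta_h(\eta+\chi))\!)_{0,h}$ by \eqref{NewEra1}, whose real part is $-(|\eta|_{1,h}^2-|\chi|_{1,h}^2)$; the identity \eqref{Megatree5} follows.

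Finally, for \eqref{Megatree4} the crucial observation is $A_h+T_h=2I_h$. Since $A_hB_h=A_hA_h^{-1}T_h=T_h$, we get
\begin{equation*}
A_h(I_h+B_h)=A_h+T_h=2I_h,
\end{equation*}
so $I_h+B_h=2A_h^{-1}$ is invertible with inverse $\tfrac{1}{2}A_h$, whence $(I_h+B_h)^{-1}A_h^{-1}=\tfrac{1}{2}A_hA_h^{-1}=\tfrac{1}{2}I_h$.

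There is no real obstacle here; the only tactical choice is to notice that to extract a norm identity from the Cayley-type relation one should test with $\eta+\chi$ (to produce $\eta-\chi$ on the right and a difference of squares on the left) and, for the $H^1$-seminorm version, with $\Delta_h(\eta+\chi)$ so that \eqref{NewEra1} converts the $\Delta_h$ into a $\delta_h$ inner product.
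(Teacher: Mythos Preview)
Your proof is correct and follows essentially the same approach as the paper: both use the skew-Hermitian nature of $\iii\tfrac{\tau}{2}\Delta_h$ (via \eqref{NewEra2} and \eqref{NewEra1}) to show invertibility, the contraction bound \eqref{Megatree2}, and the isometry properties \eqref{Megatree1}--\eqref{Megatree5}, and both reduce \eqref{Megatree4} to the identity $A_h+T_h=2I_h$. The only cosmetic difference is that for \eqref{Megatree5} the paper first applies $\Delta_h$ to the relation $\eta-\chi=\iii\tfrac{\tau}{2}\Delta_h(\eta+\chi)$ and then tests with $\eta+\chi$, whereas you test the original relation with $\Delta_h(\eta+\chi)$; by \eqref{NewEra1} these are equivalent.
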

%
%
%
%
\begin{proof}
Let $O_h=T_h$ or $A_h$.
In view of \eqref{NewEra2}, we obtain
\begin{equation}\label{proodos2}
\Ree[(O_h\chi,\chi)_{0,h}]=\|\chi\|_{0,h}^2\quad\forall\chi\in\dspace.
\end{equation}
Then, using \eqref{proodos2}, we, easily, conclude that ${\rm Ker}(O_h)=\{0\}$,
which, along with the finite dimensionality of $O_h$, yields that $O_h$ is
invertible.
Now, using \eqref{proodos2} and the Cauchy-Schwarz inequality, we
obtain
\begin{equation*}
\begin{split}
\|A_h^{-1}\chi\|_{0,h}^2=&\,\Ree[(A_hA_h^{-1}\chi,A_h^{-1}\chi)_{0,h}]\\
\leq&\,|(\chi,A_h^{-1}\chi)_{0,h}|\\
\leq&\|\chi\|_{0,h}\,\|A_h^{-1}\chi\|_{0,h}\quad\forall\chi\in\dspace,\\
\end{split}
\end{equation*}
which obviously yields \eqref{Megatree2}.
\par
Let $\chi\in\dspace$ and $v=B_h(\chi)$. Then, we have $A_h(v)=T_h(\chi)$ which is
equivalent to $v-\chi=\iii\,\frac{\tau}{2}\Delta_h(v+\chi)$. In view of \eqref{NewEra2},
we obtain $\Ree[(v-\chi,v+\chi)_{0,h}]=0$, or, equivalently 
$\|v\|_{0,h}^2=\|\chi\|_{0,h}^2$, and thus \eqref{Megatree1} is established. Since
$\Delta_h(v-\chi)=\iii\,\tfrac{\tau}{2}\Delta_h(\Delta_h(v+\chi))$, in view of 
\eqref{NewEra1} and \eqref{NewEra2}, we have
\begin{equation*}
\begin{split}
|v|_{1,h}^2-|\chi|_{1,h}^2=&\,\Ree[(\!\!(\delta_h(v-\chi),\delta_h(v+\chi))\!\!)_{0,h}]\\
=&\,-\Ree[(\Delta_h(v-\chi),v+\chi)_{0,h}]\\
=&\,-\Ree[\iii\,\tfrac{\tau}{2}\,(\Delta_h(\Delta_h(v+\chi)),v+\chi)_{0,h}]=0,\\
=&\,-\Ree[\iii\,\tfrac{\tau}{2}\,\|\Delta_h(v+\chi)\|_{0,h}]=0,\\
\end{split}
\end{equation*}
which, obviously, yields \eqref{Megatree5}.
\par
Finally, we obtain \eqref{Megatree4} proceeding as follows
%
%
%
\begin{equation*}
\begin{split}
(I_h+B_h)^{-1}A_h^{-1}=&\,[A_h^{-1}(A_h+T_h)]^{-1}A_h^{-1}\\
=&\,(A_h+T_h)^{-1}\,A_h\,A_h^{-1}\\
=&\,(A_h+T_h)^{-1}\\
=&\,(2\,I_h)^{-1}\\
=&\,\tfrac{1}{2}\,I_h.\\
\end{split}
\end{equation*}
%
%
%
%
%
%
%
\end{proof}
%
%
%
\section{Discretization Errors}\label{Section3}
%
\subsection{Consistency of the discretization in time}
To simplify the notation, we set $t^{\quarter}:=\tfrac{\tau}{4}$, 
$\uu^{\frac{1}{4}}:={\sf I}_h[\uu(t^{\quarter},\cdot)]$,
$\uu^{n}:={\sf I}_h[\uu(t_{n},\cdot)]$
for $n=0,\dots,N$, and $\uu^{n+\half}:={\sf I}_h[\uu(t^{n+\half},\cdot)]$
for $n=0,\dots,N-1$.
In view of the Dirichlet boundary conditions \eqref{PSL_b} and
the compatibility conditions \eqref{PSL_d},
it holds that $\uu^{\frac{1}{4}}\in\dspace$, $\uu^n\in\dspace$ for $n=0,\dots,N$
and $\uu^{n+\half}\in\dspace$ for $n=0,\dots,N-1$.
Also, we simplify the notation by setting $w^{\ssy A}=g(|u|^2)$ and
$w^{\ssy B}=g(|u|^2)\,u$.
\par
For $n=1,\dots,N-1$, we define ${\sf r}^n\in\dspace$ by
\begin{equation}\label{NORAD_22}
\tfrac{1}{2}\,\left[g\big(|\uu^{n+\half}|^2\big)+g\big(|\uu^{n-\half}|^2\big)\right]
=g\big(|\uu^n|^2\big)+{\sf r}^n.
\end{equation}
Then, applying the Taylor formula, in a standard way, we obtain
\begin{equation*}\label{USNORTH_21}
{\sf r}^n_j=\tfrac{\tau^2}{2}\,\int_{0}^{\half}
\left[\,(\tfrac{1}{2}-s)\,\partial_t^2w^{\ssy A}(t_n+s\,\tau,x_j)
+s\,\partial_t^2w^{\ssy A}(t^{n-\half}+s\,\tau,x_j)\right]\;ds,
\quad j=0,\dots,J+1,
\end{equation*}
for $n=1,\dots,N-1$, which, easily, yields
\begin{gather}
\max_{1\leq{n}\leq{\ssy N-1}}\|{\sf r}^n\|_{0,h}
\leq{\widehat{\sf C}}_{1,{\ssy A}}\,\tau^2
\,\,\max_{\ssy{[0,T]\times\III}}|\partial_t^2w^{\ssy A}|,\label{SHELL_21}\\
\max_{1\leq{n}\leq{\ssy N-1}}|{\sf r}^n|_{1,h}
\leq{\widehat{\sf C}}_{1,{\ssy B}}\,\tau^2
\,\,\max_{\ssy{[0,T]\times\III}}|\partial_x\partial_t^2w^{\ssy A}|,\label{SHELL_21H1}\\
\max_{2\leq{n}\leq{\ssy N-1}}
\|{\sf r}^n-{\sf r}^{n-1}\|_{0,h}
\leq{\widehat{\sf C}}_{2,{\ssy A}}\,\tau^3
\,\max_{\ssy{[0,T]\times\III}}|\partial_t^3w^{\ssy A}|,\label{SHELL_22}\\
\max_{2\leq{n}\leq{\ssy N-1}}
|{\sf r}^n-{\sf r}^{n-1}|_{1,h}
\leq{\widehat{\sf C}}_{2,{\ssy B}}\,\tau^3
\,\max_{\ssy{[0,T]\times\III}}|\partial_x\partial_t^3w^{\ssy A}|.\label{SHELL_22H1}
\end{gather}
\par
Let ${\sf r}^{\frac{1}{4}}\in\gspace$ be defined by
\begin{equation}\label{NORAD_00}
\uu^{\half}-\uu^0=\iii\,\tfrac{\tau}{2}\,{\sf I}_h\left[
\tfrac{\uu_{xx}(t^{\half},\cdot)+\uu_{xx}(t_0,\cdot)}{2}\right]
+\iii\,\tfrac{\tau}{2}\,g(|\uu^0|^2)\otimes\tfrac{\uu^{\half}+\uu^0}{2}
+\tfrac{\tau}{2}
\,{\sf I}_h\left[\tfrac{f(t^{\half},\cdot)+f(t_0,\cdot)}{2}\right]
+\tfrac{\tau}{2}\,{\sf r}^{\frac{1}{4}}
\end{equation}
and ${\sf r}^{n+\half}\in\gspace$ be given by
\begin{equation}\label{NORAD_02}
\uu^{n+1}-\uu^n=\iii\,\tau
\,{\sf I}_h\left[\tfrac{\uu_{xx}(t_{n+1},\cdot)+\uu_{xx}(t_n,\cdot)}{2}\right]
+\iii\,\tau\,g(|\uu^{n+\half}|^2)\otimes\tfrac{\uu^{n+1}+\uu^n}{2}
+\tau\,{\sf I}_h\left[\tfrac{f(t_{n+1},\cdot)+f(t_n,\cdot)}{2}\right]
+\tau\,{\sf r}^{n+\half}
\end{equation}
for $n=0,\dots,N-1$. Assuming that the solution $\uu$ is
smooth enough on $[0,T]\times{\mathcal I}$, and using
\eqref{PSL_d} and the Dirichlet boundary conditions \eqref{PSL_b}, we conclude
that $u_{xx}(t,x)=-f(t,x)$ for $t\in[0,T]$ and $x\in\{x_a,x_b\}$.
Thus, we have ${\sf r}^{\frac{1}{4}}\in\dspace$ and ${\sf r}^{n+\half}\in\dspace$ for $n=0,\dots,N-1$.
\par
Combining \eqref{PSL_a} with a standard application 
of the Taylor formula we get the following formulas:
\begin{equation}\label{USNORTH_02}
\begin{split}
{\sf r}^{n+\half}_j=&\,\tfrac{\tau^2}{2}\,\int_0^{\half}
\left[s^2\,\partial_t^3\uu(t_n+s\,\tau,x_j)
+(\tfrac{1}{2}-s)^2\,\partial_t^3\uu(t^{n+\half}+s\,\tau,x_j)\right]\;ds\\
&\,-\tfrac{\tau^2}{2}\,\int_0^{\frac{1}{2}}
\left[s\,\partial_t^3\uu(t_n+s\,\tau,x_j)
+(\tfrac{1}{2}-s)\,\partial_t^3\uu(t^{n+\half}+s\,\tau,x_j)\right]\;ds\\
&\,+\iii\,\tfrac{\tau^2}{2}\,\int_0^{\frac{1}{2}}
\left[s\,\partial_t^3w^{\ssy B}(t_n+s\,\tau,x_j)
+(\tfrac{1}{2}-s)\,\partial_t^3w^{\ssy B}(t^{n+\half}+s\,\tau,x_j)\right]\;ds\\
&\,-\iii\,\tfrac{g(|\uu(t^{n+\half},x_j)|^2)}{2}\,\tau^2
\,\int_0^{\frac{1}{2}}\left[s\,\uu_{tt}(t_n+s\,\tau,x_j)
+(\tfrac{1}{2}-s)\,\uu_{tt}(t^{n+\half}+s\,\tau,x_j)\right]\;ds\\
\end{split}
\end{equation}
for $j=0,\dots,J+1$ and $n=0,\dots,N-1$, and
\begin{equation}\label{USNORTH_04}
\begin{split}
{\sf r}^{\quarter}_j=&\,\tfrac{\tau^2}{2}
\,\int_0^{\quarter}\left[\,s^2\,\partial_t^3\uu(s\,\tau,x_j)
+(\tfrac{1}{4}-s)^2\,\partial_t^3\uu(t^{\quarter}+s\,\tau,x_j)\,\right]\;ds\\
&\,-\tfrac{\tau^2}{2}\,\int_0^{\frac{1}{4}}\left[s\,\partial_t^3\uu(s\,\tau,x_j)
+(\tfrac{1}{4}-s)\,\partial_t^3\uu(t^{\quarter}+s\,\tau,x_j)\right]\;ds\\
&\,-\iii\,g(|\uu_j^{\quarter}|^2)
\,\tfrac{\tau^2}{2}\,\int_0^{\frac{1}{4}}
\left[s\,\partial_t^2\uu(s\,\tau,x_j)
+(\tfrac{1}{4}-s)\,\partial_t^2\uu(t^{\quarter}+s\,\tau,x_j)\right]\;ds\\
&\,+\iii\,\tfrac{\tau^2}{2}
\,\int_0^{\frac{1}{4}}\left[s\,\partial_t^2w^{\ssy B}(s\,\tau,x_j)
+(\tfrac{1}{4}-s)\,\partial_t^2w^{\ssy B}(t^{\quarter}+s\,\tau,x_j)\right]\;ds\\
&\,+\iii\,\tfrac{\uu_j^{\frac{1}{2}}+\uu^0_j}{2}\,\tau\,
\int_{0}^{\quarter}\partial_tw^{\ssy A}(s\,\tau,x_j)\;ds\\
\end{split}
\end{equation}
for $j=0,\dots,J+1$.
From \eqref{USNORTH_02} and \eqref{USNORTH_04}, we obtain:
\begin{gather}
\|{\sf r}^{\quarter}\|_{0,h}\leq\,{\widehat{\sf C}}_{3,{\ssy A}}\,\tau\,
\max_{\ssy[0,T]\times I}(|\partial_t^2u|+|\partial_t^3u|+|\partial_tw^{\ssy A}|
+|\partial_t^2w^{\ssy B}|),\label{SHELL_01}\\
|{\sf r}^{\quarter}|_{1,h}\leq\,{\widehat{\sf C}}_{3,{\ssy B}}\,\tau\,
\max_{\ssy[0,T]\times I}(|\partial_x\partial_t^2u|
+|\partial_x\partial_t^3u|
+|\partial_x\partial_tw^{\ssy A}|
+|\partial_x\partial_t^2w^{\ssy B}|),\label{SHELL_01H1}\\
\max_{0\leq{n}\leq{\ssy N-1}}
\|{\sf r}^{n+\half}\|_{0,h}\leq\,{\widehat{\sf C}}_{4,{\ssy A}}\,\tau^2\,
\max_{\ssy[0,T]\times I}(|\partial_t^2u|+|\partial_t^3u|
+|\partial_t^3w^{\ssy B}|),\label{SHELL_02}\\
\max_{0\leq{n}\leq{\ssy N-1}}
|{\sf r}^{n+\half}|_{1,h}\leq\,{\widehat{\sf C}}_{4,{\ssy B}}\,\tau^2\,
\max_{\ssy[0,T]\times I}(|\partial_x\partial_t^2u|+|\partial_x\partial_t^3u|
+|\partial_x\partial_t^3w^{\ssy B}|),\label{SHELL_02H1}\\
\max_{1\leq{n}\leq{\ssy N-1}}
\|{\sf r}^{n+\half}-{\sf r}^{(n-1)+\half}\|_{0,h}\leq\,{\widehat{\sf C}}_{5,{\ssy A}}\,\tau^3
\,\max_{\ssy[0,T]\times I}(|\partial_t^3u|+|\partial_t^4u|+|\partial_t^4w^{\ssy B}|),
\label{SHELL_03}\\
\max_{1\leq{n}\leq{\ssy N-1}}
|{\sf r}^{n+\half}-{\sf r}^{(n-1)+\half}|_{1,h}\leq\,{\widehat{\sf C}}_{5,{\ssy B}}\,\tau^3
\,\max_{\ssy[0,T]\times I}(|\partial_x\partial_t^3u|
+|\partial_x\partial_t^4u|+|\partial_x\partial_t^4w^{\ssy B}|).
\label{SHELL_03H1}
\end{gather}
%
%
%
%
%
%
%
%
\subsection{Approximation estimates for the Discrete Elliptic Projection}
Let $v\in C^4(I;\cset)$. After applying the Taylor formula around $x=x_j$, it
follows that 
\begin{equation}\label{ELP_2}
\Delta_h(\PP v)-\PP(v'')=\tfrac{h^2}{12}\,{\sf r}^{\ssy{\rm E}}(v),
\end{equation}
where ${\sf r}^{\ssy{\rm E}}(v)\in\dspace$ is defined by
\begin{equation}\label{ELP_3}
({\sf r}^{\ssy{\rm E}}(v))_j:=\int_0^1\left[\,(1-y)^3\,v''''(x_j+h\,y)
+y^3\,v''''(x_{j-1}+h\,y)\,\right]\;dy,
\quad j=1,\dots,J.
\end{equation}
Subtracting \eqref{ELP_1} from \eqref{ELP_2}, we get the following
error equation
\begin{equation}\label{ELP_4}
\Delta_h(\PP v-{\sf R}_h[v])=\tfrac{h^2}{12}\,{\sf r}^{\ssy{\rm E}}(v).
\end{equation}
Taking the inner product of both sides of \eqref{ELP_4}
with $(\PP(v)-{\sf R}_h[v])$  and using \eqref{NewEra2},
the Cauchy-Schwarz inequality and \eqref{dpoincare}
we obtain
\begin{equation}\label{ELP_5_0}
|{\sf R}_h[v]-\PP(v)|_{1,h}\leq\,\tfrac{{\sf L}}{12}
\,h^2\,\|{\sf r}^{\ssy{\rm E}}(v)\|_{0,h}\\
\end{equation}
which, along with \eqref{dpoincare} and \eqref{ELP_3}, yields
\begin{equation}\label{ELP_5}
\|{\sf R}_h[v]-\PP(v)\|_{1,h}
\leq\,\tfrac{\sqrt{1+{\sf L}^2}\,{\sf L}^{\frac{3}{2}}}{24}
\,h^2\,\max_{\ssy I}|v''''|.
\end{equation}
\par   
We close the section with a useful lemma.
%
%
\begin{lem}
Let $w\in C_{t,x}^{1,0}(\QQ)$ and
$\partial_tw\in C^{0,4}_{t,x}(\QQ)$.
Then, it holds that
\begin{equation}\label{ELP_6}
\Big\|{\sf R}_h\left[\tfrac{w(t,\cdot)-w(s,\cdot)}{t-s}\right]
-\PP\left[\tfrac{w(t,\cdot)-w(s,\cdot)}{t-s}\right]\Big\|_{1,h}
\leq\,\tfrac{\sqrt{1+{\sf L}^2}\,{\sf L}^{\frac{3}{2}}}{24}
\,h^2\,\max_{\ssy\QQ}|\partial_x^4\partial_tw|
\end{equation}
for all $t,s\in[0,T]$ with $s<t$.
\end{lem}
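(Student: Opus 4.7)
The plan is to apply the elliptic projection estimate \eqref{ELP_5} to the function
\[
v(x):=\tfrac{w(t,x)-w(s,x)}{t-s},\quad x\in I,
\]
for fixed $t,s\in[0,T]$ with $s<t$. Since both $\PP$ and ${\sf R}_h$ are linear operators on $C^2(I;\cset)$, the left-hand side of \eqref{ELP_6} equals $\|{\sf R}_h[v]-\PP(v)\|_{1,h}$, so it suffices to verify that $v\in C^4(I;\cset)$ and to bound $\max_I|v''''|$ by $\max_Q|\partial_x^4\partial_tw|$.

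First I would use the fundamental theorem of calculus (or the mean value form) to write
\[
v(x)=\int_0^1 \partial_tw\bigl(s+r(t-s),x\bigr)\;dr,\quad x\in I.
\]
The hypothesis $\partial_tw\in C^{0,4}_{t,x}(\QQ)$ then allows me to differentiate under the integral sign four times in $x$, giving
\[
v''''(x)=\int_0^1\partial_x^4\partial_tw\bigl(s+r(t-s),x\bigr)\;dr,\quad x\in I,
\]
and in particular $v\in C^4(I;\cset)$. Taking absolute values and using the trivial bound $\int_0^1 1\;dr=1$ yields $\max_I|v''''|\leq\max_{\QQ}|\partial_x^4\partial_tw|$.

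Finally, applying \eqref{ELP_5} to this $v$, I obtain
\[
\|{\sf R}_h[v]-\PP(v)\|_{1,h}\leq\,\tfrac{\sqrt{1+{\sf L}^2}\,{\sf L}^{\frac{3}{2}}}{24}\,h^2\,\max_I|v''''|\leq\,\tfrac{\sqrt{1+{\sf L}^2}\,{\sf L}^{\frac{3}{2}}}{24}\,h^2\,\max_{\QQ}|\partial_x^4\partial_tw|,
\]
which is exactly \eqref{ELP_6}. There is no real obstacle here; the only subtlety is invoking linearity of $\PP$ and ${\sf R}_h$ in order to commute them with the difference quotient in $t$, and justifying differentiation under the integral sign, both of which are immediate from the regularity hypothesis on $w$.
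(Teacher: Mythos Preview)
Your proof is correct and follows essentially the same route as the paper's: both represent the difference quotient as an average of $\partial_tw$ over the time interval and then invoke the elliptic projection estimate. The only cosmetic difference is that the paper goes through the intermediate bound \eqref{ELP_5_0} on $\|{\sf r}^{\ssy{\rm E}}(v)\|_{0,h}$ and uses linearity of ${\sf r}^{\ssy{\rm E}}$ to pass the time integral inside, whereas you apply the packaged estimate \eqref{ELP_5} directly to $v$ after bounding $\max_I|v''''|$; the two arguments are interchangeable.
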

%
%
%
\begin{proof}
For $t,s\in[0,T]$ with $s<t$, and $\psi_h:=
{\sf R}_h\left[\tfrac{w(t,\cdot)-w(s,\cdot)}{t-s}\right]
-\PP\left[\tfrac{w(t,\cdot)-w(s,\cdot)}{t-s}\right]$. Using
\eqref{dpoincare} and \eqref{ELP_5_0}, we have
\begin{equation*}
\begin{split}
\|\psi_h\|_{1,h}\leq&\,\sqrt{1+{\sf L}^2}\,|\psi_h|_{1,h}\\
\leq&\,\tfrac{\sqrt{1+{\sf L}^2}\,\sf L}{12}\,h^2\,
\Big\|{\sf r}^{\ssy{\rm E}}\left[\tfrac{w(t,\cdot)
-w(s,\cdot)}{t-s}\right]\Big\|_{0,h}\\
\leq&\,\tfrac{\sqrt{1+{\sf L}^2}\,\sf L}{12}\,h^2\,(t-s)^{-1}\,
\int_{s}^{t}\|{\sf r}^{\ssy{\rm E}}[\partial_tw({\hat s},\cdot)]\|_{0,h}\;d{\hat s}\\
\leq&\,\tfrac{\sqrt{1+{\sf L}^2}\,{\sf L}^{\frac{3}{2}}}{24}
\,h^2\,\max_{\ssy\QQ}|\partial_x^4\partial_tw|.\\
\end{split}
\end{equation*}
\end{proof}
\section{Convergence}\label{Section4}
%
%
\subsection{Existence and uniqueness of the (RFD)
approximations}
%
%
The following lemma establishes that the (RFD)
approximations are unconditionally well-defined.
\begin{lem}\label{Plan_1}
The (RFD) approximations $W^{\half}$
and $(W^{n})_{n=0}^{\ssy N}$
defined by \eqref{BRS_1}, \eqref{BRS_12}, \eqref{BRS_2}
and \eqref{BRS_4} are well-defined.
\end{lem}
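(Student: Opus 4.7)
The plan is to observe that the only non-explicit steps in the scheme are the three linear systems producing $W^{\half}$, $W^1$, and $W^{n+1}$ (with $n\ge 1$); the quantities $W^0=\mathsf{R}_h[u_0]$ and $\Phi^{\half}=g(|W^{\half}|^2)$, $\Phi^{n+\half}=2g(|W^n|^2)-\Phi^{n-\half}$ are given by explicit formulas and, by induction, the latter lie in $\mathbb{R}_h$. So we reduce to showing that each linear system has a unique solution in $\dspace$.

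I would unify the three systems into a single abstract form. For a given time-weight $\sigma>0$ (equal to $\tau/2$ or $\tau$), a given real coefficient $\Phi\in\rspace$, and a given right-hand side, each equation takes the shape $L_{\sigma,\Phi}(W^{\mathrm{new}})=F$, where $L_{\sigma,\Phi}:\dspace\to\dspace$ is the linear operator
\begin{equation*}
L_{\sigma,\Phi}(v):=v-\iii\,\tfrac{\sigma}{2}\,\Delta_h v-\iii\,\tfrac{\sigma}{2}\,\Phi\otimes v.
\end{equation*}
Since $\dspace$ is finite-dimensional, it suffices to prove that $L_{\sigma,\Phi}$ is injective.

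For injectivity, suppose $L_{\sigma,\Phi}(v)=0$ for some $v\in\dspace$. Taking the discrete inner product $(\cdot,\cdot)_{0,h}$ with $v$ and then the real part gives
\begin{equation*}
\|v\|_{0,h}^2-\tfrac{\sigma}{2}\,\Ree\big[\iii\,(\Delta_h v,v)_{0,h}\big]-\tfrac{\sigma}{2}\,\Ree\big[\iii\,(\Phi\otimes v,v)_{0,h}\big]=0.
\end{equation*}
By \eqref{NewEra2}, $(\Delta_h v,v)_{0,h}=-|v|_{1,h}^2\in\rset$, so the second term vanishes. For the third term, since $\Phi\in\rspace$ we have $(\Phi\otimes v,v)_{0,h}=h\sum_{j=0}^{J+1}\Phi_j|v_j|^2\in\rset$, hence it also vanishes. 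Thus $\|v\|_{0,h}=0$, i.e. $v=0$, and $L_{\sigma,\Phi}$ is invertible.

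Applying this with $(\sigma,\Phi)=(\tau/2,g(|W^0|^2))$ yields the existence and uniqueness of $W^{\half}$; with $(\sigma,\Phi)=(\tau,\Phi^{\half})$ it yields $W^1$; and inductively, for $n\ge 1$, with $(\sigma,\Phi)=(\tau,\Phi^{n+\half})$ — noting that $\Phi^{n+\half}\in\rspace$ by the recursion and the fact that $g$ is real-valued — it yields $W^{n+1}$. There is no real obstacle here: the key observation is simply that the reality of the coefficient $\Phi$ is what kills the offending imaginary term in the energy identity, and no smallness assumption on $\tau$ is needed, which is exactly the unconditional well-posedness asserted.
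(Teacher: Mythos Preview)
Your proof is correct and follows essentially the same approach as the paper's: both define the same one-parameter family of linear operators on $\dspace$, compute $\Ree(L v,v)_{0,h}=\|v\|_{0,h}^2$ via \eqref{NewEra2} and the reality of $\Phi$, and conclude invertibility by finite dimensionality. One minor slip: in \eqref{BRS_12} the coefficient is $g(|u^0|^2)$ (the interpolant of the initial data), not $g(|W^0|^2)$, though this does not affect your argument since either lies in $\rspace$.
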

\begin{proof}
Let $\phi\in\rspace$, $\varepsilon>0$ and
$\nu_h[\varepsilon,\phi]:\dspace\mapsto\dspace$ be a discrete
operator given by
\begin{equation*}
\nu_h[\varepsilon,\phi]\chi:=\chi-\iii\,\varepsilon\,\tau\,\Delta_h(\chi)
-\iii\,\varepsilon\,\tau\,\phi\otimes\chi
\quad\forall\chi\in\dspace.
\end{equation*}
Then, using \eqref{NewEra2}, we obtain that
$\Ree(\nu_h[\varepsilon,\phi]\chi,\chi)_{0,h}=\|\chi\|^2_{0,h}$
for $\chi\in\dspace$, which, easily, yields that
${\rm Ker}(\nu_h[\varepsilon,\phi])=\{0\}$. Since,
the space $\dspace$ is finite dimensional, we conclude that
$\nu_n[\varepsilon,\phi]$ is invertible.
\par
By \eqref{BRS_1} the initial approximation $W^0$ is clearly defined. 
According to \eqref{BRS_12}, \eqref{BRS_2} and \eqref{BRS_4}
we have
\begin{equation*}
W^{\half}=\nu_h^{-1}[\tfrac{1}{4},g(|u^0|^2)]\left[\,
W^0+\iii\,\tfrac{\tau}{4}\,\Delta_h(W^0)
+\iii\,\tfrac{\tau}{4}\,g(|u^0|^2)\otimes W^0
+\tfrac{\tau}{4}
\,\PP\left[f(t^{\half},\cdot)+f(t_0,\cdot)\right]
\,\right]
\end{equation*}
and
\begin{equation*}
W^{n+1}=\nu_h^{-1}[\tfrac{1}{2},\Phi^{n+\half}]\left[\,
W^n+\iii\,\tfrac{\tau}{2}\,\Delta_h(W^n)
+\iii\,\tfrac{\tau}{2}\,\Phi^{n+\half}\otimes W^n
+\tfrac{\tau}{2}
\,\PP\left[f(t_{n+1},\cdot)+f(t_n,\cdot)\right]
\,\right]
\end{equation*}
for $n=1,\dots,N-1$.
\par

\end{proof}
%
%
%
\subsection{The (MRFD) scheme}
%
For given $\delta>0$, the modified version of the (RFD) method 
derives $\delta-$dependent approximations of the solution
$\uu$ to \eqref{PSL_a}-\eqref{PSL_d} according to the steps below:
\par\vskip0.2truecm\par
{\tt Step I}: Define $V_{\delta}^0\in\dspace$  by
\begin{equation}\label{modCB1}
V_{\delta}^0:={\sf R}_h[\uu_0]
\end{equation}
and then find $V^{\half}_{\delta}\in\dspace$ such that
\begin{equation}\label{modCB1a}
\begin{split}
V^{\half}_{\delta}-V^0_{\delta}
=\iii\,\tfrac{\tau}{2}\,
\Delta_{h}\left(\,\tfrac{V^{\half}_{\delta}+V^0_{\delta}}{2}\,\right)
+\iii\,\tfrac{\tau}{2}
\,g\big(|\uu^0|^2\big)\otimes\tfrac{V^{\half}_{\delta}+V^0_{\delta}}{2}
+\tfrac{\tau}{2}
\,\PP\left[\tfrac{f(t^{\half},\cdot)+f(t_0,\cdot)}{2}\right].
\end{split}
\end{equation}
\par\vskip0.2truecm\par
{\tt Step I\!I}: Define $\Phi_{\delta}^{\half}\in\rspace$ by
\begin{equation}\label{modCB1b}
\Phi^{\half}_{\delta}:=g\big(\big|\gamma_{\delta}(V_{\delta}^{\half})\big|^2\big)
\end{equation}
and then find $V_{\delta}^1\in\dspace$ such that
\begin{equation}\label{modCB2}
\begin{split}
V^1_{\delta}-V^0_{\delta}= &\,
\iii\,\tau\,\Delta_h\left(\tfrac{V^1_{\delta}+V^0_{\delta}}{2}\right)
+\iii\,\tau\,\gff_{\delta}\big(\Phi^{\half}_{\delta}\big)
\otimes\gamma_{\delta}\left(\tfrac{V^1_{\delta}+V^0_{\delta}}{2}\right)
+\tau\,\PP\left[\tfrac{f(t_{1},\cdot)+f(t_0,\cdot)}{2}\right].\\
\end{split}
\end{equation}
\par\vskip0.2truecm\par
{\tt Step I\!I\!I}: For $n=1,\dots,N-1$,  define
$\Phi_{\delta}^{n+\half}\in\rspace$ by
\begin{equation}\label{modCB3a}
\Phi_{\delta}^{n+\half}:=2\,g\big(\big|\gamma_{\delta}(V_{\delta}^n)\big|^2\big)
-\Phi_{\delta}^{n-\half}
\end{equation}
and then find $V^{n+1}_{\delta}\in\dspace$ such that
\begin{equation}\label{modCB3b}
V^{n+1}_{\delta}-V^n_{\delta}=
\iii\,\tau\,\Delta_h\left(\tfrac{V^{n+1}_{\delta}+V^n_{\delta}}{2}\right)
+\iii\,\tau\,\gff_{\delta}\big(\Phi_{\delta}^{n+\half}\big)
\otimes\gamma_{\delta}\left(\tfrac{V^{n+1}_{\delta}+V^n_{\delta}}{2}\right)
+\tau\,\PP\left[\tfrac{f(t_{n+1},\cdot)+f(t_n,\cdot)}{2}\right].
\end{equation}
\begin{remark}
Under the light of Lemma~\ref{Plan_1} and
in view of \eqref{BRS_1}, \eqref{modCB1}, \eqref{BRS_12} and
\eqref{modCB1a} it follows that $V_{\delta}^0=\UUU^0$ and
$V_{\delta}^{\half}=\UUU^{\half}$.
\end{remark}

%
\subsection{Existence of the (MRFD) approximations}
First, we recall the following Brouwer-type fixed-point lemma,
for a proof of which we refer the reader to \cite{ADKar}. 
\begin{lem}\label{Plan_0}
Let $(\cset,H,(\cdot,\cdot)_{\ssy H})$ be a complex
finite dimensional inner product space,
$\|\cdot\|_{\ssy H}$ be the associated norm,
$\mu:H\mapsto H$ be a continuous operator,
${\mathcal S}_{\varepsilon}:=\{z\in H:\|z\|_{\ssy H}=\varepsilon\}$
for $\varepsilon>0$, and 
${\mathcal B}_{\varepsilon}:=\{z\in H:\|z\|_{\ssy H}\leq\varepsilon\}$
for $\varepsilon>0$.
If there exists $\alpha>0$ such that
$\Ree(\mu(z),z)_{\ssy H}\ge 0\quad\forall\,z\in{\mathcal S}_{\alpha}$,
then there exists $z_{\star}\in{\mathcal B}_{\alpha}$
such that $\mu(z_{\star})=0$.
\end{lem}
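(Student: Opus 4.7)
The plan is to argue by contradiction, invoking the classical Brouwer fixed-point theorem after a suitable normalization. Suppose, toward a contradiction, that $\mu(z)\neq 0$ for every $z\in{\mathcal B}_{\alpha}$. Since $\mu$ is continuous and ${\mathcal B}_{\alpha}$ is compact, $\|\mu(\cdot)\|_{\ssy H}$ attains a positive minimum on ${\mathcal B}_{\alpha}$, so the map
$$F(z):=-\alpha\,\frac{\mu(z)}{\|\mu(z)\|_{\ssy H}}$$
is well defined and continuous on ${\mathcal B}_{\alpha}$, and satisfies $\|F(z)\|_{\ssy H}=\alpha$. In particular, $F$ is a continuous self-map of ${\mathcal B}_{\alpha}$ whose image actually lies on ${\mathcal S}_{\alpha}$.

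Next I would apply Brouwer's fixed-point theorem. Viewing the complex finite-dimensional inner product space $H$ as a real inner product space of twice the complex dimension (with real inner product $\Ree(\cdot,\cdot)_{\ssy H}$), the set ${\mathcal B}_{\alpha}$ becomes homeomorphic to a closed Euclidean ball. Brouwer then produces a fixed point $z_{\star}\in{\mathcal B}_{\alpha}$ with $F(z_{\star})=z_{\star}$; the identity $\|F(z_{\star})\|_{\ssy H}=\alpha$ forces $z_{\star}\in{\mathcal S}_{\alpha}$.

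Finally I would extract the contradiction. From the fixed-point equation $z_{\star}=-\alpha\,\mu(z_{\star})/\|\mu(z_{\star})\|_{\ssy H}$ one computes
$$(\mu(z_{\star}),z_{\star})_{\ssy H}=-\alpha\,\|\mu(z_{\star})\|_{\ssy H},$$
whence $\Ree(\mu(z_{\star}),z_{\star})_{\ssy H}=-\alpha\,\|\mu(z_{\star})\|_{\ssy H}<0$. This contradicts the hypothesis that $\Ree(\mu(z),z)_{\ssy H}\ge 0$ for all $z\in{\mathcal S}_{\alpha}$, so the initial assumption must fail and some $z_{\star}\in{\mathcal B}_{\alpha}$ must satisfy $\mu(z_{\star})=0$.

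The proof is essentially bookkeeping once the right renormalization is found; I do not expect any substantial obstacle. The only delicate point is the formal invocation of Brouwer in the complex setting, which is handled by the standard realification of $H$. The conceptual core is the trick of renormalizing $-\mu/\|\mu\|_{\ssy H}$ to radius $\alpha$ so that any Brouwer fixed point is automatically forced onto ${\mathcal S}_{\alpha}$, precisely where the sign hypothesis can be activated to derive the contradiction.
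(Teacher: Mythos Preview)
Your argument is correct and is precisely the standard proof of this Brouwer-type lemma: the renormalized map $F(z)=-\alpha\,\mu(z)/\|\mu(z)\|_{\ssy H}$, the realification to invoke Brouwer, and the sign computation at the fixed point are all sound. Note, however, that the paper does not actually prove this lemma at all; it merely states it and refers the reader to \cite{ADKar} for a proof, so there is no ``paper's own proof'' to compare against. What you have written is essentially the argument one finds in that reference (and in many textbooks), so your proposal is both correct and in line with the intended approach.
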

%
%

%
\begin{proposition}\label{Plan_2}
For $\delta>0$, there exist $(V_{\delta}^n)_{n=1}^{\ssy N}\subset\dspace$
satisfying \eqref{modCB2} and \eqref{modCB3b}.
\end{proposition}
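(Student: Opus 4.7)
The plan is to apply the Brouwer-type fixed-point lemma (Lemma~\ref{Plan_0}) on the finite-dimensional complex inner product space $(\dspace,(\cdot,\cdot)_{0,h})$, inductively on $n$. Thanks to the remark preceding the proposition, we already have $V_\delta^0=W^0$ and $V_\delta^{\half}=W^{\half}$ from Lemma~\ref{Plan_1}, so $\Phi_\delta^{\half}\in\rspace$ is well-defined by \eqref{modCB1b}; and if $V_\delta^n\in\dspace$ is available for some $n\geq 0$, then $\Phi_\delta^{n+\half}\in\rspace$ is in turn determined by \eqref{modCB3a}. The only unknown in \eqref{modCB3b} (and likewise \eqref{modCB2} when $n=0$) appears through the average $z:=\tfrac{V_\delta^{n+1}+V_\delta^n}{2}$, so the scheme can be recast as $\mu_n(z)=0$ on $\dspace$, where
\begin{equation*}
\mu_n(z):=2z-2V_\delta^n-\iii\tau\,\Delta_h z-\iii\tau\,\gff_\delta(\Phi_\delta^{n+\half})\otimes\gamma_\delta(z)-\tau\,\PP\!\left[\tfrac{f(t_{n+1},\cdot)+f(t_n,\cdot)}{2}\right].
\end{equation*}
I would first note that $\mu_n$ is a continuous self-map of $\dspace$: continuity is clear from the smoothness of $\gff_\delta$ and $\gamma_\delta$, and preservation of the homogeneous boundary values uses $\gff_\delta(0)=0$ (oddness, cf.~\eqref{MLF_1}).

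The crucial step is a coercivity estimate for $\Ree(\mu_n(z),z)_{0,h}$. By \eqref{NewEra2} the quantity $(\Delta_hz,z)_{0,h}=-|z|_{1,h}^2$ is real, so the $\iii\tau\,\Delta_hz$ term contributes zero to the real part. The Cauchy--Schwarz inequality handles the $V_\delta^n$ and $\PP[f]$ contributions, while the uniform bounds $|\gff_\delta(\Phi_\delta^{n+\half})|_{\infty,h}\leq 2\delta$ and $|\gamma_\delta(z)|_{\infty,h}\leq\sqrt{2}\sup_{\ssy\rset}|\gff_\delta|$ furnished by \eqref{MLF_4} bound the mollifier term by a constant depending only on $\delta$, $\sf L$, $\tau$ and $\max_{\QQ}|f|$, times $\|z\|_{0,h}$. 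Together these give
\begin{equation*}
\Ree(\mu_n(z),z)_{0,h}\ge 2\,\|z\|_{0,h}^2-\bigl(\,2\,\|V_\delta^n\|_{0,h}+\tau\,C_{\ssy\delta,f,{\sf L}}\,\bigr)\,\|z\|_{0,h},
\end{equation*}
which is strictly positive on the sphere $\{z:\|z\|_{0,h}=\alpha\}$ as soon as $\alpha>\|V_\delta^n\|_{0,h}+\tfrac{\tau}{2}\,C_{\ssy\delta,f,{\sf L}}$. Lemma~\ref{Plan_0} then delivers $z_\star\in\dspace$ with $\mu_n(z_\star)=0$, and setting $V_\delta^{n+1}:=2z_\star-V_\delta^n$ advances the induction.

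I do not foresee any serious obstacle: the entire purpose of introducing the mollifier $\gamma_\delta$ and the truncated multiplier $\gff_\delta(\Phi_\delta^{n+\half})$ is precisely to render the nonlinear term \emph{uniformly} bounded in $z$, which is what makes the above coercivity bound valid on every sufficiently large sphere without requiring any a priori control of the approximations themselves. The only point that needs a moment of attention is verifying that $\mu_n$ genuinely maps $\dspace$ into itself, which reduces to $\gamma_\delta(0)=0$. Uniqueness is neither claimed nor needed at this stage.
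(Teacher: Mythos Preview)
Your proposal is correct and follows essentially the same route as the paper: recast the scheme in the averaged unknown, verify the Brouwer-type coercivity on $(\dspace,(\cdot,\cdot)_{0,h})$ using \eqref{NewEra2} to kill the Laplacian contribution and the uniform boundedness of $\gff_\delta$, $\gamma_\delta$ to control the nonlinear term, then apply Lemma~\ref{Plan_0} and induct. Your explicit remark that $\mu_n$ maps $\dspace$ into itself because $\gamma_\delta(0)=0$ is a point the paper leaves implicit, so your write-up is if anything slightly more careful.
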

%
%
%
%
\begin{proof}
Let $\varphi\in\rspace$, $z\in\dspace$
and $\mu_h:\dspace\mapsto\dspace$ be a continuous
non linear operator defined by
\begin{equation*}
\mu_h(\chi):=2\,\chi-\,\iii\,\tau\,\Delta_h(\chi)
-\iii\,\tau\,\left[\,\gff_{\delta}(\varphi)\otimes \gamma_{\delta}(\chi)\,\right]
+z\quad\forall\,\chi\in\dspace.
\end{equation*}
Let $\alpha>0$ and $\chi\in\dspace$ with $\|\chi\|_{0,h}=\alpha$.
Using \eqref{NewEra2}, the Cauchy-Schwarz inequality and \eqref{MLF_4},
we obtain
\begin{equation}\label{KAFE_01}
\begin{split}
\Ree(\mu_h(\chi),\chi)_{0,h}=&\,2\,\|\chi\|_{0,h}^2
+\tau\,\Imm[\left(\gff_{\delta}(\varphi)\otimes\gamma_{\delta}(\chi),
\chi\right)_{0,h}]+\Ree[(z,\chi)_{0,h}]\\
\ge&\,\|\chi\|_{0,h}\,\left(\,2\,\|\chi\|_{0,h}
-\tau\,|\gff_{\delta}(\varphi)|_{\infty,h}\,\|\gamma_{\delta}(\chi)\|_{0,h}
-\|z\|_{0,h}\,\right)\\
\ge&\,\alpha\,\left(\,2\,\alpha
-\tau\,\sqrt{2\,{\sf L}}\,\sup_{\ssy\rset}|\gff_{\delta}|^2
-\|z\|_{0,h}\,\right).\\
\end{split}
\end{equation}
Choosing $\alpha=\alpha_{\star}:=\tfrac{\tau\,\sqrt{2\,{\sf L}}}{2}
\,\sup_{\ssy\rset}|\gff_{\delta}|^2+\tfrac{1}{2}\,\|z\|_{0,h}+1$,
\eqref{KAFE_01} yields that 
$\Ree(\mu_h(\chi),\chi)_{0,h}>0$, which, in view of Lemma~\ref{Plan_0}, results that
there exists $\chi_{\star}\in\dspace$ such that
$\|\chi_{\star}\|_{0,h}\leq\alpha_{\star}$ and $\mu_h(\chi_{\star})=0$.
%
%
\par  
We establish the existence of the modified approximations by induction.
First, we observe that $V_{\delta}^0$ is well defined. Next, we assume
that there exists a modified approximation $V_{\delta}^n$ for a
$n\in\{0,\dots,N-1\}$. Then, we choose $\varphi=\Phi^{n+1}_{\delta}$ and
$z=2\,V_{\delta}^n+\tfrac{\tau}{2}\,\PP\left[f(t_{n+1},\cdot)+f(t_n,\cdot)\right]$,
to obtain a root $\chi^{n+1}_{\star}\in\dspace$ of the corresponding operator $\mu_h$.
Thus, $V^{n+1}_{\delta}=2\,\chi^{n+1}_{\star}-V^n_{\delta}$ forms a solution to the
nonlinear system \eqref{modCB3b} when $n\ge1$, or, \eqref{modCB2} when $n=0$.
\end{proof}
%
%
%
\subsection{Convergence of the (MRFD) scheme}\label{modCB_Conv}
%
In this section we investigate the convergence properties
of the modified (RFD) approximations.
%
%
%
\begin{thm}\label{modCB_Conv}
Let $u_{\ssy\max}:=\max_{\ssy\QQ}|\uu|$,
$g_{\ssy\max}:=\max_{\ssy\QQ}|g(|\uu|^2)|$
and $\ddelta\ge\max\{u_{\ssy\max},g_{\ssy\max}\}$.
Then, there exist positive constants ${\sf C}^{\ssy{\sf A}}_{\ssy\ddelta}$,
${\sf C}^{\ssy{\sf B}}_{\ssy\ddelta}$ and ${\sf C}_{\ssy\ddelta}^{\ssy{\sf C}}$,
independent of $\tau$ and $h$, such that: if
$\tau\,{\sf C}^{\ssy{\sf A}}_{\ssy\ddelta}\leq\tfrac{1}{2}$, then
\begin{equation}\label{mod_BR_cnv_1}
\|u^{\half}-V_{\ddelta}^{\half}\|_{1,h}
+\max_{0\leq{m}\leq{\ssy N}}\|\uu^m-V_{\ddelta}^m\|_{1,h}
\leq\,{\sf C}_{\ssy\ddelta}^{\ssy{\sf B}}\,(\tau^2+h^2)
\end{equation}
and
\begin{equation}\label{mod_BR_cnv_2}
\max_{0\leq{m}\leq{\ssy N-1}}\|g(\uu^{m+\half})-\Phi_{\ddelta}^{m+\half}\|_{1,h}
\leq\,{\sf C}_{\ssy\ddelta}^{\ssy{\sf C}}\,(\tau^2+h^2).
\end{equation}
\end{thm}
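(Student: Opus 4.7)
The plan is to proceed by strong induction on $m$, maintaining simultaneously the optimal $H^1$-bounds on both $e^k := u^k - V^k_{\ddelta}$ and $E^{k+\half} := g(|u^{k+\half}|^2) - \Phi^{k+\half}_{\ddelta}$ for all $k \leq m$, together with the auxiliary $L^\infty$-control $|V^k_{\ddelta}|_{\infty,h}, |\Phi^{k+\half}_{\ddelta}|_{\infty,h} \leq \ddelta$. The latter, combined with $|u^k|_{\infty,h} \leq u_{\ssy\max} \leq \ddelta$, guarantees via \eqref{MLF_3} that $\gamma_{\ddelta}$ and $\gff_{\ddelta}$ act as the identity on every quantity appearing in the error equations; equivalently, on the inductive set the modified scheme coincides with (RFD). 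This $L^\infty$-control is recovered at the end from the $H^1$-bound and the embedding \eqref{LH1}, provided the time-step is small (as encoded by $\tau\,{\sf C}^{\ssy\sf A}_{\ssy\ddelta} \leq \tfrac{1}{2}$). For the base $m=0$, \eqref{ELP_5} gives $\|e^0\|_{1,h} \leq Ch^2$. For $e^{\half}$, I would subtract \eqref{modCB1a} from \eqref{NORAD_00} and test the resulting identity with $(e^{\half}+e^0)/2$ in $(\cdot,\cdot)_{0,h}$; taking real parts kills the skew-symmetric $i\,\Delta_h$-term via \eqref{NewEra2} and the real-valued $i\,g(|u^0|^2)\otimes$-term, leaving the consistency residual $\tau{\sf r}^{\quarter}$, which by \eqref{SHELL_01H1} is $O(\tau^2)$, plus an elliptic-projection remainder of size $O(\tau h^2)$; the parallel $H^1$-half uses the norm-preserving operator $B_h$ (eq. \eqref{Megatree5}).

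For the inductive step the two bounds must be proved jointly. Subtracting \eqref{modCB3b} from \eqref{NORAD_02} and using mollifier inactivity yields
\begin{equation*}
e^{m+1} - e^m = i\tau\,\Delta_h\!\left(\tfrac{e^{m+1}+e^m}{2}\right) + i\tau\,\Phi^{m+\half}_{\ddelta}\otimes\tfrac{e^{m+1}+e^m}{2} + i\tau\, E^{m+\half}\otimes\tfrac{u^{m+1}+u^m}{2} + \tau\,{\sf r}^{m+\half} + \rho^m,
\end{equation*}
with $\rho^m$ an elliptic-projection residue of size $O(\tau h^2)$. Combining \eqref{modCB3a} with the Taylor identity \eqref{NORAD_22} gives the key recursion
\begin{equation*}
E^{m+\half} + E^{m-\half} = 2\,{\sf r}^m + 2\,G^m, \qquad G^m := g(|u^m|^2) - g\!\left(|\gamma_{\ddelta}(V^m_{\ddelta})|^2\right),
\end{equation*}
which, iterated against the base value $E^{\half} = G^{\half}$, yields $E^{m+\half} = (-1)^m E^{\half} + 2\sum_{k=1}^m (-1)^{m-k}({\sf r}^k + G^k)$. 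Lemmas \ref{LemmaX2}--\ref{LemmaX4} allow $\|G^m\|_{1,h}$ and the second differences $\|G^k - G^{k-1}\|_{1,h}$ to be bounded in terms of $\|e^j\|_{1,h}$ and the $O(\tau)$ smoothness of $u$. Testing the $e$-equation with $(e^{m+1}+e^m)/2$ in both $L^2$ and $H^1$ (the $H^1$-half via $-\Delta_h$ of the test function, or equivalently via the $B_h$-reformulation of Lemma \ref{operator_lemma}), substituting the Abel-summed expansion for $E^{m+\half}$ into the $E$-term, and applying discrete Gronwall then yields the coupled bound $\|e^{m+1}\|_{1,h} + \|E^{m+\half}\|_{1,h} \leq C(\tau^2 + h^2)$. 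The smallness $\tau\,{\sf C}^{\ssy\sf A}_{\ssy\ddelta}\leq\tfrac{1}{2}$ is used exactly to absorb the implicit $\|e^{m+1}\|_{1,h}$ on the RHS into the LHS.

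The main obstacle is extracting the second-order rate from the alternating-sign sum $\sum (-1)^{m-k} G^k$: a naive triangle bound only yields $O(1)$, while a simple pairing $(G^{2j}-G^{2j-1})$ requires a third-order estimate $\|G^k - G^{k-1}\|_{1,h} \leq C\tau(\tau^2 + h^2)$ that Lemma \ref{LemmaX4}~\eqref{Jalapeno21} does not deliver directly, since it produces a term proportional to $|e^k-e^{k-1}|_{1,h}$ whose only a priori control is $O(\tau^2+h^2)$ rather than $O(\tau(\tau^2+h^2))$. The resolution, in the spirit of the Besse-type stability framework of \cite{Besse2}, is to perform the Abel summation \emph{inside} the energy estimate for $e^{m+1}$ rather than at the level of $E^{m+\half}$ alone, so that each problematic difference $E^{k+\half}-E^{k-\half}$ generated by the recursion is paired multiplicatively with an $O(\tau)$ factor $u^{k+1}-u^k$; the refined third-order consistency bounds \eqref{SHELL_22H1} and \eqref{SHELL_03H1} on $\|{\sf r}^k - {\sf r}^{k-1}\|_{1,h}$ and $\|{\sf r}^{k+\half} - {\sf r}^{(k-1)+\half}\|_{1,h}$, and the operator identities of Lemma \ref{operator_lemma}, are precisely what make this cancellation viable. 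Closing the induction at the end uses \eqref{LH1} to return to the $L^\infty$-hypothesis on the numerical approximations.
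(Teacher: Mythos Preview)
Your induction framework, carrying $|V^k_{\ddelta}|_{\infty,h},|\Phi^{k+\half}_{\ddelta}|_{\infty,h}\leq\ddelta$ as part of the hypothesis, is a different route from the paper and in fact works against the design of the modified scheme: the whole point of the mollifiers is that the bounds $|\gff_{\ddelta}(\Phi^{n+\half}_{\ddelta})|_{\infty,h}\leq 2\ddelta$ and $|\gamma_{\ddelta}(\cdot)|_{\infty,h}\leq 2\sqrt{2}\,\ddelta$ hold unconditionally by \eqref{MLF_4}, so no $L^\infty$-induction is needed. The paper keeps the mollifiers active throughout the analysis and never removes them; removal happens only afterwards, in Theorem~\ref{DR_Final}. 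Your framework is not wrong in principle, but it makes the mollified scheme redundant and adds an extra layer that the paper avoids.

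The substantive gap is in your handling of the alternating-sign accumulation. You correctly identify that a naive bound on $\sum_{k}(-1)^{m-k}G^k$ fails and that one needs control on $|e^{k}-e^{k-1}|_{1,h}$ of order $\tau(\tau^2+h^2)$ rather than just $\tau^2+h^2$. But the resolution you sketch---``Abel summation inside the energy estimate so that $E^{k+\half}-E^{k-\half}$ is paired with $u^{k+1}-u^k$''---does not close: after testing with $(e^{m+1}+e^m)$ and summing in $m$, the Abel-summed terms still carry the factor $e^{m+1}+e^m$, which is \emph{not} slowly varying in $m$, so the claimed $O(\tau)$ gain does not materialise. The paper's mechanism is quite different and is the heart of the proof (Parts~7--10 and 13--15): it introduces the discrete time-derivative $\partial\vq^n:=(\vq^n-\vq^{n-1})/\tau$ of the \emph{elliptic-projection corrected} error $\vq^n:={\sf R}_h[u(t_n,\cdot)]-V^n_{\ddelta}$, derives the second-order recurrence \eqref{XABA_0} for $\partial\vq^{n+1}-\partial\vq^{n-1}$, rewrites it as a first-order vector recursion with transfer matrix $G$, diagonalises $G$ explicitly (using $B_h=A_h^{-1}T_h$ and the identities of Lemma~\ref{operator_lemma}), and obtains the representation formulas \eqref{AGALAB_4}--\eqref{AGALAB_5}. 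Since $\|B_h^{\kappa}\|=1$ (\eqref{Megatree1}, \eqref{Megatree5}), this yields a bound on $\|\partial\vq^{m}\|$ by a sum of source terms, which is exactly the missing third-order information $\|\vq^m-\vq^{m-1}\|=O(\tau(\tau^2+h^2))$ you were seeking. This $\partial\vq$-bound is then coupled with the recursion \eqref{ABBA_10} for $\emid^n-\emid^{n-2}$ and a standard Gronwall in the joint quantity \eqref{gammaerror}. The argument is run first in $\|\cdot\|_{0,h}$ to produce \eqref{XE2019_5}, which is then fed back (Parts~11--15) to obtain the $|\cdot|_{1,h}$ estimates. An Abel-type summation of the kind you describe \emph{does} appear in the paper, but only in Theorem~\ref{CNV_Final_2}, where the suboptimal preliminary bound \eqref{kourelou_2} is already available to control the boundary and cross terms.
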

%
%
%
\begin{proof}
To simplify the notation, we set
$\emid^{m}:=g(|\uu^{m+\half}|^2)-\Phi_{\ddelta}^{m+\half}\in\rspacez$
for $m=0,\dots,N-1$,
$e^{\half}:=u^{\half}-V_{\ddelta}^{\half}\in\dspace$,
$\varrho^{\half}:=u^{\half}-{\sf R}_h[u(t^{\half},\cdot)]\in\dspace$,
$\vq^{\half}:={\sf R}_h[u(t^{\half},\cdot)]-V_{\ddelta}^{\half}\in\dspace$,
$\varrho^m:=\uu^m-{\sf R}_h[u(t_m,\cdot)]\in\dspace$,
$\vq^m:={\sf R}_h[u(t_m,\cdot)]-V_{\ssy\ddelta}^m\in\dspace$ and
$e^m:=\uu^m-V_{\ddelta}^m\in\dspace$ for $m=0,\dots,N$,
and $\partial\vq^{m}:=\frac{\vq^{m}-\vq^{m-1}}{\tau}\in\dspace$ for $m=1,\dots,N$.
Also, we note that  $e^{\half}=\varrho^{\half}+\vq^{\half}$,
$e^m=\varrho^m+\vq^m$ for $m=0,\dots,N$, and that due
to \eqref{modCB1} we have $\vq^0=0$.
\par
In the sequel,  we will use the symbol $C$ to denote a generic constant that is
independent of $\tau$, $h$ and $\ddelta$,  and may changes value from one line
to the other. Also, we will use the symbol $C_{\ddelta}$ (with or without additional
symbols) to denote a generic constant that depends on $\ddelta$ but is independent
of $\tau$, $h$, and may changes value from one line to the other. We note that
the constants $C$ and $C_{\ddelta}$ may depend on the solution $u$ and its
derivatives.
%
%
\par\noindent\vskip0.2truecm\par\noindent
$\boxed{{\tt Part\,\,\, 1}}:$ 
Combining \eqref{modCB1}, \eqref{modCB1a}, \eqref{NORAD_00}
and \eqref{ELP_1}, we get the following error equation:
\begin{equation}\label{LOX_NES_1}
\vq^{\half}-\vq^0-\iii\,\tfrac{\tau}{4}\,\Delta_h(\vq^{\half}+\vq^0)
=\tau\,\sum_{\ell=1}^{3}{\mathcal A}^{\ell},
\end{equation}
where
\begin{equation*}
\begin{split}
{\mathcal A}^{1}:=&\,\tfrac{1}{2}
\,\left(\,{\sf R}_h\left[\tfrac{\uu(t^{\half},\cdot)-\uu(t_0,\cdot)}{(\tau/2)}\right]
-\tfrac{\uu^{\half}-\uu^0}{(\tau/2)}\,\right)\in\dspace,\\
{\mathcal A}^{2}:=&\,\tfrac{1}{2}\,{\sf r}^{\frac{1}{4}}\in\dspace,\\
{\mathcal A}^{3}:=&\,\tfrac{\iii}{4}\,g\big(|u^0|^2\big)
\otimes(e^{\half}+e^0)\in\dspace.\\
\end{split}
\end{equation*}
%
%
\par\noindent\vskip0.2truecm\par\noindent
$\boxed{{\tt Part\,\,\, 2}}:$ 
Since $\vq^0=0$, after taking the $(\cdot,\cdot)_{0,h}-$inner product
of \eqref{LOX_NES_1} with $\vq^{\half}$, and then using
\eqref{NewEra2} and keeping the real parts of the relation obtained,
we arrive at
\begin{equation}\label{Les_Halles_0}
\|\vq^{\half}\|_{0,h}^2=\tau\,
\sum_{\ell=1}^3\Ree[({\mathcal A}^{\ell},\vq^{\half})_{0,h}].
\end{equation}
Using \eqref{ELP_6} and \eqref{SHELL_01}, we obtain
\begin{equation}\label{Les_Halles_1a}
\begin{split}
\|{\mathcal A}^1\|_{0,h}\leq&\,\tfrac{1}{2}\,
\left\|\,{\sf R}_h\left[\tfrac{\uu(t^{\half},\cdot)-\uu(t_0,\cdot)}{(\tau/2)}\right]
-\tfrac{\uu^{\half}-\uu^0}{(\tau/2)}\,\right\|_{0,h}\\
\leq&\,C\,h^2\\
\end{split}
\end{equation}
and
\begin{equation}\label{Les_Halles_1b}
\begin{split}
\|{\mathcal A}^{2}\|_{0,h}
\leq&\,\tfrac{1}{2}\,\|{\sf r}^{\frac{1}{4}}\|_{0,h}\\
\leq&\,C\,\tau.\\
\end{split}
\end{equation}
Combining the Cauchy-Schwarz inequality with
\eqref{Les_Halles_1a} and \eqref{Les_Halles_1b}, we get
\begin{equation}\label{Les_Halles_1c}
\Ree[({\mathcal A}^{1},\vq^{\half})_{0,h}]
+\Ree[({\mathcal A}^{2},\vq^{\half})_{0,h}]
\leq\,C\,(\tau+h^2)\,\|\vq^{\frac{1}{2}}\|_{0,h}.
\end{equation}
Observing that
\begin{equation*}
\begin{split}
\Ree[({\mathcal A}^{3},\vq^{\half})_{0,h}]=&\,-\tfrac{1}{4}
\,\Imm[(g(|u^0|^2)\otimes(\varrho^{\half}+\vq^{\half}
+\varrho^0+\vq^0),\vq^{\half})_{0,h}]\\
=&\,-\tfrac{1}{4}
\,\Imm[(g(|u^0|^2)\otimes(\varrho^{\half}+\varrho^0),\vq^{\half})_{0,h}]
-\tfrac{1}{4}
\,\Imm[(g(|u^0|^2)\otimes\vq^{\half},\vq^{\half})_{0,h}]\\
=&\,-\tfrac{1}{4}
\,\Imm[(g(|u^0|^2)\otimes(\varrho^{\half}+\varrho^0),\vq^{\half})_{0,h}],\\
\end{split}
\end{equation*}
we use the Cauchy-Schwarz inequality and \eqref{ELP_5}, to get
\begin{equation}\label{Les_Halles_2}
\begin{split}
\Ree[({\mathcal A}^{3},\vq^{\half})_{0,h}]
\leq&\,C\,(\,\|\varrho^{\half}\|_{0,h}
+\|\varrho^0\|_{0,h}\,)
\,\|\vq^{\half}\|_{0,h}\\
\leq&\,C\,h^2\,\|\vq^{\half}\|_{0,h}.\\
\end{split}
\end{equation}
In view of \eqref{Les_Halles_0}, \eqref{Les_Halles_1c}
and \eqref{Les_Halles_2}, we, easily, conclude that
\begin{equation}\label{Les_Halles_3}
\|\vq^{\half}\|_{0,h}
\leq\,C\,(\tau^2+\tau\,h^2).
\end{equation}
\par
Taking the $(\cdot,\cdot)_{0,h}-$inner product
of \eqref{LOX_NES_1} with $\Delta_h(\vq^{\half})$, and then using
\eqref{NewEra1} and keeping the real parts of the relation
obtained, it follows that
\begin{equation}\label{Les_Halles_10}
|\vq^{\half}|_{1,h}^2=\tau\,
\sum_{\ell=1}^3\Ree[(\!\!(\delta_h{\mathcal A}^{\ell},\delta_h\vq^{\half})\!\!)_{0,h}].
\end{equation}
Using the Cauchy-Schwarz inequality, \eqref{ELP_6}, \eqref{SHELL_01H1}
and \eqref{ELP_5}, we have
\begin{equation}\label{Christmass_19a}
\begin{split}
\Ree[(\!\!(\delta_h{\mathcal A}^{1},\delta_h\vq^{\half})\!\!)_{0,h}]
+\Ree[(\!\!(\delta_h{\mathcal A}^{2},\delta_h\vq^{\half})\!\!)_{0,h}]
\leq&\,(|{\mathcal A}^1|_{1,h}
+|{\mathcal A}^2|_{1,h})\,|\vq^{\frac{1}{2}}|_{1,h}\\
\leq&\,C\,(h^2+\tau)\,|\vq^{\frac{1}{2}}|_{1,h}\\
\end{split}
\end{equation}
and
\begin{equation}\label{Christmass_19b}
\begin{split}
\Ree[(\!\!(\delta_h({\mathcal A}^3),\delta_h\vq^{\half})\!\!)_{0,h}]
=&\,-\tfrac{1}{4}
\,\Imm[(\!\!(\delta_h(g(|u^0|^2)\otimes(\varrho^{\half}+\varrho^0),
\delta_h\vq^{\half})\!\!)_{0,h}]\\
\leq&\,|g(|u^0|^2)\otimes(\varrho^{\half}+\varrho^0)|_{1,h}
\,|\vq^{\frac{1}{2}}|_{1,h}\\
\leq&\,\left[\,|g(|u^0|^2)|_{\infty,h}\,|\varrho^{\half}+\varrho^0|_{1,h}
+|\!|\!|\delta_h(g(|u^0|^2))|\!|\!|_{\infty,h}\,\|\varrho^{\half}+\varrho^0\|_{0,h}\,\right]
\,|\vq^{\frac{1}{2}}|_{1,h}\\
\leq&\,C\,(\|\varrho^{\half}\|_{1,h}+\|\varrho^0\|_{1,h})\,|\vq^{\frac{1}{2}}|_{1,h}\\
\leq&\,C\,h^2\,|\vq^{\frac{1}{2}}|_{1,h}.\\
\end{split}
\end{equation}
From \eqref{Les_Halles_10}, \eqref{Christmass_19a}
and \eqref{Christmass_19b}, it follows that
\begin{equation}\label{Les_Halles_11}
|\vq^{\half}|_{1,h}\leq\,C\,(\tau^2+\tau\,h^2).
\end{equation}
\par
Since $\ddelta\ge u_{\ssy\max}$, we use \eqref{modCB1b}, \eqref{MLF_3},
\eqref{MLF_4}, \eqref{BasicHot1} (with ${\mathfrak g}=g$ and
$\varepsilon=2\,\sup_{\ssy\rset}|\mol_{\ddelta}|^2$),
\eqref{Jalapeno1} and \eqref{BasicHot5}, to have
\begin{equation*}
\begin{split}
\|\emid^0\|_{0,h}=&\,\big\|g(|\gamma_{\ddelta}(u^{\half})|^2)
-g(|\gamma_{\ddelta}(V_{\ddelta}^{\half})|^2)\big\|_{0,h}\\
\leq&\,C_{\ddelta}\,
\,\big\|\,|\gamma_{\ddelta}(u^{\half})|^2
-|\gamma_{\ddelta}(V_{\ddelta}^{\half})|^2\,\big\|_{0,h}\\
\leq&\,C_{\ddelta}\,\left[|\gamma_{\ddelta}(u^{\half})|_{\infty,h}
+|\gamma_{\ddelta}(V_{\ddelta}^{\half})|_{\infty,h}\right]
\,\|\gamma_{\ddelta}(u^{\half})-\gamma_{\ddelta}(V_{\ddelta}^{\half})\|_{0,h}\\
\leq&\,C_{\ddelta}
\,\|\gamma_{\ddelta}(u^{\half})-\gamma_{\ddelta}(V_{\ddelta}^{\half})\|_{0,h}\\
\leq&\,C_{\ddelta}\,\|e^{\half}\|_{0,h}\\
\leq&\,C_{\ddelta}\,(\|\varrho^{\half}\|_{0,h}+\|\vq^{\half}\|_{0,h}),\\
\end{split}
\end{equation*}
which, along with \eqref{Les_Halles_3} and \eqref{ELP_5}, yields
\begin{equation}\label{Les_Halles_4}
\|\emid^0\|_{0,h}\leq\,C_{\ddelta}\,(\tau^2+h^2).
\end{equation}
Also, we use \eqref{MLF_4}, \eqref{BasicHot2} (with ${\mathfrak g}=g$ and
$\varepsilon=2\,\sup_{\ssy\rset}|\mol_{\ddelta}|^2$), \eqref{MLF_3},
\eqref{dpoincare}, \eqref{Jalapeno12}, \eqref{BasicHot5b}
and \eqref{LH1}, to get
\begin{equation*}
\begin{split}
|\emid^0|_{1,h}=&\,\big|g(|\gamma_{\ddelta}(V_{\ddelta}^{\half})|^2)
-g(|\gamma_{\ddelta}(u^{\half})|^2)\big|_{1,h}\\
\leq&\,C_{\ddelta}\,\left[\,
\big|\,|\gamma_{\ddelta}(V_{\ddelta}^{\half})|^2
-|\gamma_{\ddelta}(u^{\half})|^2\big|_{1,h}
+|\!|\!|\delta_h(|u^{\half}|^2)|\!|\!|_{\infty,h}
\,\||\gamma_{\ddelta}(V_{\ddelta}^{\half})|^2
-|\gamma_{\ddelta}(u^{\half})|^2\big\|_{0,h}\,\right]\\
\leq&\,C_{\ddelta}\,
\,\big||\gamma_{\ddelta}(V_{\ddelta}^{\half})|^2
-|\gamma_{\ddelta}(u^{\half})|^2\big|_{1,h}\\
\leq&\,C_{\ddelta}\,[\,1+|\gamma_{\ddelta}(V_{\ddelta}^{\frac{1}{2}})|_{\infty,h}
+|\!|\!|\delta_h(u^{\frac{1}{2}})|\!|\!|_{\infty,h}\,]
\,|\gamma_{\ddelta}(V_{\ddelta}^{\frac{1}{2}})
-\gamma_{\ddelta}(u^{\frac{1}{2}})|_{1,h}\\
\leq&\,C_{\ddelta}\,|\gamma_{\ddelta}(V_{\ddelta}^{\frac{1}{2}})
-\gamma_{\ddelta}(u^{\frac{1}{2}})|_{1,h}\\
\leq&\,C_{\ddelta}\,(1+|\!|\!|\delta_h(u^{\frac{1}{2}})|\!|\!|_{\infty,h})
\,|e^{\half}|_{1,h}\\
\leq&\,C_{\ddelta}\,
(|\varrho^{\half}|_{1,h}+|\vq^{\half}|_{1,h}),\\
\end{split}
\end{equation*}
which, along with \eqref{Les_Halles_11}
and \eqref{ELP_5}, yields
\begin{equation}\label{Les_Halles_4H1}
|\emid^0|_{1,h}\leq\,C_{\ddelta}\,(\tau^2+h^2).
\end{equation}
%
%
%
%
%
$\boxed{{\tt Part\,\,\,3}:}$
Since $\ddelta\ge\max\{u_{\ssy\max},g_{\ssy\max}\}$,
using \eqref{modCB2}, \eqref{modCB3b}, \eqref{NORAD_02}
and \eqref{ELP_1}, we get
\begin{equation}\label{ABBA_0}
\vq^{n+1}-\vq^n-\iii\,\tfrac{\tau}{2}\,\Delta_h(\vq^{n+1}+\vq^n)
=\tau\,\sum_{\ell=1}^{4}{\mathcal B}^{\ell,n},\quad n=0,\dots,N-1,
\end{equation}
where
\begin{equation}\label{Beta_Defs}
\begin{split}
{\mathcal B}^{1,n}:=&\,\left(\,
{\sf R}_h\left[\tfrac{\uu(t_{n+1},\cdot)-\uu(t_n,\cdot)}{\tau}\right]
-\tfrac{\uu^{n+1}-\uu^n}{\tau}\,\right)\in\dspace,\\
{\mathcal B}^{2,n}:=&\,{\sf r}^{n+\half}\in\dspace,\\
{\mathcal B}^{3,n}:=&\,\iii\,\gff_{\ddelta}\big(\Phi_{\ddelta}^{n+\half}\big)
\otimes\left[\gamma_{\ddelta}\left(\tfrac{u^{n+1}+u^n}{2}\right)-\gamma_{\ddelta}
\left(\tfrac{V_{\ddelta}^{n+1}+V_{\ddelta}^n}{2}\right)\right]\in\dspace,\\
{\mathcal B}^{4,n}:=&\,\iii\,\left[\,
\gff_{\ddelta}\big(g(|\uu^{n+\half}|^2)\big)
-\gff_{\ddelta}\big(\Phi_{\ddelta}^{n+\half}\big)
\,\right]\otimes\tfrac{\uu^{n+1}+\uu^n}{2}\in\dspace\cdot\\
\end{split}
\end{equation}
%
%
%
%
%
%
\par\noindent\vskip0.2truecm\par\noindent
$\boxed{{\tt Part\,\,\,4}}:$ 
First, take the $(\cdot,\cdot)_{0,h}-$inner product of \eqref{ABBA_0}
with $(\vq^{n+1}+\vq^n)$, and then use \eqref{NewEra2}, keep
the real parts of the relation obtained and use the Cauchy-Schwarz inequality,
to arrive at
\begin{equation}\label{ABBA_2}
\|\vq^{n+1}\|_{0,h}-\|\vq^n\|_{0,h}
\leq\,\tau\,\sum_{\ell=1}^4\|{\mathcal B}^{\ell,n}\|_{0,h},
\,\quad n=0,\dots,N-1.
\end{equation}
\par
Let $n\in\{0,\dots,N-1\}$. In view of \eqref{ELP_6} and \eqref{SHELL_02},
we have
\begin{equation}\label{ABBA_3}
\|{\mathcal B}^{1,n}\|_{0,h}+\|{\mathcal B}^{2,n}\|_{0,h}
\leq\,C\,(\tau^2+h^2).
\end{equation}
After applying \eqref{BasicHot5}, \eqref{ELP_5}
and the mean value theorem, we get
\begin{equation}\label{ABBA_4}
\begin{split}
\|{\mathcal B}^{3,n}\|_{0,h}\leq&\,C_{\ddelta}
\,\left\|\gamma_{\ddelta}\left(\tfrac{u^{n+1}+u^n}{2}\right)
-\gamma_{\ddelta}\left(\tfrac{V_{\ddelta}^{n+1}+V_{\ddelta}^n}{2}\right)
\right\|_{0,h}\\
\leq&\,C_{\ddelta}\,\|e^{n+1}+e^n\|_{0,h}\\
\leq&\,C_{\ddelta}\,(\|\vq^{n+1}\|_{0,h}+\|\vq^n\|_{0,h}+\|\varrho^{n+1}\|_{0,h}
+\|\varrho^n\|_{0,h})\\
\leq&\,C_{\ddelta}\,(h^2+\|\vq^{n+1}\|_{0,h}+\|\vq^n\|_{0,h})
\end{split}
\end{equation}
and
\begin{equation}\label{ABBA_5}
\begin{split}
\|{\mathcal B}^{4,n}\|_{0,h}\leq&\,C\,
\left\|\gff_{\ddelta}\big(g(|\uu^{n+\half}|^2)\big)
-\gff_{\ddelta}\big(\Phi_{\ddelta}^{n+\half}\big)\right\|_{0,h}\\
\leq&\,C_{\ddelta}\,\|\emid^n\|_{0,h}.\\
\end{split}
\end{equation}
\par
From \eqref{ABBA_2}, \eqref{ABBA_3}, \eqref{ABBA_4} and \eqref{ABBA_5},
follows that
\begin{equation}\label{ABBA_6}
\|\vq^{n+1}\|_{0,h}-\|\vq^n\|_{0,h}\leq\,C_{\ddelta}\,\tau\,
\big(\,\tau^2+h^2+\|\emid^n\|_{0,h}
+\|\vq^{n+1}\|_{0,h}+\|\vq^n\|_{0,h}\,\big)
\end{equation}
for $n=0,\dots,N-1$.
%
%
%
\par\noindent\vskip0.2truecm\par\noindent
$\boxed{{\tt Part\,\,\,5}}:$ 
Since $\ddelta\ge u_{\ssy\max}$, subtracting \eqref{modCB3a} from \eqref{NORAD_22}
and using \eqref{MLF_3}, we obtain
\begin{equation}\label{ABBA_7}
\emid^{n}+\emid^{n-1}
=2\,\left[\,g(|\gamma_{\ddelta}(u^n)|^2)-g(|\gamma_{\ddelta}(V_{\ddelta}^n)|^2)\right]
+2\,{\sf r}^n,\quad n=1,\dots,N-1,
\end{equation}
which, easily, yields that
\begin{equation}\label{ABBA_8}
\emid^{n}-\emid^{n-2}
=2\,\sigma^n_{\ddelta}+2\,({\sf r}^n-{\sf r}^{n-1}),\quad n=2,\dots,N-1,
\end{equation}
where $\sigma^n_{\ddelta}\in\dspace$ defined by
\begin{equation}\label{sigma_def}
\sigma^n_{\ddelta}:=-\left[\,g(|\gamma_{\ddelta}(V_{\ddelta}^n)|^2)
-g(|\gamma_{\ddelta}(V_{\ddelta}^{n-1})|^2)
-g(|\gamma_{\ddelta}(u^n)|^2)+g(|\gamma_{\ddelta}(u^{n-1})|^2)\,\right].
\end{equation}
\par
Applying \eqref{MLF_4}, \eqref{MLF_3}, \eqref{BasicHot3} with (${\mathfrak g}=g$
and $\varepsilon=2\,\sup_{\ssy\rset}\mol_{\ddelta}^2$),
\eqref{Jalapeno2}, \eqref{Jalapeno1},
\eqref{BasicHot5} and \eqref{BasicHot6}, we obtain
\begin{equation*}
\begin{split}
\|\sigma^n_{\ddelta}\|_{0,h}
\leq&\,C_{\ddelta}\,\left[\,\||\gamma_{\ddelta}(V_{\ddelta}^n)|^2
-|\gamma_{\ddelta}(V_{\ddelta}^{n-1})|^2-|\gamma_{\ddelta}(u^n)|^2
+|\gamma_{\ddelta}(u^{n-1})|^2\|_{0,h}\right.\\
&\hskip1.5truecm\left.+\,\big||u^n|^2-|u^{n-1}|^2\big|_{\infty,h}
\,\left\|\,|\gamma_{\ddelta}(V^{n-1}_{\ddelta})|^2-|\gamma_{\ddelta}(u^{n-1})|^2\,\right\|_{0,h}\right]\\
\leq&\,C_{\ddelta}\,\left[\,\left\|\,\gamma_{\ddelta}(V_{\ddelta}^n)
-\gamma_{\ddelta}(V_{\ddelta}^{n-1})-\gamma_{\ddelta}(u^n)
+\gamma_{\ddelta}(u^{n-1})\,\right\|_{0,h}\right.\\
&\,\quad\quad\quad+|u^n-u^{n-1}|_{\infty,h}\,\|\gamma_{\ddelta}(V^{n-1}_{\ddelta})
-\gamma_{\ddelta}(u^{n-1})\|_{0,h}\\
&\quad\quad\quad
\left.+\,\tau\,
\,\left\|\,\gamma_{\ddelta}(V^{n-1}_{\ddelta})-\gamma_{\ddelta}(u^{n-1})\,\right\|_{0,h}\right]\\
\leq&\,C_{\ddelta}\,\left(\,\|e^n-e^{n-1}\|_{0,h}
+\tau\,\|e^{n-1}\|_{0,h}\,\right)\\
\leq&\,C_{\ddelta}\,\left(\,\|\varrho^n-\varrho^{n-1}\|_{0,h}
+\|\vq^n-\vq^{n-1}\|_{0,h}\right.\\
&\,\hskip2.0truecm\left.
+\tau\,\|\varrho^{n-1}\|_{0,h}+\tau\,\|\vq^{n-1}\|_{0,h}\,\right),
\quad n=2,\dots,N-1,\\
\end{split}
\end{equation*}
which, along with \eqref{ELP_5} and \eqref{ELP_6}, yields
\begin{equation}\label{ABBA_9}
\|\sigma^n_{\ddelta}\|_{0,h}\leq\,C_{\ddelta}\,\tau\,(h^2
+\|\partial\vq^n\|_{0,h}+\|\vq^{n-1}\|_{0,h}),
\quad n=2,\dots,N-1.
\end{equation}
Taking the $(\cdot,\cdot)_{0,h}$ inner product of
both sides of \eqref{ABBA_8}
with $(\emid^{n}+\emid^{n-2})$, and then the
Cauchy-Schwarz inequality, it follows that
\begin{equation*}
\|\emid^{n}\|_{0,h}^2-\|\emid^{n-2}\|^2_{0,h}\leq\,2\,
\left(\,\|\sigma^n_{\ddelta}\|_{0,h}
+\|{\sf r}^n-{\sf r}^{n-1}\|_{0,h}\,\right)
\,\|\emid^{n}+\emid^{n-2}\|_{0,h},\quad
n=2,\dots,N-1,
\end{equation*}
which, along with \eqref{SHELL_22} and \eqref{ABBA_9}, yields
\begin{equation}\label{ABBA_10}
\|\emid^{n}\|_{0,h}-\|\emid^{n-2}\|_{0,h}
\leq C_{\ddelta}\,\tau\,\left(\,\|\partial\vq^n\|_{0,h}
+\|\vq^{n-1}\|_{0,h}
+\tau^2+h^2\right),\quad n=2,\dots,N-1.
\end{equation}
%
%
\par\noindent\vskip0.2truecm\par\noindent
$\boxed{{\tt Part\,\,\,6}}:$ 
Since $\vq^0=0$, after setting $n=0$ in \eqref{ABBA_6},
we conclude that there exists a constant
${\sf C}_{1,\ddelta}>0$ such that
\begin{equation*}
\|\vq^1\|_{0,h}\leq\,{\sf C}_{1,\ddelta}\,\tau\,\left(\,\tau^2+h^2
+\|\emid^0\|_{0,h}+\|\vq^1\|_{0,h}\right).
\end{equation*}
Assuming that $\tau\,{\sf C}_{1,\ddelta}\leq\tfrac{1}{2}$, the latter inequality
yields
\begin{equation}\label{inest_1}
\|\vq^1\|_{0,h}\leq\,C_{\ddelta}\,\tau\,(\tau^2+h^2+\|\emid^0\|_{0,h}).
\end{equation}
Setting $n=1$ in \eqref{ABBA_7}, and using \eqref{SHELL_21},
\eqref{BasicHot1} (with ${\mathfrak g}=g$ and
$\varepsilon=2\,\sup_{\ssy\rset}|\gff_{\ddelta}|^2$),
\eqref{MLF_4}, \eqref{Jalapeno1} and \eqref{BasicHot5}, we have
\begin{equation*}
\begin{split}
\|\emid^1\|_{0,h}\leq&\,2\,\big\|g(|\gamma_{\ddelta}(u^{1})|^2)
-g\big(|\gamma_{\ddelta}(V_{\ddelta}^{1})|^2\big)\big\|_{0,h}
+\|\emid^0\|_{0,h}+C\,\tau^2\\
\leq&\,C\,\left[
\,\big\|\,|\gamma_{\ddelta}(u^{1})|^2
-|\gamma_{\ddelta}(V_{\ddelta}^{1})|^2\,\big\|_{0,h}
+\tau^2+\|\emid^0\|_{0,h}\,\right]\\
\leq&\,C_{\ddelta}\,\left[
\,\big\|\gamma_{\ddelta}(u^{1})-\gamma_{\ddelta}(V_{\ddelta}^{1})\big\|_{0,h}
+\tau^2+\|\emid^0\|_{0,h}\,\right]\\
\leq&\,C_{\ddelta}\,\left(\|e^1\|_{0,h}+\tau^2+\|\emid^0\|_{0,h}\right),\\
\end{split}
\end{equation*}
which, along with \eqref{ELP_5} and \eqref{inest_1}, yields
\begin{equation}\label{inest_2}
\begin{split}
\|\emid^1\|_{0,h}\leq&\,C_{\ddelta}\,(\|\varrho^1\|_{0,h}+\|\vq^1\|_{0,h}
+\tau^2+h^2+\|\emid^0\|_{0,h})\\
\leq&\,C_{\ddelta}\,(\tau^2+h^2+\|\emid^0\|_{0,h}).\\
\end{split}
\end{equation}
Now, set $n=1$ in \eqref{ABBA_6} and then use \eqref{inest_1}
and \eqref{inest_2}, to conclude that there exists a constant
${\sf C}_{2,\ddelta}\ge{\sf C}_{1,\ddelta}$ such that
\begin{equation*}
\|\vq^2\|_{0,h}\leq\,{\sf C}_{2,\ddelta}\,\tau\,(\|\vq^2\|_{0,h}+\tau^2+h^2+\|\emid^0\|_{0,h}),
\end{equation*}
from which, after assuming that $\tau\,{\sf C}_{2,\ddelta}\leq\tfrac{1}{2}$, we obtain
\begin{equation}\label{inest_3}
\|\vq^2\|_{0,h}\leq\,C_{\ddelta}\,\tau\,(\tau^2+h^2+\|\emid^0\|_{0,h}).
\end{equation}
\par  
Since $\vq^0=0$, we use \eqref{ABBA_0} (with $n=0$),
\eqref{ABBA_3}, \eqref{ABBA_4}, \eqref{ABBA_5}
and \eqref{inest_1}, to get
\begin{equation}\label{inest_4}
\begin{split}
\|A_h(\partial\vq^1)\|_{0,h}=&\,\tau^{-1}\,\|A_h(\vq^1)\|_{0,h}\\
\leq&\,\sum_{\ell=1}^4\|{\mathcal B}^{\ell,0}\|_{0,h}\\
\leq&\,C_{\ddelta}\,(\tau^2+h^2+\|\emid^0\|_{0,h}).\\
\end{split}
\end{equation}
Finally, in view of \eqref{ABBA_0} (with $n=1$), \eqref{inest_4}, 
\eqref{ABBA_3}, \eqref{ABBA_4}, \eqref{ABBA_5}, \eqref{inest_2},
\eqref{inest_3} and \eqref{inest_1}, we obtain
\begin{equation}\label{inest_5}
\begin{split}
\|A_h(\partial\vq^2)\|_{0,h}=&\,\tau^{-1}\,\left[\,
\|A_h(\vq^2)\|_{0,h}+\|A_h(\vq^1)\|_{0,h}\,\right]\\
\leq&\,C_{\ddelta}\tau^{-1}\,\left[\,\|T_h(\vq^1)\|_{0,h}
+\tau\,\sum_{\ell=1}^4\|{\mathcal B}^{\ell,1}\|_{0,h}+\tau\,(\tau^2+h^2+\|\emid^0\|_{0,h})\,\right]\\
\leq&\,C_{\ddelta}\tau^{-1}\,\left[\,\|2\,\vq^1-A_h(\vq^1)\|_{0,h}
+\tau\,(\tau^2+h^2+\|\emid^0\|_{0,h})\,\right]\\
\leq&\,C_{\ddelta}\tau^{-1}\,\left[\,2\,\|\vq^1\|_{0,h}+\|A_h(\vq^1)\|_{0,h}
+\tau\,(\tau^2+h^2+\|\emid^0\|_{0,h})\,\right]\\
\leq&\,C_{\ddelta}\,(\tau^2+h^2+\|\emid^0\|_{0,h}).\\
\end{split}
\end{equation}
%
%
\par\noindent\vskip0.2truecm\par\noindent
$\boxed{{\tt Part\,\,\,7}}:$ 
Since $\ddelta>\max\{g_{\ssy\max},u_{\ssy\max}\}$,
from \eqref{ABBA_0}, we, easily, conclude that
\begin{equation}\label{XABA_0}
\partial\vq^{n+1}-\partial\vq^{n-1}=\iii\,\tfrac{\tau}{2}
\,\Delta_h(\partial\vq^{n+1}+2\,\partial\vq^{n}
+\partial\vq^{n-1})
+\sum_{\ell=1}^6{\sf\Gamma}^{\ell,n},
\quad n=2,\dots,N-1,
\end{equation}
where
\begin{equation*}
\begin{split}
{\sf\Gamma}^{1,n}:=&\,
{\sf R}_h\left[\tfrac{\uu(t_{n+1},\cdot)-\uu(t_n,\cdot)-u(t_{n-1},\cdot)+u(t_{n-2},\cdot)}{\tau}\right]
-\tfrac{\uu^{n+1}-\uu^n-u^{n-1}+\uu^{n-2}}{\tau},\\
{\sf\Gamma}^{2,n}:=&\,{\sf r}^{n+\half}-{\sf r}^{(n-2)+\half},\\
{\sf\Gamma}^{3,n}:=&\,-\iii\,\mol_{\ddelta}\big(\Phi_{\ddelta}^{(n-2)+\half}\big)
\otimes{\sf\Gamma}^{3,n}_{\star},\\
{\sf\Gamma}^{3,n}_{\star}:=&\,
\gamma_{\ddelta}\left(\tfrac{V^{n+1}_{\ddelta}+V_{\ddelta}^n}{2}\right)
-\gamma_{\ddelta}\left(\tfrac{V_{\ddelta}^{n-1}+V_{\ddelta}^{n-2}}{2}\right)
-\gamma_{\ddelta}\left(\tfrac{\uu^{n+1}+\uu^n}{2}\right)
+\gamma_{\ddelta}\left(\tfrac{\uu^{n-1}+\uu^{n-2}}{2}\right),\\
{\sf\Gamma}^{4,n}:=&\,-\iii\,\left[\,
g(|\uu^{n+\half}|^2)-g(|\uu^{(n-2)+\half}|^2)\,\right]
\otimes\left[\,\gamma_{\ddelta}\left(\tfrac{V_{\ddelta}^{n+1}+V_{\ddelta}^{n}}{2}\right)
-\gamma_{\ddelta}\left(\tfrac{\uu^{n+1}+\uu^{n}}{2}\right)\,\right],\\
{\sf\Gamma}^{5,n}:=&\,-\iii\,\left[
\mol_{\ddelta}\big(\Phi_{\ddelta}^{(n-2)+\half}\big)
-\mol_{\ddelta}(g(|\uu^{(n-2)+\half}|^2))\right]
\otimes\left(\tfrac{\uu^{n+1}+\uu^n-\uu^{n-1}-\uu^{n-2}}{2}\right),\\
{\sf\Gamma}^{6,n}:=&\,-\iii\,
\gamma_{\ddelta}\left(\tfrac{V_{\ddelta}^{n+1}+V_{\ddelta}^{n}}{2}\right)
\otimes{\sf\Gamma}^{6,n}_{\star},\\
{\sf\Gamma}^{6,n}_{\star}:=&\,\mol_{\ddelta}\big(\Phi_{\ddelta}^{n+\half}\big)
-\mol_{\ddelta}\big(\Phi_{\ddelta}^{(n-2)+\half}\big)
-\mol_{\ddelta}(g(|\uu^{n+\half}|^2))
+\mol_{\ddelta}(g(|\uu^{(n-2)+\half}|^2)).\\
\end{split}
\end{equation*}
%
%
%
\par\noindent\vskip0.2truecm\par\noindent
$\boxed{{\tt Part\,\,\,8}}:$ Let $n\in\{2,\dots,N-1\}$.
Observing that
\begin{equation}\label{Tzimi_0}
u(t_{n+1},x)-u(t_n,x)-u(t_{n-1},x)+u(t_{n-2},x)
=\int_{0}^{\tau}\left(\int_{t_{n-2}+s}^{t_{n}+s}u_{tt}(s',x)\;ds'\right)\;ds
\end{equation}
and using \eqref{ELP_5}, \eqref{SHELL_03}, \eqref{BasicHot5}
and the mean value theorem, we obtain
\begin{equation}\label{Tzimi_1}
\begin{split}
\|{\sf\Gamma}^{1,n}\|_{0,h}\leq&\,\tfrac{C}{\tau}
\,\int_{0}^{\tau}\left(\int_{t_{n-2}+s}^{t_{n}+s}
\big\|{\sf R}_h[u_{tt}(s',\cdot)]-\PP[u_{tt}(s',\cdot)]
\big\|_{0,h}\;ds'\right)\;ds\\
\leq&\,C\,\tau\,h^2,
\end{split}
\end{equation}
\begin{equation}\label{Tzimi_2}
\begin{split}
\|{\sf\Gamma}^{2,n}\|_{0,h}\leq&\,\|{\sf r}^{n+\half}-{\sf r}^{(n-1)+\half}\|_{0,h}
+\|{\sf r}^{(n-1)+\half}-{\sf r}^{(n-2)+\half}\|_{0,h}\\
\leq&\,C\,\tau^3,\\
\end{split}
\end{equation}
\begin{equation}\label{Tzimi_3}
\begin{split}
\|{\sf\Gamma}^{4,n}\|_{0,h}\leq&\,C_{\ddelta}\,\tau\,\|e^{n+1}+e^n\|_{0,h}\\
\leq&\,C_{\ddelta}\,\tau
\,(\|\vq^{n+1}\|_{0,h}+\|\vq^n\|_{0,h}+\|\varrho^{n+1}\|_{0,h}
+\|\varrho^n\|_{0,h})\\
\leq&\,C_{\ddelta}\,\tau\,(h^2+\|\vq^{n+1}\|_{0,h}+\|\vq^n\|_{0,h})\\
\end{split}
\end{equation}
and
\begin{equation}\label{Tzimi_4}
\begin{split}
\|{\sf\Gamma}^{5,n}\|_{0,h}\leq\,C_{\ddelta}\,\tau\,\|\emid^{n-2}\|_{0,h}.
\end{split}
\end{equation}
Also, we apply \eqref{BasicHot6}, \eqref{ELP_5} and \eqref{ELP_6}, to get
\begin{equation}\label{Tzimi_5}
\begin{split}
\|{\sf\Gamma}^{3,n}\|_{0,h}\leq&\,C_{\ddelta}\,\|{\sf\Gamma}_{\star}^{3,n}\|_{0,h}\\
\leq&\,C_{\ddelta}\,\left(\,\|e^{n+1}+e^n-e^{n-1}-e^{n-2}\|_{0,h}
+\tau\,\|e^{n+1}+e^{n}\|_{0,h}\right)\\
\leq&\,C_{\ddelta}\,\tau\,\left(\,\|\varrho^{n+1}\|_{0,h}
+\|\varrho^{n}\|_{0,h}+\|\vq^{n+1}\|_{0,h}+\|\vq^{n}\|_{0,h}\,\right)\\
&+C_{\ddelta}\,\left(\,\|\varrho^{n+1}+\varrho^n-\varrho^{n-1}-\varrho^{n-2}\|_{0,h}
+\|\vq^{n+1}+\vq^n-\vq^{n-1}-\vq^{n-2}\|_{0,h}\,\right)\\
\leq&\,C_{\ddelta}\,\tau\,\left(\,h^2+\|\vq^{n+1}\|_{0,h}
+\|\vq^{n}\|_{0,h}\,\right)\\
&+C_{\ddelta}\,\left(\,\|\vq^{n+1}-\vq^n\|_{0,h}
+2\,\|\vq^n-\vq^{n-1}\|_{0,h}
+\|\vq^{n-1}-\vq^{n-2}\|_{0,h}\,\right)\\
&+C_{\ddelta}\,\left(\,\|\varrho^{n+1}-\varrho^n\|_{0,h}
+2\,\|\varrho^n-\varrho^{n-1}\|_{0,h}
+\|\varrho^{n-1}-\varrho^{n-2}\|_{0,h}\,\right)\\
\leq&\,C_{\ddelta}\,\tau\,\left(\,h^2+\|\vq^{n+1}\|_{0,h}
+\|\vq^{n}\|_{0,h}+\|\partial\vq^{n+1}\|_{0,h}
+\|\partial\vq^n\|_{0,h}+\|\partial\vq^{n-1}\|_{0,h}\,\right).\\
\end{split}
\end{equation}
Now, we use \eqref{MLF_4}, \eqref{BasicHot3}
(with ${\mathfrak g}=\gff_{\ddelta}$),
\eqref{ABBA_8}, \eqref{ABBA_9} and \eqref{SHELL_22},
to have
\begin{equation}\label{Tzimi_6}
\begin{split}
\|{\sf\Gamma}^{6,n}\|_{0,h}\leq&\,C_{\ddelta}\,\|{\sf\Gamma}_{\star}^{6,n}\|_{0,h}\\
\leq&\,C_{\ddelta}\,\left(\,\tau\,\|\emid^{n-2}\|_{0,h}
+\|\emid^n-\emid^{n-2}\|_{0,h}\right)\\
\leq&\,C_{\ddelta}\,\left(\,\tau\,\|\emid^{n-2}\|_{0,h}
+\|\sigma^n_{\ddelta}\|_{0,h}+\|{\sf r}^n-{\sf r}^{n-1}\|_{0,h}\right)\\
\leq&\,C_{\ddelta}\,\tau\left(\,\tau^2+h^2+\|\emid^{n-2}\|_{0,h}
+\|\partial\vq^n\|_{0,h}+\|\vq^{n-1}\|_{0,h}\right)\\
\leq&\,C_{\ddelta}\,\tau\left(\,\tau^2+h^2+\|\emid^{n-2}\|_{0,h}
+\|\partial\vq^n\|_{0,h}+\|\vq^{n}\|_{0,h}+\tau\,\|\partial\vq^n\|_{0,h}\right)\\
\leq&\,C_{\ddelta}\,\tau\left(\,\tau^2+h^2+\|\emid^{n-2}\|_{0,h}
+\|\partial\vq^n\|_{0,h}+\|\vq^{n}\|_{0,h}\right).\\
\end{split}
\end{equation}
Combining \eqref{Tzimi_1}, \eqref{Tzimi_2},\eqref{Tzimi_3},\eqref{Tzimi_4},
\eqref{Tzimi_5} and \eqref{Tzimi_6}, we arrive at
\begin{equation}\label{Tzimi_7}
\begin{split}
\sum_{\ell=1}^6\|{\sf\Gamma}^{\ell,n}\|_{0,h}\leq&\,C_{\ddelta}\,\tau\,\big(\,
\tau^2+h^2+\|\vq^{n+1}\|_{0,h}+\|\vq^{n}\|_{0,h}+\|\emid^{n-2}\|_{0,h}\\
&\hskip1.5truecm
+\|\partial\vq^{n+1}\|_{0,h}
+\|\partial\vq^n\|_{0,h}+\|\partial\vq^{n-1}\|_{0,h}\,\big).\\
\end{split}
\end{equation}
%
%
\par\noindent\vskip0.2truecm\par\noindent
$\boxed{{\tt Part\,\,\,9}}:$ 
Let $I_h:\dspace\mapsto\dspace$ be the identity operator,
$A_h:=I_h-\iii\,\tfrac{\tau}{2}\,\Delta_h$,
$T_h:=I_h+\iii\,\tfrac{\tau}{2}\,\Delta_h$
and $B_h:=A_h^{-1}T_h$. In view of Lemma~\ref{operator_lemma},
\eqref{XABA_0} is equivalent to 
\begin{equation*}
\partial\vq^{n+1}=
(B_h-I_h)(\partial\vq^{n})+B_h(\partial\vq^{n-1})
+\sum_{\ell=1}^6A_h^{-1}({\sf\Gamma}^{\ell,n}),
\quad n=2,\dots,N-1,
\end{equation*}
%
%
which can be written in a vector operational form (cf. \cite{Besse2})
as follows
\begin{equation}\label{SABA_0}
\left[
\begin{array}{l}
\partial\vq^{n+1}\cr
\hline
\partial\vq^n \cr
\end{array}\right]=G\,
\left[
\begin{array}{l}
\partial\vq^{n}\cr
\hline
\partial\vq^{n-1} \cr
\end{array}\right]
+\left[
\begin{array}{l}
F^n\cr
\hline
0 \cr
\end{array}\right],
\quad n=2,\dots,N-1,
\end{equation}
%
%
%
%
where
\begin{equation*}
G:=\left[\begin{array}{c|c}
B_h-I_h & B_h\cr
\hline
I_h& 0\cr
\end{array}\right]
\quad{\rm and}\quad
F^n:=\sum_{\ell=1}^6A_h^{-1}({\sf\Gamma}^{\ell,n}).
\end{equation*}
Then, a simple induction argument yields that
\begin{equation}\label{AGALAB_1}
\left[
\begin{array}{l}
\partial\vq^{m+1}\cr
\hline
\partial\vq^m \cr
\end{array}\right]=G^{m-1}\,
\left[
\begin{array}{l}
\partial\vq^{2}\cr
\hline
\partial\vq^{1} \cr
\end{array}\right]
+\sum_{\ell=2}^{m}G^{m-\ell}\,
\left[
\begin{array}{l}
F^{\ell}\cr
\hline
0 \cr
\end{array}\right],
\quad m=2,\dots,N-1,
\end{equation}
%
%
%
Observing that $G$ can be diagonalized as follows 
\begin{equation*}
G=\left[\begin{array}{c|c}
-I_h & I_h\cr
\hline
\ \ I_h&\ \ B_h^{-1}\cr
\end{array}\right]
\,\left[\begin{array}{c|c}
-I_h & 0\cr
\hline
\ \ 0& B_h\cr
\end{array}\right]
\,\left[\begin{array}{c|c}
-I_h & I_h\cr
\hline
\ \ I_h& B_h^{-1}\cr
\end{array}\right]^{-1}
\end{equation*}
we conclude that
\begin{equation}\label{AGALAB_2}
\begin{split}
G^{\kappa}=&\,\left[\begin{array}{c|c}
-I_h & I_h\cr
\hline
\ \ I_h&\ \ B_h^{-1}\cr
\end{array}\right]
\,\left[\begin{array}{c|c}
(-1)^{\kappa}\,I_h & 0\cr
\hline
\ \ 0& B_h^{\kappa}\cr
\end{array}\right]
\,\left[\begin{array}{c|c}
-I_h & I_h\cr
\hline
\ \ I_h& B_h^{-1}\cr
\end{array}\right]^{-1}\\
=&\,
\left[\begin{array}{c|c}
(-1)^{\kappa+1}\,I_h & B_h^{\kappa}\cr
\hline
(-1)^{\kappa}\,I_h&\ \ B_h^{\kappa-1}\cr
\end{array}\right]
\,\left[\begin{array}{c|c}
-I_h & I_h\cr
\hline
\ \ I_h& B_h^{-1}\cr
\end{array}\right]^{-1}
\quad\forall\,\kappa\in\nset.\\
\end{split}
\end{equation}
%
%
%
It is easily seen that
\begin{equation*}
\begin{split}
\left[
\begin{array}{c|c}
-I_h & I_h\cr
\hline
\ \ I_h & B_h^{-1}\cr
\end{array}\right]^{-1}=&\,\left[
\begin{array}{c|c}
-(I_h+B_h^{-1})^{-1}B_h^{-1} & (I_h+B_h^{-1})^{-1}\cr
\hline
\ \ (I_h+B_h^{-1})^{-1}& (I_h+B_h^{-1})^{-1}\cr
\end{array}\right]\\
=&\,\left[
\begin{array}{c|c}
-(I_h+B_h)^{-1} & B_h(I_h+B_h)^{-1}\cr
\hline
\ \ B_h(I_h+B_h)^{-1}& B_h(I_h+B_h)^{-1}\cr
\end{array}\right],\\
\end{split}
\end{equation*}
which, along with \eqref{AGALAB_2} and \eqref{Megatree4}, yields
\begin{equation}\label{AGALAB_3}
G^{\kappa}=\tfrac{1}{2}\,\left[
\begin{array}{c|c}
\left[(-1)^{\kappa}\,I_h+B_h^{\kappa+1}\right]\,A_h
&\left[(-1)^{\kappa+1}\,B_h+B_h^{\kappa+1}\right]\,A_h\cr
\hline
\left[(-1)^{\kappa+1}\,I_h+B_h^{\kappa}\right]\,A_h
&\left[(-1)^{\kappa}\,B_h+B_h^{\kappa}\right]\,A_h\cr
\end{array}\right]\quad\forall\,\kappa\in\nset.
\end{equation}
%
%
%
%
%
%
Combining \eqref{AGALAB_1} and \eqref{AGALAB_3}, we arrive at
\begin{equation}\label{AGALAB_4}
\begin{split}
\partial\vq^{m+1}=&\,\left[(-1)^{m-1}\,I_h+B_h^{m}\right]\,A_h(\partial\vq^2)
+\left[(-1)^{m}\,B_h+B_h^{m}\right]\,A_h(\partial\vq^1)\\
&\quad+\tfrac{1}{2}\,\sum_{\ell=2}^m\,
\left[(-1)^{m-\ell}\,I_h+B_h^{m-\ell+1}\right]
\,\left(\,\sum_{\ell'=1}^6{\sf\Gamma}^{\ell',\ell}\,\right)
\end{split}
\end{equation}
and
\begin{equation}\label{AGALAB_5}
\begin{split}
\partial\vq^{m}=&\,\left[(-1)^{m}\,I_h+B_h^{m-1}\right]\,A_h(\partial\vq^2)
+\left[(-1)^{m-1}\,B_h+B_h^{m-1}\right]\,A_h(\partial\vq^1)\\
&\quad+\tfrac{1}{2}\,\sum_{\ell=2}^m\,
\left[(-1)^{m-\ell+1}\,I_h+B_h^{m-\ell}\right]
\,\left(\,\sum_{\ell'=1}^6{\sf\Gamma}^{\ell',\ell}\,\right)
\end{split}
\end{equation}
for $m=2,\dots,N-1$.
%
%
%
\par\noindent\vskip0.2truecm\par\noindent
$\boxed{{\tt Part\,\,\,10}}:$
Applying the discrete norm $\|\cdot\|_{0,h}$ on both sides of
\eqref{AGALAB_4} and \eqref{AGALAB_5}, and then using
\eqref{Megatree1}, it follows that
\begin{equation*}
\begin{split}
\|\partial\vq^{m+1}\|_{0,h}+\|\partial\vq^{m}\|_{0,h}
\leq&\,4\,\left[\,\|A_h(\partial\vq^2)\|_{0,h}+\|A_h(\partial\vq^1)\|_{0,h}\,\right]\\
&\quad+2\,\sum_{\ell=2}^{m}\sum_{\ell=1}^6\|{\sf\Gamma}^{\ell',\ell}\|_{0,h},
\quad m=2,\dots,N-1,
\end{split}
\end{equation*}
which, along with \eqref{inest_4}, \eqref{inest_5} and
\eqref{Tzimi_7}, yields
\begin{equation}\label{Tzimi_8}
\begin{split}
\|\partial\vq^{m+1}\|_{0,h}+\|\partial\vq^{m}\|_{0,h}
\leq&\,C_{\ddelta}\,(\tau^2+h^2+\|\emid^0\|_{0,h}+\tau\,\|\vq^{m+1}\|_{0,h}
+\tau\,\|\partial\vq^{m+1}\|_{0,h})\\
&\,+C_{\ddelta}\,\tau\,\sum_{n=2}^m
(\|\emid^{n-2}\|_{0,h}+\|\vq^{n}\|_{0,h}
+\|\partial\vq^{n}\|_{0,h}
+\|\partial\vq^{n-1}\|_{0,h})
\end{split}
\end{equation}
for $m=2,\dots,N-1$.
\par
Observing that
$\|\vq^{n-1}\|_{0,h}\leq\tau\,\|\partial\vq^n\|+\|\vq^n\|_{0,h}$
and combining \eqref{ABBA_6} and \eqref{ABBA_10}, we obtain
\begin{equation*}
\begin{split}
\|\vq^{n+1}\|_{0,h}+\|\emid^{n}\|_{0,h}+\|\emid^{n-1}\|_{0,h}
\leq&\,\|\vq^n\|_{0,h}+\|\emid^{n-1}\|_{0,h}+\|\emid^{n-2}\|_{0,h}
+C_{\ddelta}\,\tau\,(\tau^2+h^2)\\
&\,\quad+C_{\ddelta}\,\tau\,\big(\|\vq^{n+1}\|_{0,h}+\|\vq^n\|_{0,h}
+\|\partial\vq^n\|_{0,h}+\|\emid^n\|_{0,h}\,\big)
\end{split}
\end{equation*}
for $n=2,\dots,N-1$. Now, sum both sides of the latter inequality with
respect to $n$ (from $2$ up to $m$)
and use \eqref{inest_3} and \eqref{inest_2}, to get
\begin{equation}\label{XE2019_1}
\begin{split}
\|\vq^{m+1}\|_{0,h}+\|\emid^{m}\|_{0,h}+\|\emid^{m-1}\|_{0,h}
\leq&\,C_{\ddelta}\,(\tau^2+h^2+\|\emid^0\|_{0,h}+\tau\,\|\vq^{m+1}\|_{0,h}
+\tau\,\|\emid^{m}\|_{0,h})\\
&\,\quad+C_{\ddelta}\,\tau\,\sum_{n=2}^m(\|\emid^{n-1}\|_{0,h}
+\|\vq^n\|_{0,h}+\|\partial\vq^n\|_{0,h})
\end{split}
\end{equation} 
for $m=2,\dots,N-1$.
Introducing the error quantities below
\begin{equation}\label{gammaerror}
\gamma_h^n:=\|\emid^{n-1}\|_{0,h}+\|\emid^{n-2}\|_{0,h}
+\|\vq^n\|_{0,h}+\|\partial\vq^n\|_{0,h}
+\|\partial\vq^{n-1}\|_{0,h},\quad n=2,\dots,N,
\end{equation}
from \eqref{Tzimi_8} and \eqref{XE2019_1} we conclude that there exists a
constant ${\sf C}_{3,\ddelta}\ge{\sf C}_{2,\ddelta}$ such that
\begin{equation}\label{XE2019_2}
(1-{\sf C}_{3,\ddelta})\,\gamma_h^{m+1}\leq\,{\sf C}_{3,\ddelta}\,(\tau^2+h^2+\|\emid^0\|_{0,h})
+C_{\ddelta}\,\tau\,\sum_{n=2}^m\gamma_h^n,
\quad m=2,\dots,N-1.
\end{equation}
Assuming that $\tau\,{\sf C}_{3,\ddelta}\leq\tfrac{1}{2}$ and applying a standard
discrete Gronwall argument, \eqref{XE2019_2} yields that
\begin{equation}\label{XE2019_3}
\max_{2\leq{m}\leq{\ssy N}}\gamma_h^m\leq\,C_{\ddelta}\,(\tau^2+h^2+\|\emid^0\|_{0,h}
+\gamma_h^2).
\end{equation}
Using \eqref{gammaerror}, \eqref{inest_2}, \eqref{inest_3}
and \eqref{inest_1}, we obtain
\begin{equation}\label{XE2019_4}
\begin{split}
\gamma_h^2=&\,\|\emid^{1}\|_{0,h}+\|\emid^{0}\|_{0,h}
+\|\vq^2\|_{0,h}+\|\partial\vq^2\|_{0,h}
+\|\partial\vq^{1}\|_{0,h}\\
\leq&\,C_{\ddelta}\,(\tau^2+h^2+\|\emid^0\|_{0,h})
+\tau^{-1}\,(\|\vq^2\|_{0,h}+2\,\|\vq^1\|_{0,h})\\
\leq&\,C_{\ddelta}\,(\tau^2+h^2+\|\emid^0\|_{0,h}).\\
\end{split}
\end{equation}
Thus, \eqref{XE2019_3}, \eqref{XE2019_4} and \eqref{inest_1}, yield
\begin{equation}\label{XE2019_5}
\max_{0\leq{m}\leq{\ssy N-1}}\|\emid^m\|_{0,h}
+\max_{0\leq{m}\leq{\ssy N}}\|\vq^m\|_{0,h}
+\max_{1\leq{m}\leq{\ssy N}}\|\partial\vq^m\|_{0,h}
\leq\,C_{\ddelta}\,(\tau^2+h^2+\|\emid^0\|_{0,h}).
\end{equation}
%
%
%
%
\par
Taking the $(\cdot,\cdot)_{0,h}-$inner product of \eqref{ABBA_0}
with $(\vq^{n+1}-\vq^n)$, and then using \eqref{NewEra1} and
keeping the imaginary parts of the relation obtained, we have
\begin{equation*}
|\vq^{n+1}|_{1,h}^2-|\vq^n|_{1,h}^2=2\,\tau\,\sum_{\ell=1}^4
\Imm[({\mathcal B}^{\ell,n},\partial\vq^{n+1})_{0,h}],\quad n=0,\dots,N-1,
\end{equation*}
which, along with the Cauchy-Schwarz inequality, 
\eqref{ABBA_3}, \eqref{ABBA_4}, \eqref{ABBA_5}
and \eqref{XE2019_5}, yields
\begin{equation}\label{XE2019_7}
\begin{split}
|\vq^{n+1}|_{1,h}^2-|\vq^n|_{1,h}^2
\leq&\,2\,\tau\,\|\partial\vq^{n+1}\|_{0,h}
\,\sum_{\ell=1}^4\|{\mathcal B}^{\ell,n}\|_{0,h}\\
\leq&\,C_{\ddelta}\,\tau\,(\tau^2+h^2+\|\emid^0\|_{0,h})^2,
\quad n=0,\dots,N-1.
\end{split}
\end{equation}
Since $\vq^0=0$, after applying a standard discrete Gronwall argument on
\eqref{XE2019_7}, we arrive at
\begin{equation}\label{XE2019_8}
\max_{0\leq{m}\leq{\ssy N}}|\vq^m|_{1,h}\leq\,C_{\ddelta}\,(\tau^2+h^2+\|\emid^0\|_{0,h}).
\end{equation}
Combining \eqref{XE2019_8}, \eqref{XE2019_5}, \eqref{ELP_5}
and \eqref{Les_Halles_4}, we obtain
\begin{equation}\label{Fight_1}
\begin{split}
\max_{0\leq{n}\leq{\ssy N}}\|e^n\|_{1,h}
\leq&\,\max_{0\leq{n}\leq{\ssy N}}
\left(\|\vq^n\|_{1,h}+\|\varrho^n\|_{1,h}\right)\\\
\leq&\,C_{\ddelta}\,(\tau^2+h^2+\|\emid^0\|_{0,h})\\
\leq&\,C_{\ddelta}\,(\tau^2+h^2),
\end{split}
\end{equation}
which, along with \eqref{Les_Halles_11}, \eqref{Les_Halles_3}
and \eqref{ELP_5}, establishes \eqref{mod_BR_cnv_1}.
%
%
%
\par\noindent\vskip0.2truecm\par\noindent
$\boxed{{\tt Part\,\,\,11}}:$
For $n=2,\dots,N-1$, let $\zeta^n_{\ddelta}\in\rspacez$
and $\psi^n_{\ddelta}\in\dspace$ be defined by
\begin{equation*}
\begin{split}
\zeta^n_{\ddelta}:=&\,|\gamma_{\ddelta}(V_{\ddelta}^n)|^2
-|\gamma_{\ddelta}(V_{\ddelta}^{n-1})|^2-|\gamma_{\ddelta}(u^n)|^2
+|\gamma_{\ddelta}(u^{n-1})|^2,\\
\psi^n_{\ddelta}:=&\,\gamma_{\ddelta}(V_{\ddelta}^n)
-\gamma_{\ddelta}(V_{\ddelta}^{n-1})-\gamma_{\ddelta}(u^n)
+\gamma_{\ddelta}(u^{n-1}).\\
\end{split}
\end{equation*}
Under the light of \eqref{sigma_def},
\eqref{BasicHot4} with (${\mathfrak g}=g$
and $\varepsilon=2\,\sup_{\ssy\rset}\mol_{\ddelta}^2$), \eqref{LH1},
\eqref{dpoincare}, \eqref{Jalapeno12}, \eqref{MLF_4}, 
\eqref{BasicHot5b}, \eqref{Fight_1}, \eqref{ELP_5},
\eqref{Jalapeno21}, \eqref{BasicHot7} and \eqref{ELP_6}, we have
\begin{equation}\label{ABBA_109}
\begin{split}
|\sigma^n_{\ddelta}|_{1,h}\leq&\,C_{\ddelta}\,\left[\,
\left(1+\big||\gamma_{\ddelta}(V^n_{\ddelta})|^2\big|_{1,h}
+\big||\gamma_{\ddelta}(V_{\ddelta}^{n-1})|^2\big|_{1,h}\right)
\,|\zeta^n_{\ddelta}|_{1,h}\right.\\
&\quad\quad\quad
\left.+\tau\,\left||\gamma_{\ddelta}(V^{n-1}_{\ddelta})|^2
-|\gamma_{\ddelta}(u^{n-1})|^2\right|_{1,h}\right]\\
\leq&\,C_{\ddelta}\,\left[\,
\left(1+|\gamma_{\ddelta}(V^n_{\ddelta})|_{\infty,h}\,
|\gamma_{\ddelta}(V^n_{\ddelta})|_{1,h}
+|\gamma_{\ddelta}(V_{\ddelta}^{n-1})|_{\infty,h}\,
|\gamma_{\ddelta}(V_{\ddelta}^{n-1})|_{1,h}\right)
\,|\zeta^n_{\ddelta}|_{1,h}\right.\\
&\quad\quad\quad
\left.+\tau\,\left(|\gamma_{\ddelta}(V_{\ddelta}^{n-1})|_{\infty,h}
+|\!|\!|\delta_h(u^{n-1})|\!|\!|_{\infty,h}\right)
\,\left|\gamma_{\ddelta}(V^{n-1}_{\ddelta})
-\gamma_{\ddelta}(u^{n-1})\right|_{1,h}\right]\\
\leq&\,C_{\ddelta}\,\left[\,
\left(1+|V^n_{\ddelta}|_{1,h}+|V_{\ddelta}^{n-1}|_{1,h}\right)
\,|\zeta^n_{\ddelta}|_{1,h}+\tau\,|e^{n-1}|_{1,h}\right]\\
\leq&\,C_{\ddelta}\,\left[\,
\left(1+|e^n|_{1,h}+|e^{n-1}|_{1,h}\right)
\,|\zeta^n_{\ddelta}|_{1,h}+\tau\,|e^{n-1}|_{1,h}\right]\\
\leq&\,C_{\ddelta}\,\left[\,|\zeta^n_{\ddelta}|_{1,h}
+\tau\,(\tau^2+h^2+\|\emid^0\|_{0,h})\right],\\
\end{split}
\end{equation}
\begin{equation}\label{ABBA_1010}
\begin{split}
|\zeta^n_{\ddelta}|_{1,h}\leq&\,C_{\ddelta}\,\left[
\,\left(1+|\gamma_{\ddelta}(V_{\ddelta}^n)|_{1,h}+|\gamma_{\ddelta}(V_{\ddelta}^{n-1})|_{1,h}\right)
\,|\psi^n_{\ddelta}|_{1,h}
+\tau\,|\gamma_{\ddelta}(V_{\ddelta}^{n-1})-\gamma_{\ddelta}(u^{n-1})|_{1,h}
\,\right]\\
\leq&\,C_{\ddelta}\,\left[
\,\left(1+|V_{\ddelta}^n|_{1,h}+|V_{\ddelta}^{n-1}|_{1,h}\right)
\,|\psi_{\ddelta}^n|_{1,h}+\tau\,|e^{n-1}|_{1,h}
\,\right]\\
\leq&\,C_{\ddelta}\,\left[
\,\left(1+|e^n|_{1,h}+|e^{n-1}|_{1,h}\right)
\,|\psi_{\ddelta}^n|_{1,h}
+\tau\,|e^{n-1}|_{1,h}\,\right]\\
\leq&\,C_{\ddelta}\,\left[\,|\psi^n_{\ddelta}|_{1,h}
+\tau\,(\tau^2+h^2+\|\emid^0\|_{0,h})\right]\\
\end{split}
\end{equation}
and
\begin{equation}\label{ABBA_1011}
\begin{split}
|\psi_{\ddelta}^n|_{1,h}\leq&\,C_{\ddelta}\,\left[
\left(1+|V_{\ddelta}^n|_{1,h}+|V_{\ddelta}^{n-1}|_{1,h}\right)\,|e^{n}-e^{n-1}|_{1,h}
+\tau\,|e^{n-1}|_{1,h}\right]\\
\leq&\,C_{\ddelta}\,\left[
\left(1+|e^n|_{1,h}+|e^{n-1}|_{1,h}\right)\,|e^{n}-e^{n-1}|_{1,h}
+\tau\,|e^{n-1}|_{1,h}\right]\\
\leq&\,C_{\ddelta}\,
\left[\,|\vq^{n}-\vq^{n-1}|_{1,h}+|\varrho^{n}-\varrho^{n-1}|_{1,h}
+\tau\,(\tau^2+h^2)\,\right]\\
\leq&\,C_{\ddelta}\,\tau\,
\left(\,|\partial\vq^{n}|_{1,h}+\tau^2+h^2+\|\emid^0\|_{0,h}\right)\\
\end{split}
\end{equation}
for $n=2,\dots,N-1$. Thus, \eqref{ABBA_109}, \eqref{ABBA_1010}
and \eqref{ABBA_1011}, yield that
\begin{equation}\label{ABBA_1012}
|\sigma^n_{\ddelta}|_{1,h}\leq\,C_{\ddelta}\,\tau\,
\left(\tau^2+h^2+\|\emid^0\|_{0,h}+|\partial\vq^n|_{1,h}\right),\quad n=2,\dots,N-1.
\end{equation}
Taking the $(\cdot,\cdot)_{0,h}$ inner product of both sides of \eqref{ABBA_8}
with $\Delta_h(\emid^{n}+\emid^{n-2})$, and then applying \eqref{NewEra1},
keeping the real part of the obtained relation and using
the Cauchy-Schwarz inequality, it follows that
\begin{equation*}
|\emid^{n}|_{1,h}^2-|\emid^{n-2}|^2_{1,h}\leq\,2\,
\left(\,|\sigma^n_{\ddelta}|_{1,h}
+|{\sf r}^n-{\sf r}^{n-1}|_{1,h}\,\right)
\,\left(|\emid^{n}|_{1,h}+|\emid^{n-2}|_{1,h}\right),
\quad n=2,\dots,N-1.
\end{equation*}
After using \eqref{SHELL_22H1} and \eqref{ABBA_1012},
the latter inequality yields
\begin{equation}\label{ABBA_1013}
|\emid^{n}|_{1,h}-|\emid^{n-2}|_{1,h}
\leq\,C_{\ddelta}\,\tau
\,(\tau^2+h^2+\|\emid^0\|_{0,h}+|\partial\vq^n|_{1,h}),
\quad n=2,\dots,N-1.
\end{equation}
Also, using \eqref{ABBA_7} (with $n=1$), \eqref{SHELL_21H1},
\eqref{BasicHot2} (with ${\mathfrak g}=g$ and
$\varepsilon=2\,\sup_{\ssy\rset}\mol_{\ddelta}^2$), \eqref{dpoincare},
\eqref{Jalapeno12}, \eqref{MLF_4} and \eqref{BasicHot5b}, we get
\begin{equation*}
\begin{split}
|\emid^1|_{1,h}\leq&\,C_{\ddelta}\,\left[|\emid^0|_{1,h}+|{\sf r}^1|_{1,h}
+\big|g(|\gamma_{\ddelta}(V_{\ddelta}^1)|^2)
-g(|\gamma_{\ddelta}(u^1)|^2)\big|_{1,h}\right]\\
\leq&\,C_{\ddelta}\,\left[\,\tau^2+|\emid^0|_{1,h}
+\big||\gamma_{\ddelta}(V_{\ddelta}^1)|^2
-|\gamma_{\ddelta}(u^1)|^2\big|_{1,h}\,\right]\\
\leq&\,C_{\ddelta}\,\left[\,\tau^2+|\emid^0|_{1,h}
+\big(1+|\gamma_{\ddelta}(V_{\ddelta}^1)|_{\infty,h}\big)\,\,\,
\big|\gamma_{\ddelta}(V_{\ddelta}^1)-\gamma_{\ddelta}(u^1)\big|_{1,h}\,\right]\\
\leq&\,C_{\ddelta}\,\left(\,\tau^2+|\emid^0|_{1,h}+|e^1|_{1,h}\,\right),\\
\end{split}
\end{equation*}
which, along with \eqref{Fight_1}, \eqref{Les_Halles_4} and 
\eqref{Les_Halles_4H1}, yields
\begin{equation}\label{Fight_2}
\begin{split}
|\emid^1|_{1,h}\leq&\,C_{\ddelta}\,(\tau^2+h^2+\|\emid^0\|_{1,h})\\
\leq&\,C_{\ddelta}\,(\tau^2+h^2).\\
\end{split}
\end{equation}
\par   
Summing both sides of \eqref{ABBA_1013}
with respect to $n$ (from $2$ up to $m$)
and using \eqref{Fight_2}, we arrive at
\begin{equation}\label{ABBA_Final}
|\emid^m|_{1,h}+|\emid^{m-1}|_{1,h}\leq\,
C_{\ddelta}\,(\tau^2+h^2+\|\emid^0\|_{1,h})+C_{\ddelta}\,\tau
\,\sum_{\ell=2}^m|\partial\vq^{\ell}|_{1,h},
\quad m=2,\dots,N-1.
\end{equation}
%
%
\par\noindent\vskip0.2truecm\par\noindent
$\boxed{{\tt Part\,\,\,12}}:$ First, we observe that
\eqref{XE2019_5}, \eqref{Les_Halles_4} and the inverse inequality \eqref{H1L2}
yield that
\begin{equation}\label{MidBound1}
\max_{0\leq{m}\leq{\ssy N-1}}|\emid^m|_{1,h}\leq\,C_{\ddelta}
\,\left[h+\left(\tfrac{\tau}{h}\right)\,\tau\right].
\end{equation}
From \eqref{ABBA_7}, \eqref{SHELL_21H1}, \eqref{BasicHot2},
\eqref{dpoincare}, \eqref{Jalapeno12} and \eqref{BasicHot5b},
we obtain
\begin{equation*}
\begin{split}
|\emid^n|_{1,h}\leq&\,2\,\big|g(|\gamma_{\ddelta}(V_{\ddelta}^n)|^2)
-g(|\gamma_{\ddelta}(u^n)|^2)\big|_{1,h}
+2\,|{\sf r}^n|_{1,h}+|\emid^{n-1}|_{1,h}\\
\leq&\,C_{\ddelta}\,\left[\tau^2
+\big||\gamma_{\ddelta}(V_{\ddelta}^n)|^2
-|\gamma_{\ddelta}(u^n)|^2
\big|_{1,h}\right]+|\emid^{n-1}|_{1,h}\\
\leq&\,C_{\ddelta}\,\left[\tau^2
+|\gamma_{\ddelta}(V_{\ddelta}^n)
-\gamma_{\ddelta}(u^n)|_{1,h}\right]+|\emid^{n-1}|_{1,h}\\
\leq&\,C_{\ddelta}\,(\tau^2+|e^n|_{1,h})+|\emid^{n-1}|_{1,h},
\quad n=1,\dots,N-1,
\end{split}
\end{equation*}
which, along with \eqref{Fight_1}, yields
\begin{equation*}
|\emid^n|_{1,h}\leq|\emid^{n-1}|_{1,h}+C_{\ddelta}\,(\tau^2+h^2),
\quad n=1,\dots,N-1.
\end{equation*}
Now, summing with respect to $n$ (from $1$ up to $m$)
and using \eqref{Les_Halles_4H1}, we get
\begin{equation}\label{MidBound2}
\begin{split}
\max_{0\leq{m}\leq{\ssy N-1}}|\emid^m|_{1,h}
\leq&\,|\emid^{0}|_{1,h}+C_{\ddelta}\,\tau^{-1}\,(\tau^2+h^2)\\
\leq&\,C_{\ddelta}\,\left[h\,\left(\tfrac{h}{\tau}\right)+\tau+\tau^2+h^2\right].\\
\end{split}
\end{equation}
Observing that $\min\{\tfrac{h}{\tau},\tfrac{\tau}{h}\}\leq1$ (cf. \cite{GLS})
and combining \eqref{MidBound1} and \eqref{MidBound2}, we conclude, easily, that
there exists a positive constant ${\sf C}_{\ddelta}^{\ssy\sf B,H^1}$ which 
is independent of $h$ and $\tau$, and such that
\begin{equation}\label{Global_MidBound}
\max_{0\leq{m}\leq{\ssy N-1}}|\emid^m|_{1,h}\leq\,{\sf C}_{\ddelta}^{\ssy\sf B, H^1}.
\end{equation} 
%
%
\par\noindent\vskip0.2truecm\par\noindent
$\boxed{{\tt Part\,\,\,13}}:$
Let $n\in\{2,\dots,N-1\}$. Using \eqref{Tzimi_0},
\eqref{ELP_5} and \eqref{SHELL_03H1}, we get
\begin{equation}\label{xantres_1}
|{\sf\Gamma}^{1,n}|_{1,h}+|{\sf\Gamma}^{2,n}|_{1,h}\leq\,C\,\tau\,(\tau^2+h^2).
\end{equation}
Also, using the mean value theorem, \eqref{LH1}, \eqref{BasicHot5b}, \eqref{dpoincare},
\eqref{Fight_1} and \eqref{BasicHot2} (with ${\mathfrak g}=\mol_{\ddelta}$),
we obtain
\begin{equation}\label{xantres_2}
\begin{split}
|{\sf\Gamma}^{4,n}|_{1,h}\leq&\,
\left|g(|\uu^{n+\half}|^2)-g(|\uu^{(n-2)+\half}|^2)\right|_{\infty,h}\,
\left|\gamma_{\ddelta}\left(\tfrac{V_{\ddelta}^{n+1}+V_{\ddelta}^{n}}{2}\right)
-\gamma_{\ddelta}\left(\tfrac{\uu^{n+1}+\uu^{n}}{2}\right)\right|_{1,h}\\
&\,\quad+\left|g(|\uu^{n+\half}|^2)-g(|\uu^{(n-2)+\half}|^2)\right|_{1,h}\,
\left|\gamma_{\ddelta}\left(\tfrac{V_{\ddelta}^{n+1}+V_{\ddelta}^{n}}{2}\right)
-\gamma_{\ddelta}\left(\tfrac{\uu^{n+1}+\uu^{n}}{2}\right)\right|_{\infty,h}\\
\leq&\,C\,\tau\,\left|\gamma_{\ddelta}\left(
\tfrac{V_{\ddelta}^{n+1}+V_{\ddelta}^{n}}{2}\right)
-\gamma_{\ddelta}\left(\tfrac{\uu^{n+1}+\uu^{n}}{2}\right)\right|_{1,h}\\
\leq&\,C_{\ddelta}\,\tau\,\left(1+|\!|\!|\delta_h(u^{n+1}+u^n)|\!|\!|_{\infty,h}\right)
\,|e^{n+1}+e^n|_{1,h}\\
\leq&\,C_{\ddelta}\,\tau\,(\tau^2+h^2+\|\emid^0\|_{0,h})\\
\end{split}
\end{equation}
and
\begin{equation}\label{xantres_3}
\begin{split}
|{\sf\Gamma}^{5,n}|_{1,h}\leq&\,
\left|(u^{n+1}+u^n)-(u^{n-1}+u^{n-1})\right|_{\infty,h}\,
\big|\mol_{\ddelta}(\Phi_{\ddelta}^{(n-2)+\half})
-\mol_{\ddelta}(g(|u^{(n-2)+\half}|^2))\big|_{1,h}\\
&\,\quad+\left|(u^{n+1}+u^n)-(u^{n-1}+u^{n-1})\right|_{1,h}\,
\big|\mol_{\ddelta}(\Phi_{\ddelta}^{(n-2)+\half})
-\mol_{\ddelta}(g(|u^{(n-2)+\half}|^2))\big|_{\infty,h}\\
\leq&\,C\,\tau\,\big|\mol_{\ddelta}(\Phi_{\ddelta}^{(n-2)+\half})
-\mol_{\ddelta}(g(|u^{(n-2)+\half}|^2))\big|_{1,h}\\
\leq&\,C_{\ddelta}\,\tau\,\left[|\emid^{n-2}|_{1,h}
+|\!|\!|\delta_h(g(|u^{(n-2)+\half}|^2))|\!|\!|_{\infty,h}\,
\|\emid^{n-2}\|_{0,h}\right]\\
\leq&\,C_{\ddelta}\,\tau\,|\emid^{n-2}|_{1,h}.\\
\end{split}
\end{equation}
Applying \eqref{BasicHot4} (with ${\mathfrak g}=\mol_{\ddelta}$),
\eqref{LH1}, \eqref{dpoincare}, \eqref{Global_MidBound},
\eqref{ABBA_8}, \eqref{ABBA_1012},
\eqref{SHELL_22H1}, \eqref{BasicHot7}, \eqref{Fight_1} and \eqref{ELP_6},
it follows that
\begin{equation}\label{xantres_4a}
\begin{split}
\big|{\sf\Gamma}_{\star}^{6,n}\big|_{1,h}\leq&\,C_{\ddelta}
\,\left[1+\big|\Phi_{\ddelta}^{n+\half}\big|_{1,h}
+\big|\Phi_{\ddelta}^{(n-2)+\half}\big|_{1,h}\right]
\,\,|\emid^n-\emid^{n-2}|_{1,h}
+C_{\ddelta}\,\tau\,|\emid^{n-2}|_{1,h}\\
\leq&\,C_{\ddelta}\,
\,\left(1+|\emid^n|_{1,h}+|\emid^{n-2}|_{1,h}\right)
\,\left(|\sigma^n_{\ddelta}|_{1,h}
+|{\sf r}^{n}-{\sf r}^{n-1}|_{1,h}\right)
+C_{\ddelta}\,\tau\,|\emid^{n-2}|_{1,h}\\
\leq&\,C_{\ddelta}\,\tau\,
\,\left(\tau^2+h^2+\|\emid^0\|_{0,h}+|\partial\vq^n|_{1,h}
+|\emid^{n-2}|_{1,h}\right)\\
\end{split}
\end{equation}
and
\begin{equation}\label{xantres_5a}
\begin{split}
|{\sf\Gamma}^{3,n}_{\star}|_{1,h}^2\leq&\,C_{\ddelta}\,\left(1+
|V_{\ddelta}^{n+1}+V_{\ddelta}^{n}|_{1,h}+|V_{\ddelta}^{n-1}+V_{\ddelta}^{n-2}|_{1,h}
\right)\,|e^{n+1}+e^n-e^{n-1}-e^{n-2}|_{1,h}\\
&\quad+C_{\ddelta}\,\tau\,|e^{n-1}+e^{n-2}|_{1,h}\\
\leq&\,C_{\ddelta}\,\left(1+
|e^{n+1}+e^{n}|_{1,h}+|e^{n-1}+e^{n-2}|_{1,h}\right)
\,|e^{n+1}+e^n-e^{n-1}-e^{n-2}|_{1,h}\\
&\quad+C_{\ddelta}\,\tau\,(\tau^2+h^2+\|\emid^0\|_{0,h})\\
\leq&\,C_{\ddelta}\,\left[
|\varrho^{n+1}-\varrho^n|_{1,h}+2\,|\varrho^{n}-\varrho^{n-1}|_{1,h}
+|\varrho^{n-1}-\varrho^{n-2}|_{1,h}\right.\\
&\hskip1.5truecm\left.+\tau\,|\partial\vq^{n+1}|_{1,h}+2\,\tau\,|\partial\vq^n|_{1,h}
+\tau\,|\partial\vq^{n-1}|_{1,h}+\tau\,(\tau^2+h^2+\|\emid^0\|_{0,h})\right]\\
\leq&\,C_{\ddelta}\,\tau\,\left(
|\partial\vq^{n+1}|_{1,h}+|\partial\vq^n|_{1,h}
+|\partial\vq^{n-1}|_{1,h}+\tau^2+h^2+\|\emid^0\|_{0,h}\right).\\
\end{split}
\end{equation}
Then, we use \eqref{MLF_4},
\eqref{LH1}, \eqref{BasicHot5b},
\eqref{Fight_1}, \eqref{xantres_4a}, \eqref{MLF_1},
\eqref{Global_MidBound} and \eqref{xantres_5a}, to get
\begin{equation}\label{xantres_4}
\begin{split}
|{\sf\Gamma}^{6,n}|_{1,h}\leq&\,
\left|\gamma_{\ddelta}\left(\tfrac{V_{\ddelta}^{n+1}+V_{\ddelta}^n}{2}\right)\right|_{\infty,h}
\,|{\sf\Gamma}_{\star}^{6,n}|_{1,h}
+\left|\gamma_{\ddelta}\left(\tfrac{V_{\ddelta}^{n+1}+V_{\ddelta}^n}{2}\right)\right|_{1,h}
\,|{\sf\Gamma}_{\star}^{6,n}|_{\infty,h}\\
\leq&\,C_{\ddelta}\,\left(1+\left|V_{\ddelta}^{n+1}+V_{\ddelta}^n\right|_{1,h}\right)
\,|{\sf\Gamma}_{\star}^{6,n}|_{1,h}\\
\leq&\,C_{\ddelta}\,|{\sf\Gamma}_{\star}^{6,n}|_{1,h}\\
\leq&\,C_{\ddelta}\,\tau\,
\,\left(\tau^2+h^2+\|\emid^0\|_{0,h}+|\partial\vq^n|_{1,h}
+|\emid^{n-2}|_{1,h}\right)
\end{split}
\end{equation}
and
\begin{equation}\label{xantres_5}
\begin{split}
|{\sf\Gamma}^{3,n}|_{1,h}\leq&\,
\left|\mol_{\ddelta}\big(\Phi_{\ddelta}^{(n-2)+\half}\big)\right|_{\infty,h}
\,|{\sf\Gamma}_{\star}^{3,n}|_{1,h}
+\left|\mol_{\ddelta}\big(\Phi_{\ddelta}^{(n-2)+\half}\big)\right|_{1,h}
\,|{\sf\Gamma}_{\star}^{3,n}|_{\infty,h}\\
\leq&\,C_{\ddelta}\,\left(1+\sup_{\ssy\rset}|\gff_{\ddelta}'|
\,|\Phi^{(n-2)+\half}_{\ddelta}|_{1,h}\right)
\,|{\sf\Gamma}_{\star}^{3,n}|_{1,h}\\
\leq&\,C_{\ddelta}\,|{\sf\Gamma}_{\star}^{3,n}|_{1,h}\\
\leq&\,C_{\ddelta}\,\tau\,\left(\tau^2+h^2+\|\emid^0\|_{0,h}
+|\partial\vq^{n+1}|_{1,h}+|\partial\vq^n|_{1,h}
+|\partial\vq^{n-1}|_{1,h}\right).\\
\end{split}
\end{equation}
Thus, from \eqref{xantres_1}, \eqref{xantres_2}, \eqref{xantres_3},
\eqref{xantres_4} and \eqref{xantres_5}, we arrive at
\begin{equation}\label{xantres_all}
\sum_{\ell=1}^6|{\sf\Gamma}^{\ell,n}|_{1,h}\leq\,C_{\ddelta}\,\tau\,
\left(\tau^2+h^2+\|\emid^0\|_{0,h}+|\partial\vq^{n+1}|_{1,h}+|\partial\vq^n|_{1,h}
+|\partial\vq^{n-1}|_{1,h}+|\emid^{n-2}|_{1,h}\right).
\end{equation}
%
%
\par\noindent\vskip0.2truecm\par\noindent
$\boxed{{\tt Part\,\,\,14}}:$
Let $n\in\{0,1\}$. Taking the $(\cdot,\cdot)_{0,h}-$inner
product of \eqref{ABBA_0} with $\Delta_h(\vq^{n+1}+\vq^{n})$
and then using \eqref{NewEra1} and keeping the real parts of
the relation obtained, it follows that
\begin{equation*}
|\vq^{n+1}|_{1,h}^2-|\vq^n|_{1,h}^2=\tau\,\sum_{\ell=1}^4\Ree[
(\!\!(\delta_h{\mathcal B}^{\ell,n},\delta_h(\vq^{n+1}+\vq^n))\!\!)_{0,h}],
\end{equation*}
which, along with the use of the Cauchy-Schwarz inequality, yields
\begin{equation}\label{Ah_0}
|\vq^{n+1}|_{1,h}-|\vq^n|_{1,h}\leq\tau\,\sum_{\ell=1}^4
|{\mathcal B}^{\ell,n}|_{1,h}.
\end{equation}
First, we observe that \eqref{ELP_6} and
\eqref{SHELL_02H1}, easily, yield the following bound
\begin{equation}\label{Ah_1}
|{\mathcal B}^{1,n}|_{1,h}+|{\mathcal B}^{2,n}|_{1,h}
\leq\,C\,(\tau^2+h^2).
\end{equation}
%
%
Combining \eqref{MLF_1}, \eqref{LH1}, \eqref{Global_MidBound},
\eqref{BasicHot5b}, \eqref{dpoincare}, \eqref{Fight_1},
\eqref{BasicHot2} (with ${\mathfrak g}=\mff_{\ddelta}$)
and \eqref{Fight_2}, we have
\begin{equation}\label{Ah_2}
\begin{split}
|{\mathcal B}^{3,n}|_{1,h}\leq&\,
|\gff_{\ddelta}\big(\Phi_{\ddelta}^{n+\half}\big)|_{\infty,h}\,
\Big|\gamma_{\ddelta}(\tfrac{u^{n+1}+u^{n}}{2})
-\gamma_{\ddelta}
\left(\tfrac{V_{\ddelta}^{n+1}+V_{\ddelta}^{n}}{2}\right)\Big|_{1,h}\\
&\quad+|\gff_{\ddelta}(\Phi^{n+\half}_{\ddelta})|_{1,h}\,
\Big|\gamma_{\ddelta}(\tfrac{u^{n+1}+u^{n}}{2})
-\gamma_{\ddelta}
\left(\tfrac{V_{\ddelta}^{n+1}+V_{\ddelta}^{n}}{2}\right)\Big|_{\infty,h}\\
\leq&\,C_{\ddelta}\,\tau\,\left[1+\sup_{\ssy\rset}|\gff_{\ddelta}'|
\,|\Phi^{n+\half}_{\ddelta}|_{1,h}\right]\,
\Big|\gamma_{\ddelta}
\left(\tfrac{V_{\ddelta}^{n+1}+V_{\ddelta}^{n}}{2}\right)
-\gamma_{\ddelta}(\tfrac{u^{n+1}+u^{n}}{2})\Big|_{1,h}\\
\leq&\,C_{\ddelta}\,|e^{n+1}+e^{n}|_{1,h}\\
\leq&\,C_{\ddelta}\,(\tau^2+h^2+\|\emid^0\|_{0,h})\\
\end{split}
\end{equation}
and
\begin{equation}\label{Ah_3}
\begin{split}
|{\mathcal B}^{4,n}|_{1,h}\leq&\,
|\uu^{n+1}+\uu^{n}|_{\infty,h}\,
\big|\gff_{\ddelta}(g(|\uu^{n+\half}|^2))
-\gff_{\ddelta}\big(\Phi_{\ddelta}^{n+\half}\big)\big|_{1,h}\\
&\hskip1.0truecm+|\!|\!|\delta_h(u^{n+1}+u^{n})|\!|\!|_{\infty,h}\,
\big\|\gff_{\ddelta}(g(|\uu^{n+\half}|^2))
-\gff_{\ddelta}\big(\Phi_{\ddelta}^{n+\half}\big)\big\|_{0,h}\\
\leq&\,C_{\ddelta}\,\left[\big|\gff_{\ddelta}\big(\Phi_{\ddelta}^{n+\half}\big)
-\gff_{\ddelta}(g(|\uu^{n+\half}|^2))\big|_{1,h}+\|\emid^{n}\|_{0,h}\right]\\
\leq&\,C_{\ddelta}\,(|\emid^n|_{1,h}+\|\emid^n\|_{0,h})\\
\leq&\,C\,|\emid^{n}|_{1,h}\\
\leq&\,C_{\ddelta}\,(\tau^2+h^2+\|\emid^0\|_{1,h}).\\
\end{split}
\end{equation}
\par
Since $\vq^0$, using \eqref{Ah_0}, \eqref{Ah_1}, \eqref{Ah_2} and \eqref{Ah_3},
we conclude that
\begin{equation}\label{init_H1}
|\vq^2|_{1,h}+|\vq^1|_{1,h}\leq\,C_{\delta}\,\tau\,(\tau^2+h^2+\|\emid^0\|_{1,h}).
\end{equation}
Combining \eqref{ABBA_0} (with $n=0$), \eqref{Ah_1}, \eqref{Ah_2},
\eqref{Ah_3}, we get
\begin{equation}\label{Ah_4}
\begin{split}
|A_h(\partial\vq^1)|_{1,h}=&\,\tau^{-1}\,|A_h(\vq^1)|_{1,h}\\
\leq&\,\sum_{\ell=1}^4|{\mathcal B}^{\ell,0}|_{1,h}\\
\leq&\,C_{\ddelta}\,(\tau^2+h^2+\|\emid^0\|_{1,h}).\\
\end{split}
\end{equation}
Finally, in view of \eqref{ABBA_0} (with $n=1$), \eqref{Ah_4}, 
\eqref{Ah_1}, \eqref{Ah_2}, \eqref{Ah_3} and \eqref{XE2019_8},
we obtain
\begin{equation}\label{Ah_5}
\begin{split}
|A_h(\partial\vq^2)|_{1,h}=&\,\tau^{-1}\,\left[\,
|A_h(\vq^2)|_{1,h}+|A_h(\vq^1)|_{1,h}\,\right]\\
\leq&\,C_{\ddelta}\tau^{-1}\,\left[\,|T_h(\vq^1)|_{1,h}
+\tau\,\sum_{\ell=1}^4|{\mathcal B}^{\ell,1}|_{1,h}+\tau\,(\tau^2+h^2+\|\emid^0\|_{1,h})\,\right]\\
\leq&\,C_{\ddelta}\tau^{-1}\,\left[\,|2\,\vq^1-A_h(\vq^1)|_{1,h}
+\tau\,(\tau^2+h^2+\|\emid^0\|_{1,h})\,\right]\\
\leq&\,C_{\ddelta}\tau^{-1}\,\left[\,2\,|\vq^1|_{1,h}+|A_h(\vq^1)|_{1,h}
+\tau\,(\tau^2+h^2+\|\emid^0\|_{1,h})\,\right]\\
\leq&\,C_{\ddelta}\,(\tau^2+h^2+\|\emid^0\|_{1,h}).\\
\end{split}
\end{equation}
%
%
%
\par\noindent\vskip0.2truecm\par\noindent
$\boxed{{\tt Part\,\,\,15}}:$
Apply the discrete norm $|\cdot|_{1,h}$ on both sides of
\eqref{AGALAB_4} and \eqref{AGALAB_5}, and then
use \eqref{Megatree5}, \eqref{Ah_4}, \eqref{Ah_5}
and \eqref{xantres_all}, to have
\begin{equation}\label{Round15_1}
\begin{split}
|\partial\vq^{m+1}|_{1,h}+|\partial\vq^{m}|_{1,h}
\leq&\,4\,\left[\,|A_h(\partial\vq^2)|_{1,h}+|A_h(\partial\vq^1)|_{1,h}\,\right]
+2\,\sum_{\ell=2}^{m}\sum_{\ell=1}^6|{\sf\Gamma}^{\ell',\ell}|_{1,h}\\
\leq&\,C_{\ddelta}\,\left(\tau^2+h^2+\|\emid^0\|_{1,h}+\tau\,|\partial\vq^{m+1}|_{1,h}\right)\\
&\quad
+\tau\,\sum_{\ell=2}^m\left(|\partial\vq^n|_{1,h}
+|\partial\vq^{n-1}|_{1,h}+|\emid^{n-2}|_{1,h}\right),
\quad m=2,\dots.N-1.\\
\end{split}
\end{equation}
Introducing the following discrete $H^1-$error quantities:
\begin{equation}\label{gammaH1}
{\widehat\gamma}_h^n:=|\emid^{n-1}|_{1,h}+|\emid^{n-2}|_{1,h}
+|\partial\vq^n|_{1,h}+|\partial\vq^{n-1}|_{1,h},\quad n=2,\dots,N,
\end{equation}
from \eqref{Round15_1} and \eqref{ABBA_Final} we conclude that there exists a
constant ${\sf C}_{4,\ddelta}\ge{\sf C}_{3,\ddelta}$ such that
\begin{equation}\label{Round15_2}
(1-{\sf C}_{4,\ddelta})\,{\widehat\gamma}_h^{m+1}
\leq\,{\sf C}_{4,\ddelta}\,(\tau^2+h^2+\|\emid^0\|_{1,h})
+C_{\ddelta}\,\tau\,\sum_{\ell=2}^m{\widehat\gamma}_h^{\ell},
\quad m=2,\dots,N-1.
\end{equation}
Assuming that $\tau\,{\sf C}_{4,\ddelta}\leq\tfrac{1}{2}$ and applying a standard
discrete Gronwall argument, \eqref{Round15_2} yields that
\begin{equation}\label{Round15_3}
\max_{2\leq{m}\leq{\ssy N}}{\widehat\gamma}_h^m
\leq\,C_{\ddelta}\,(\tau^2+h^2+\|\emid^0\|_{1,h}+{\widehat\gamma}_h^2).\\\
\end{equation}
Using \eqref{gammaH1}, \eqref{Fight_2} and \eqref{init_H1}, we obtain
\begin{equation}\label{Round15_4}
\begin{split}
{\widehat\gamma}_h^2=&\,|\emid^1|_{1,h}+|\emid^0|_{1,h}
+|\partial\vq^1|_{1,h}
+|\partial\vq^{2}|_{1,h}\\
\leq&\,C_{\ddelta}\,(\tau^2+h^2+\|\emid^0\|_{1,h})
+\tau^{-1}\,(|\vq^2|_{1,h}+2\,|\vq^1|_{1,h})\\
\leq&\,C_{\ddelta}\,(\tau^2+h^2+\|\emid^0\|_{1,h}).\\
\end{split}
\end{equation}
From, \eqref{Round15_3} and \eqref{Round15_4}, follows that
\begin{equation}\label{Round15_5}
\max_{0\leq{m}\leq{\ssy N-1}}|\emid^m|_{1,h}
+\max_{1\leq{m}\leq{\ssy N}}|\partial\vq^m|_{1,h}
\leq\,C_{\ddelta}\,(\tau^2+h^2+\|\emid^0\|_{1,h}).
\end{equation}
\par
Thus, \eqref{mod_BR_cnv_2} follows easily from \eqref{Round15_5},
\eqref{Les_Halles_4} and \eqref{Les_Halles_4H1}.
\end{proof}
%
%
Next, we present how the convergence result
of Theorem~\ref{modCB_Conv} changes when
$\Phi_{\ddelta}^{\frac{1}{2}}$ is a first order approximation
of $g(|u(t^{\half},\cdot)|^2)$.
%
%
\begin{thm}\label{mod_Conv_II}
Let $u_{\ssy\max}:=\max_{\ssy\QQ}|\uu|$,
$g_{\ssy\max}:=\max_{\ssy\QQ}|g(|\uu|^2)|$
and $\ddelta\ge\max\{u_{\ssy\max},g_{\ssy\max}\}$.
If 
\begin{equation}\label{Phi_init}
\Phi_{\ddelta}^{\frac{1}{2}}=g(|u^0|^2),
\end{equation}
then, there exist positive constants ${\widehat{\sf C}}^{\ssy{\sf A}}_{\ssy\ddelta}$
and ${\widehat{\sf C}}_{\ssy\ddelta}^{\ssy{\sf B}}$,
independent of $\tau$ and $h$, such that: if
$\tau\,{\widehat{\sf C}}^{\ssy{\sf A}}_{\ssy\ddelta}\leq\tfrac{1}{2}$, then
\begin{equation}\label{RFD_cnv_1}
\max_{0\leq{m}\leq{\ssy N-1}}\|g(\uu^{m+\half})-\Phi_{\ddelta}^{m+\half}\|_{1,h}
+\max_{0\leq{m}\leq{\ssy N}}\|\uu^m-V_{\ddelta}^m\|_{1,h}
\leq\,{\widehat{\sf C}}_{\ssy\ddelta}^{\ssy{\sf B}}\,(\tau+h^2)
\end{equation}
\end{thm}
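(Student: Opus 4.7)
The plan is to re-run the proof of Theorem~\ref{modCB_Conv}, observing that the only place where the form of $\Phi_{\ddelta}^{\half}$ enters is through the initial consistency quantity $\emid^0=g(|u^{\half}|^2)-\Phi_{\ddelta}^{\half}$. Indeed, the definitions \eqref{modCB1} and \eqref{modCB1a} of $V_{\ddelta}^0$ and $V_{\ddelta}^{\half}$ are unchanged by \eqref{Phi_init}, so the estimates \eqref{Les_Halles_3} and \eqref{Les_Halles_11} on $\|\vq^{\half}\|_{0,h}$ and $|\vq^{\half}|_{1,h}$ remain valid. Likewise, Steps~I\!I\!I are still given by \eqref{modCB3a}--\eqref{modCB3b}, so the error equation \eqref{ABBA_0} and every estimate derived from it (in particular Parts~3--15 of the previous proof) continue to hold verbatim, as they are all stated in terms of the generic quantity $\|\emid^0\|_{0,h}$ (or $|\emid^0|_{1,h}$) without assuming a specific size for it.

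The first step is therefore to replace \eqref{Les_Halles_4} and \eqref{Les_Halles_4H1} by direct Taylor estimates. With $\Phi_{\ddelta}^{\half}=g(|u^0|^2)$ we have
\begin{equation*}
\emid^0=g(|u^{\half}|^2)-g(|u^0|^2),
\end{equation*}
and since $u\in C^{1}_t$ we get $\|u^{\half}-u^0\|_{0,h}\leq C\tau$ and $|u^{\half}-u^0|_{1,h}\leq C\tau$. Applying \eqref{BasicHot1}, \eqref{BasicHot2} (with ${\mathfrak g}=g$ and $\varepsilon=u_{\ssy\max}$) together with \eqref{Jalapeno1} and \eqref{Jalapeno12} yields the replacement bounds
\begin{equation*}
\|\emid^0\|_{0,h}\leq\,C\,\tau,\qquad |\emid^0|_{1,h}\leq\,C\,\tau.
\end{equation*}

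With these two bounds in hand, I would simply substitute them into the key conclusions of the previous proof. Specifically, estimate \eqref{XE2019_5} gives
\begin{equation*}
\max_m\|\emid^m\|_{0,h}+\max_m\|\vq^m\|_{0,h}+\max_m\|\partial\vq^m\|_{0,h}\leq\,C_{\ddelta}\,(\tau^2+h^2+\tau),
\end{equation*}
estimate \eqref{Fight_1} yields $\max_n\|e^n\|_{1,h}\leq C_{\ddelta}(\tau+h^2)$, and in the $H^1$-loop the bound \eqref{Round15_5} becomes
\begin{equation*}
\max_m|\emid^m|_{1,h}+\max_m|\partial\vq^m|_{1,h}\leq\,C_{\ddelta}\,(\tau^2+h^2+\|\emid^0\|_{1,h})\leq\,C_{\ddelta}\,(\tau+h^2).
\end{equation*}
Combining the $L^2$ and $H^1$ control of $\emid^m$, together with the elliptic-projection bound \eqref{ELP_5} for $\varrho^m$, produces the asserted estimate \eqref{RFD_cnv_1}.

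No step is genuinely hard; the only bookkeeping point worth flagging is verifying that the degraded initial bound $\|\emid^0\|_{0,h}=O(\tau)$ (rather than $O(\tau^2+h^2)$) is benign in the discrete Gronwall arguments \eqref{XE2019_2} and \eqref{Round15_2}. This is immediate because those arguments carry $\|\emid^0\|_{0,h}$ and $|\emid^0|_{1,h}$ as free additive constants throughout and do not require any smallness on them beyond what is needed to absorb $\tau$-factors, which is precisely the constraint $\tau\,{\widehat{\sf C}}^{\ssy{\sf A}}_{\ssy\ddelta}\leq\tfrac{1}{2}$ stated in the theorem.
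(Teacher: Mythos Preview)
Your proposal is correct and follows essentially the same approach as the paper: both recognize that the only change is $\emid^0=g(|u^{\half}|^2)-g(|u^0|^2)=O(\tau)$ in the $\|\cdot\|_{1,h}$-norm and then re-run the machinery of Theorem~\ref{modCB_Conv}, in particular \eqref{Fight_1} and \eqref{Round15_5}. Your write-up is in fact more detailed than the paper's own proof, which is a one-line reference back to the previous argument; the only superfluous remark is the one about $\vq^{\half}$ via \eqref{Les_Halles_3}--\eqref{Les_Halles_11}, since under \eqref{Phi_init} the quantity $V_{\ddelta}^{\half}$ no longer enters the scheme and those estimates are simply not needed.
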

%
%
\begin{proof}
Here, for simplicity, we keep the notation and the notation convection
of the proof of Theorem~\ref{modCB_Conv}.
\par
Under the choice \eqref{Phi_init},
we obtain $\|\emid^0\|_{1,h}=O(\tau)$ and hence \eqref{RFD_cnv_1}
follows, easily, moving along the lines of the proof of 
Theorem~\ref{modCB_Conv} (see \eqref{Fight_1} and \eqref{Round15_5}).
\end{proof}
%
\subsection{Convergence of the (RFD) method}\label{Section_50}
%
%
In this section we show how we can use the convergence results for the (MRFD) scheme
to conclude convergence of the (RFD) method.
%
%
\begin{thm}\label{DR_Final}
Let $u_{\ssy\max}:=\max_{\ssy Q}|\uu|$,
$g_{\ssy\max}:=\max_{\ssy Q}|g(\uu)|$,
$\ddelta\ge\,2\,\max\{u_{\ssy\max},g_{\ssy\max}\}$,
${\sf C}^{\ssy\sf A}_{\ddelta}$, ${\sf C}^{\ssy\sf B}_{\ddelta}$
and ${\sf C}^{\ssy\sf C}_{\ddelta}$ be the constants
specified in Theorem~\ref{modCB_Conv}. If
$\tau\,{\sf C}^{\ssy\sf A}_{\ddelta}\leq\tfrac{1}{2}$ and
\begin{equation}\label{REL_Xmesh}
\max\{{\sf C}_{\ddelta}^{\ssy\sf B},
{\sf C}_{\ddelta}^{\ssy\sf C}\}\,\sqrt{{\sf L}}\,(\tau^2+h^{2})
\leq\,\tfrac{\ddelta}{2},
\end{equation}
then
\begin{equation}\label{BR_true_1}
\max_{0\leq{m}\leq{\ssy N-1}}\|g(|\uu^{m+\half}|^2)-\Phi^{m+\half}\|_{1,h}
+\|u^{\half}-W^{\half}\|_1
+\max_{0\leq{m}\leq{\ssy N}}\|\uu^m-\UUU^m\|_{1,h}
\leq\,C\,(\tau^2+h^2).
\end{equation}
\end{thm}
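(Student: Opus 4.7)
The plan is to run the modification argument in reverse: under the mesh condition \eqref{REL_Xmesh}, the (MRFD) approximations $V_\ddelta^m$ and $\Phi_\ddelta^{m+\half}$ must satisfy discrete $L^\infty$ bounds that force the mollifiers $\gamma_\ddelta$ and $\mol_\ddelta$ to reduce to the identity, so that the (MRFD) system collapses onto the (RFD) system. Unique solvability, supplied by Lemma~\ref{Plan_1}, then identifies the two sequences of approximations and \eqref{BR_true_1} becomes a mere transcription of \eqref{mod_BR_cnv_1}--\eqref{mod_BR_cnv_2}.

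First I would invoke Theorem~\ref{modCB_Conv} — applicable since $\ddelta\ge 2\max\{u_{\ssy\max},g_{\ssy\max}\}\ge\max\{u_{\ssy\max},g_{\ssy\max}\}$ and $\tau\,{\sf C}_\ddelta^{\ssy\sf A}\leq\half$ — to obtain the $H^1$-type estimates \eqref{mod_BR_cnv_1} and \eqref{mod_BR_cnv_2}. Then, combining the discrete Sobolev embedding \eqref{LH1}, applied to the errors $\uu^m-V_\ddelta^m\in\dspace$ and $g(|\uu^{m+\half}|^2)-\Phi_\ddelta^{m+\half}\in\rspacez$, with the triangle inequality and the trivial bounds $|\uu^m|_{\infty,h}\leq\uu_{\ssy\max}$ and $|g(|\uu^{m+\half}|^2)|_{\infty,h}\leq g_{\ssy\max}$, and finally with \eqref{REL_Xmesh}, I would deduce
\begin{equation*}
\max\bigl\{|V_\ddelta^{\half}|_{\infty,h},\,\max_{0\leq m\leq N}|V_\ddelta^m|_{\infty,h}\bigr\}\leq\ddelta,
\qquad
\max_{0\leq m\leq N-1}|\Phi_\ddelta^{m+\half}|_{\infty,h}\leq\ddelta.
\end{equation*}
The factor $2$ in the hypothesis $\ddelta\ge 2\max\{\uu_{\ssy\max},g_{\ssy\max}\}$ is exactly what supplies the $\ddelta/2$ of headroom consumed by the error term after \eqref{REL_Xmesh} is used to bound it, and the midpoint averages $(V_\ddelta^{n+1}+V_\ddelta^n)/2$ inherit the same bound by the triangle inequality.

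With these pointwise bounds, property \eqref{MLF_3} together with the piecewise definition \eqref{MLF_1} of $\mol_\ddelta$ yields $\gamma_\ddelta(V_\ddelta^{\half})=V_\ddelta^{\half}$, $\gamma_\ddelta(V_\ddelta^n)=V_\ddelta^n$, $\gamma_\ddelta((V_\ddelta^{n+1}+V_\ddelta^n)/2)=(V_\ddelta^{n+1}+V_\ddelta^n)/2$ and $\mol_\ddelta(\Phi_\ddelta^{n+\half})=\Phi_\ddelta^{n+\half}$. Substituting into \eqref{modCB1b}, \eqref{modCB2}, \eqref{modCB3a} and \eqref{modCB3b}, one sees that the sequence $(V_\ddelta^m,\Phi_\ddelta^{m+\half})$ satisfies exactly the recurrence \eqref{BRS_13}--\eqref{BRS_4} obeyed by $(\UUU^m,\Phi^{m+\half})$. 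Since \eqref{modCB1} and \eqref{modCB1a} coincide verbatim with \eqref{BRS_1} and \eqref{BRS_12}, we already have $V_\ddelta^0=\UUU^0$ and $V_\ddelta^{\half}=\UUU^{\half}$, and a straightforward induction exploiting the unique solvability of Lemma~\ref{Plan_1} yields $V_\ddelta^m=\UUU^m$ and $\Phi_\ddelta^{m+\half}=\Phi^{m+\half}$ for all admissible $m$. Then \eqref{BR_true_1} is just \eqref{mod_BR_cnv_1}--\eqref{mod_BR_cnv_2}.

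The main conceptual point requiring care is the absence of circularity: the estimates in Theorem~\ref{modCB_Conv} are unconditional with respect to the pointwise size of the modified approximations — this is precisely why the mollifier was introduced — so the pointwise bounds above can be read off \emph{after} the theorem has been applied, rather than being assumed beforehand. The only quantitative matter is ensuring that the constants ${\sf C}_\ddelta^{\ssy\sf B}$ and ${\sf C}_\ddelta^{\ssy\sf C}$ produced by Theorem~\ref{modCB_Conv} (depending on $\ddelta$ but not on $\tau$, $h$) are consistent with the mesh restriction \eqref{REL_Xmesh}, which is already built into the statement.
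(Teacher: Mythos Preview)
Your proposal is correct and follows essentially the same approach as the paper: derive $L^\infty$ bounds on $V_\ddelta^m$ and $\Phi_\ddelta^{m+\half}$ from \eqref{mod_BR_cnv_1}--\eqref{mod_BR_cnv_2} via \eqref{LH1} and \eqref{REL_Xmesh}, use \eqref{MLF_3} to strip the mollifiers, and identify the (MRFD) and (RFD) sequences. Your write-up is in fact slightly more careful than the paper's, since you make explicit the role of Lemma~\ref{Plan_1} (unique solvability of the (RFD) system) in the inductive identification step, which the paper leaves implicit.
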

%
%
%
%
\begin{proof}
The convergence estimates \eqref{mod_BR_cnv_1} and \eqref{mod_BR_cnv_2},
the mesh size condition \eqref{REL_Xmesh} and \eqref{LH1} imply that 
\begin{equation*}
\begin{split}
\max_{0\leq{m}\leq{\ssy N-1}}|\Phi^{m+\half}_{\ddelta}|_{\infty,h}
\leq&\,\max_{0\leq{m}\leq{\ssy N-1}}|g(|\uu^{m+\half}|^2)-
\Phi^{m+\half}_{\ddelta}|_{\infty,h}+g_{\ssy\max}\\
\leq&\,\sqrt{\sf L}\,
\max_{0\leq{m}\leq{\ssy N-1}}\|g(|\uu^{m+\half}|^2)-
\Phi^{m+\half}_{\ddelta}\|_{1,h}+\tfrac{\ddelta}{2}\\
\leq&\,\sqrt{{\sf L}}\,{\sf C}_{\ddelta}^{\ssy\sf C}
\,(\tau^2+h^2)+\tfrac{\ddelta}{2}\\
\leq&\,\ddelta
\end{split}
\end{equation*}
and
\begin{equation*}
\begin{split}
\max\big\{|V_{\ddelta}^{\half}|_{\infty,h},
\max_{0\leq{m}\leq{\ssy N}}|V^{m}_{\ddelta}|_{\infty,h}\big\}
\leq&\,\max\big\{|u^{\half}-V_{\ddelta}^{\half}|_{\infty,h},
\max_{0\leq{m}\leq{\ssy N}}|\uu^m-V_{\ddelta}^m|_{\infty,h}\big\}
+u_{\ssy\max}\\
\leq&\,\sqrt{\sf L}\,\max\big\{\|u^{\half}-V_{\ddelta}^{\half}\|_{1,h},
\max_{0\leq{m}\leq{\ssy N-1}}\|\uu^m-V_{\ddelta}^m\|_{1,h}\}+\tfrac{\ddelta}{2}\\
\leq&\,\sqrt{{\sf L}}\,{\sf C}_{\ddelta}^{\ssy\sf B}
\,(\tau^2+h^2)+\tfrac{\ddelta}{2}\\
\leq&\,\ddelta
\end{split}
\end{equation*}
which, along with \eqref{MLF_1} and \eqref{MLF_3},
yields
$\gamma_{\ddelta}(V_{\ddelta}^{\half})
=V_{\ddelta}^{\half}$, 
$\gff_{\delta}(\Phi_{\ddelta}^{m+\half})=\Phi_{\ddelta}^{m+\half}$
for $m=0,\dots,N-1$, and
$\gamma_{\ddelta}\left(\tfrac{V_{\ddelta}^{n+1}+V_{\ddelta}^n}{2}\right)
=\tfrac{V_{\ddelta}^{n+1}+V_{\ddelta}^n}{2}$ for $n=0,\dots,N-1$.
Thus, we conclude that for $\delta=\ddelta$ the (MRFD) approximations
are (RFD) approximations, i.e. $W^{\half}=\UUU^{\half}_{\ddelta}$,
$\UUU^n=V^n_{\ddelta}$ for $n=0,\dots,N$, and
$\Phi^{n+\half}=\Phi^{n+\half}_{\ddelta}$ for $n=0,\dots,N-1$.
%
%
Thus, we obtain \eqref{BR_true_1} as a simple outcome of \eqref{mod_BR_cnv_1}
and \eqref{mod_BR_cnv_2}.
\end{proof}
%
%
%
%
%
\begin{thm}\label{CNV_Final_2}
Let $u_{\ssy\max}:=\max_{\ssy Q}|\uu|$,
$g_{\ssy\max}:=\max_{\ssy Q}|g(\uu)|$,
$\ddelta\ge\,2\,\max\{u_{\ssy\max},g_{\ssy\max}\}$,
${\widehat{\sf C}}^{\ssy\sf A}_{\ddelta}$ and
${\widehat{\sf C}}^{\ssy\sf B}_{\ddelta}$ be the constants
specified in Theorem~\ref{mod_Conv_II},
$\Phi^{\half}=g(|u^0|^2)$, $\tau\,{\widehat{\sf C}}^{\ssy\sf A}_{\ddelta}\leq\tfrac{1}{2}$
and
${\widehat{\sf C}}_{\ddelta}^{\ssy\sf B}
\,\sqrt{{\sf L}}\,(\tau+h^{2})\leq\,\tfrac{\ddelta}{2}$.
Then, it holds that
\begin{equation}\label{kourelou_2}
\max_{0\leq{m}\leq{\ssy N-1}}\|g(|\uu^{m+\half}|^2)-\Phi^{m+\half}\|_{1,h}
\leq\,{\widehat{\sf C}}^{\ssy\sf B}_{\ddelta}\,(\tau+h^2).
\end{equation}
Also, there exists a positive constants
${\widehat{{\sf C}}}_{\ddelta}^{\ssy\sf D}\ge{\widehat{{\sf C}}}_{\ddelta}^{\ssy\sf A}$
and ${\widehat{{\sf C}}}_{\ddelta}^{\ssy\sf E}$,
independent of $\tau$ and $h$, such that: if
$\tau\,{\widehat{{\sf C}}}_{\ddelta}^{\ssy\sf D}\leq\half$, then
\begin{equation}\label{kourelou_3}
\max_{0\leq{m}\leq{\ssy N}}\|\uu^m-\UUU^m\|_{1,h}
\leq\,{\widehat{{\sf C}}}_{\ddelta}^{\ssy\sf E}\,(\tau^2+h^2).
\end{equation}
\end{thm}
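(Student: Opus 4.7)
The plan is to split the proof into two parts corresponding to \eqref{kourelou_2} and \eqref{kourelou_3}. For \eqref{kourelou_2}, I would simply apply Theorem~\ref{mod_Conv_II} directly (it treats exactly the initialization $\Phi_{\ddelta}^{1/2}=g(|u^0|^2)$), which yields the (MRFD) bound $\max_m\|g(|u^{m+\half}|^2)-\Phi_{\ddelta}^{m+\half}\|_{1,h}+\max_m\|u^m-V_{\ddelta}^m\|_{1,h}\le\widehat{\sf C}_{\ddelta}^{\ssy\sf B}(\tau+h^2)$. The mesh condition $\widehat{\sf C}_{\ddelta}^{\ssy\sf B}\sqrt{\sf L}(\tau+h^2)\le\ddelta/2$ combined with the embedding \eqref{LH1} forces $|V_{\ddelta}^m|_{\infty,h},|\Phi_{\ddelta}^{m+\half}|_{\infty,h}\le\ddelta$, so by \eqref{MLF_1} and \eqref{MLF_3} the mollifier acts trivially on every iterate; the (MRFD) and (RFD) sequences coincide exactly as in the proof of Theorem~\ref{DR_Final}, giving \eqref{kourelou_2}.

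For \eqref{kourelou_3} the key idea is to compare $(\UUU^m,\Phi^{m+\half})$ with an auxiliary (RFD) sequence $(\widetilde\UUU^m,\widetilde\Phi^{m+\half})$ generated from the same Step~1 output $\UUU^{\half}$ but with the \emph{second-order} initialization $\widetilde\Phi^{\half}:=g(|\UUU^{\half}|^2)$ (the version covered by Theorem~\ref{DR_Final}). Then $\widetilde\UUU^0=\UUU^0$, $\widetilde\UUU^{\half}=\UUU^{\half}$, and by Theorem~\ref{DR_Final} one has $\max_m\|u^m-\widetilde\UUU^m\|_{1,h}=O(\tau^2+h^2)$. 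Setting $\delta\UUU^m:=\widetilde\UUU^m-\UUU^m$ and $\delta\Phi^{m+\half}:=\widetilde\Phi^{m+\half}-\Phi^{m+\half}$, Lemma~\ref{LemmaX4} and a Taylor expansion of $u(t^{\half})-u(t_0)$ give
$$\|\delta\Phi^{\half}\|_{1,h}=\|g(|\UUU^{\half}|^2)-g(|u^0|^2)\|_{1,h}\le C\bigl(\|\UUU^{\half}-u^{\half}\|_{1,h}+\|u^{\half}-u^0\|_{1,h}\bigr)=O(\tau+h^2),$$
so a direct one-step energy estimate (exactly as in Part~14 of the proof of Theorem~\ref{modCB_Conv}) yields $\|\delta\UUU^1\|_{1,h}\le C\tau\|\delta\Phi^{\half}\|_{1,h}=O(\tau^2+h^2)$. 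It then suffices to propagate $\|\delta\UUU^m\|_{1,h}=O(\tau^2+h^2)$ for $m\ge2$.

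The essential structural point is that both schemes satisfy the Besse recursion, whence for $n\ge 1$
$$\delta\Phi^{n+\half}+\delta\Phi^{n-\half}=2\bigl[g(|\widetilde\UUU^n|^2)-g(|\UUU^n|^2)\bigr],$$
and the right-hand side is $O(\|\delta\UUU^n\|_{1,h})$ by Lemma~\ref{LemmaX4}. Although \eqref{kourelou_2} only gives $\|\delta\Phi^{n+\half}\|_{1,h}=O(\tau+h^2)$ individually, the \emph{sum} of two consecutive half-step errors is second-order. Exploiting this, I subtract the error equations for step $n$ and step $n-1$ (as in the vector operational analysis of Part~9/15 of the proof of Theorem~\ref{modCB_Conv}) to produce a two-step equation for $\delta\UUU^{n+1}-\delta\UUU^{n-1}$ whose nonhomogeneous term is
$$S^n=i\tau\Bigl[\tfrac{\delta\Phi^{n+\half}+\delta\Phi^{n-\half}}{2}\otimes\tfrac{\UUU^{n+1}+2\UUU^n+\UUU^{n-1}}{2}+\tfrac{\delta\Phi^{n+\half}-\delta\Phi^{n-\half}}{2}\otimes\tfrac{\UUU^{n+1}-\UUU^{n-1}}{2}\Bigr].$$
The first summand is $O\bigl(\tau\,\|\delta\UUU^n\|_{1,h}\bigr)$ by the structural identity; the second summand is $O\bigl(\tau(\tau+h^2)^2\bigr)$ because $\|\UUU^{n+1}-\UUU^{n-1}\|_{1,h}=O(\tau+h^2)$ (using \eqref{kourelou_2} and the smoothness of $u$) and $\|\delta\Phi^{\cdot}\|_{1,h}=O(\tau+h^2)$. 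Thus the local two-step defect is genuinely $O(\tau^3+\tau h^2)+O(\tau\|\delta\UUU^n\|_{1,h})$, which after summing and applying Lemma~\ref{operator_lemma} and a standard discrete Gronwall inequality closes to $O(\tau^2+h^2)$, provided $\tau$ is small enough; this fixes the new threshold $\widehat{\sf C}_{\ddelta}^{\ssy\sf D}$.

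The main obstacle is carrying this out in the $H^1$ seminorm: one has to track products of $\delta\Phi$ with $\UUU^{n+1}+\UUU^n$ and $\UUU^{n+1}-\UUU^{n-1}$ using the sharp product estimates \eqref{BasicHot2} and \eqref{Jalapeno12}, and control the discrete $W^{1,\infty}$-seminorm of $g(|\UUU^n|^2)$ and $\UUU^n$ via inverse estimates combined with the already-proven \eqref{kourelou_2}, exactly as in Parts~11--15 of the proof of Theorem~\ref{modCB_Conv}. Once this is in place, the triangle inequality $\|u^m-\UUU^m\|_{1,h}\le\|u^m-\widetilde\UUU^m\|_{1,h}+\|\delta\UUU^m\|_{1,h}$ delivers \eqref{kourelou_3} with $\widehat{\sf C}_{\ddelta}^{\ssy\sf E}$ depending only on $u$, $g$ and $\ddelta$.
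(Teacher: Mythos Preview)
Your treatment of \eqref{kourelou_2} is correct and coincides with the paper's: apply Theorem~\ref{mod_Conv_II}, then use the mesh condition and \eqref{LH1} to remove the mollifier exactly as in Theorem~\ref{DR_Final}.

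For \eqref{kourelou_3} your route is genuinely different from the paper's, and the key structural insight you isolate---that the \emph{sum} $\delta\Phi^{n+\half}+\delta\Phi^{n-\half}=2\bigl[g(|\widetilde\UUU^n|^2)-g(|\UUU^n|^2)\bigr]$ is second order even though each term is only first order---is exactly the right one, and is equivalent to what the paper exploits through \eqref{ABBA_7}. The paper, however, does not introduce an auxiliary sequence $\widetilde\UUU$ at all: it works directly with $\vq^m={\sf R}_h[u(t_m,\cdot)]-\UUU^m$ and the one-step error equation \eqref{ABBA_0}, takes the $(\cdot,\cdot)_{0,h}$-inner product with $\Delta_h(\vq^{n+1}+\vq^n)$, and sums in $n$. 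The only delicate contribution is $\sum_n{\mathcal Z}^{4,n}$, and a single index shift (Abel summation) rearranges it into the three pieces ${\mathcal Z}_{\ssy A}^m,{\mathcal Z}_{\ssy B}^m,{\mathcal Z}_{\ssy C}^m$ of \eqref{kourelou_12}: a boundary term, a bulk term carrying $\emid^{n}+\emid^{n-1}$ (second order by \eqref{ABBA_7}), and a bulk term carrying $u^{n+1}-u^{n-1}=O(\tau)$ which absorbs the first-order $\emid^n$. A standard Gronwall then closes at $O(\tau^2+h^2)$. This is considerably shorter and avoids the detour through $\widetilde\UUU$ and Theorem~\ref{DR_Final}.

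There is also a gap in your two-step stability step. Adding the error equations at levels $n$ and $n-1$ gives
\[
\delta\UUU^{n+1}-\delta\UUU^{n-1}-\tfrac{\iii\tau}{2}\Delta_h(\delta\UUU^{n+1}+2\,\delta\UUU^n+\delta\UUU^{n-1})=S^n+(\text{terms in }\Phi\otimes\delta\UUU),
\]
and your $S^n$ omits the second group. More importantly, this identity is equivalent to $A_h(\delta\UUU^{n+1}+\delta\UUU^n)=T_h(\delta\UUU^{n}+\delta\UUU^{n-1})+(\cdots)$, i.e.\ a one-step recursion in $E^n:=\delta\UUU^{n+1}+\delta\UUU^n$. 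Lemma~\ref{operator_lemma} then controls $|E^n|_{1,h}$, but $|\delta\UUU^m|_{1,h}$ is \emph{not} recoverable from $(|E^k|_{1,h})_k$ without an additional argument (the alternating telescoping $\delta\UUU^m=\sum_{k}(-1)^{m-k}E^{k-1}+(-1)^m\delta\UUU^0$ does not close). The $G$-matrix machinery of Parts~9 and 15 does not apply either, since that treats discrete time derivatives $\partial\vq^{n}$, not the values themselves. Your program becomes valid if, instead of the two-step equation, you run the one-step $|\cdot|_{1,h}$-energy on the equation for $\delta\UUU^{n+1}-\delta\UUU^n$ and perform the same Abel summation on the $\delta\Phi$-term; but at that point you are essentially reproducing the paper's argument for $\delta\UUU$ rather than for $\vq$, and the auxiliary sequence $\widetilde\UUU$ is no longer needed.
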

%
%
%
%
%
\begin{proof}
Here, for simplicity, we keep the notation and the notation convection
of the proof of Theorem~\ref{modCB_Conv}.
\par
Using our assumptions and moving along the lines of the proof of Theorem~\ref{DR_Final},
we conclude that $\gamma_{\ddelta}(V_{\ddelta}^{\half})
=V_{\ddelta}^{\half}$, 
$\gff_{\delta}(\Phi_{\ddelta}^{m+\half})=\Phi_{\ddelta}^{m+\half}$
for $m=0,\dots,N-1$, and
$\gamma_{\ddelta}\left(\tfrac{V_{\ddelta}^{n+1}+V_{\ddelta}^n}{2}\right)
=\tfrac{V_{\ddelta}^{n+1}+V_{\ddelta}^n}{2}$ for $n=0,\dots,N-1$.
Thus, for $\delta=\ddelta$ the (MRFD) approximations
are (RFD) approximations, and \eqref{RFD_cnv_1} inherits
\eqref{kourelou_2} and 
\begin{equation}\label{kourelou_5}
\max_{0\leq{m}\leq{\ssy N}}\|\uu^m-\UUU^m\|_{1,h}
=\max_{0\leq{m}\leq{\ssy N}}\|\uu^m-V_{\ddelta}^m\|_{1,h}
\leq\,C_{\ddelta}\,(\tau+h^2).
\end{equation}
\par
Taking the $(\cdot,\cdot)_{0,h}-$inner
product of \eqref{ABBA_0} with $\Delta_h(\vq^{n+1}+\vq^{n})$
and then using \eqref{NewEra1} and keeping the real parts of
the relation obtained, it follows that
\begin{equation*}
|\vq^{n+1}|_{1,h}^2-|\vq^n|_{1,h}^2
=\sum_{\ell=1}^4{\mathcal Z}^{\ell,n},
\quad n=0,\dots,N-1,
\end{equation*}
where
\begin{equation*}
{\mathcal Z}^{\ell,n}:=\tau\,\Ree[
(\!\!(\delta_h{\mathcal B}^{\ell,n},\delta_h(\vq^{n+1}+\vq^n))\!\!)_{0,h}],
\quad\ell=1,2,3,4.
\end{equation*}
Then, we sum with respect to $n$ (from $n=0$ up to $n=m$) 
to get
\begin{equation}\label{kourelou_10}
|\vq^{m+1}|_{1,h}^2=\sum_{n=0}^m\sum_{\ell=1}^4
{\mathcal Z}^{\ell,n}, \quad m=0,\dots,N-1.
\end{equation}
In view of \eqref{Beta_Defs}, \eqref{ELP_6} and \eqref{SHELL_02H1},
after the application of the Cauchy-Schwarz inequality, we get
\begin{equation}\label{kourelou_11}
\begin{split}
\sum_{n=0}^m\left({\mathcal Z}^{1,n}+{\mathcal Z}^{2,n}\right)
\leq&\,C\,\tau\,\sum_{n=0}^m(\tau^2+h^2)\,(|\vq^{n+1}|_{1,h}+|\vq^n|_{1,h})\\
\leq&\,C\,\tau\,\sum_{n=0}^m
\left[(\tau^2+h^2)^2+|\vq^{n+1}|_{1,h}^2+|\vq^n|_{1,h}^2\right]\\
\leq&\,C\,
\left[(\tau^2+h^2)^2+\tau\,|\vq^{m+1}|_{1,h}^2
+\tau\,\sum_{n=0}^m|\vq^n|_{1,h}^2\right],\quad m=0,\dots,N-1.\\
\end{split}
\end{equation}
Combining \eqref{Beta_Defs}, \eqref{MLF_1}, \eqref{LH1}, 
\eqref{kourelou_2} and \eqref{ELP_5}, we have
\begin{equation}\label{kourelou_30}
\begin{split}
\sum_{n=0}^m{\mathcal Z}^{3,n}\leq&\,\tfrac{\tau}{2}\,\sum_{n=0}^m
\left[|\gff_{\ddelta}(\Phi_{\ddelta}^{n+\half})|_{\infty,h}\,
|e^{n+1}+e^{n}|_{1,h}\right.\\
&\hskip1.5truecm\left.+|\gff_{\ddelta}(\Phi^{n+\half}_{\ddelta})|_{1,h}\,
|e^{n+1}+e^{n}|_{\infty,h}\right]\,|\vq^{n+1}+\vq^n|_{1,h}\\
\leq&\,C_{\ddelta}\,\tau\,\sum_{n=0}^m\left[1+\sup_{\ssy\rset}|\gff_{\ddelta}'|
\,|\Phi^{n+\half}_{\ddelta}|_{1,h}\right]\,
(|e^{n+1}|_{1,h}+|e^{n}|_{1,h})\,(|\vq^{n+1}|_{1,h}+|\vq^n|_{1,h})\\
\leq&\,C_{\ddelta}\,\tau\,\sum_{n=0}^m\,(|\varrho^{n+1}|_{1,h}+|\varrho^n|_{1,h}
+|\vq^{n+1}|_{1,h}+|\vq^n|_{1,h})
\,(|\vq^{n+1}|_{1,h}+|\vq^n|_{1,h})\\
\leq&\,C_{\ddelta}\,\tau\,\sum_{n=0}^m\,(h^2+|\vq^{n+1}|_{1,h}+|\vq^n|_{1,h})
\,(|\vq^{n+1}|_{1,h}+|\vq^n|_{1,h})\\
\leq&\,C_{\ddelta}\,\left[h^4+\tau\,|\vq^{m+1}|_{1,h}^2
+\tau\,\sum_{n=0}^m|\vq^n|_{1,h}^2\right],\quad m=0,\dots,N-1.\\
\end{split}
\end{equation}
Now, from \eqref{Beta_Defs}, it follows that
\begin{equation}\label{kourelou_12}
\sum_{n=0}^m{\mathcal Z}^{4,n}={\mathcal Z}^{m}_{\ssy A}
+{\mathcal Z}^{m}_{\ssy B}+{\mathcal Z}^{m}_{\ssy C},
\quad m=0,\dots,N-1,
\end{equation}
where
\begin{equation*}
\begin{split}
{\mathcal Z}_{\ssy A}^m:=&\,-\tfrac{\tau}{2}
\,\Imm\left[(\!\!(\delta_h(\emid^{m}\otimes(u^{m+1}+u^{m})),
\delta_h\vq^{m+1})\!\!)_{0,h}\right],\\
{\mathcal Z}_{\ssy B}^m:=&\,-\tfrac{\tau}{2}
\,\Imm\left[\,
\sum_{n=1}^{m}(\!\!(\delta_h((\emid^{n-1}+\emid^n)\otimes(u^{n}+u^{n-1})),
\delta_h\vq^{n})\!\!)_{0,h}\,\right],\\
{\mathcal Z}_{\ssy C}^m:=&\,-\tfrac{\tau}{2}\,\Imm\left[\,
\sum_{n=1}^m(\!\!(\delta_h(\emid^n\otimes(u^{n+1}-u^{n-1})),
\delta_h\vq^{n})\!\!)_{0,h}\,\right].\\
\end{split}
\end{equation*}
Using the Cauchy-Schwarz inequality, \eqref{kourelou_2},
\eqref{ABBA_7}, \eqref{BasicHot1}, \eqref{BasicHot2}, 
\eqref{kourelou_5}, \eqref{LH1}, \eqref{SHELL_21}, 
\eqref{SHELL_21H1} and \eqref{ELP_5}, we have
\begin{equation}\label{kourelou_14}
\begin{split}
{\mathcal Z}_{\ssy A}^m\leq&\,\tfrac{\tau}{2}\,\left[\,
|\!|\!|\delta_h(u^m+u^{m+1})|\!|\!|_{\infty,h}\,\|\emid^m\|_{0,h}+
|u^{m+1}+u^m|_{\infty,h}\,|\emid^m|_{1,h}\right]
\,|\vq^{m+1}|_{1,h}\\
\leq&\,C\,\tau\,\|\emid^{m}\|_{1,h}\,|\vq^{m+1}|_{1,h}\\
\leq&\,C_{\ddelta}\,(\tau^2+\tau\,h^2)\,|\vq^{m+1}|_{1,h}\\
\leq&\,C_{\ddelta}\,(\tau^2+\tau\,h^2)^2+\tfrac{1}{2}
\,|\vq^{m+1}|_{1,h}^2,\quad m=0,\dots,N-1,\\
\end{split}
\end{equation}
\begin{equation}\label{kourelou_13}
\begin{split}
{\mathcal Z}_{\ssy C}^m\leq&\,\tfrac{\tau}{2}\,\sum_{n=1}^m
\left[\,
|\!|\!|\delta_h(u^{n+1}-u^{n-1})|\!|\!|_{\infty,h}\,\|\emid^n\|_{0,h}+
|u^{n+1}-u^{n-1}|_{\infty,h}\,|\emid^n|_{1,h}\right]\,|\vq^n|_{1,h}\\
\leq&\,C\,\tau\,\sum_{n=1}^m\tau\,\|\emid^n\|_{1,h}\,|\vq^n|_{1,h}\\
\leq&\,C_{\ddelta}\,\tau\,\sum_{n=0}^m(\tau^2+\tau\,h^2)\,|\vq^n|_{1,h}\\
\leq&\,C_{\ddelta}\,\left[(\tau^2+\tau\,h^2)^2
+\tau\,\sum_{n=0}^m|\vq^n|_{1,h}^2\right],
\quad m=1,\dots,N-1,\\
\end{split}
\end{equation}
and
\begin{equation}\label{kourelou_20}
\begin{split}
{\mathcal Z}_{\ssy B}^m\leq&\,C\,\tau\,\sum_{n=0}^m
\left(\,|\emid^n+\emid^{n-1}|_{1,h}+\|\emid^n+\emid^{n-1}\|_{0,h}\right)
\,\,|\vq^n|_{1,h}\\
\leq&\,C\,\tau\,\sum_{n=0}^m
\left(\,\|{\sf r}^n\|_{1,h}+\|g(|u^n|^2)-g(|V_{\ddelta}^n|^2)\|_{1,h}\right)
\,\,|\vq^n|_{1,h}\\
\leq&\,C_{\ddelta}\,\tau\,\sum_{n=0}^m(\,\tau^2+\|e^n\|_{1,h})\,|\vq^n|_{1,h}\\
\leq&\,C_{\ddelta}\,\tau\,\sum_{n=0}^m(\,\tau^2+h^2+|\vq^n|_{1,h})\,|\vq^n|_{1,h}\\
\leq&\,C_{\ddelta}\,\left[\,(\tau^2+h^2)^2
+\tau\,\sum_{n=0}^m|\vq^n|_{1,h}^2\,\right],
\quad m=1,\dots,N-1.\\
\end{split}
\end{equation}
\par
Now, from \eqref{kourelou_10}, \eqref{kourelou_11}, \eqref{kourelou_30},
\eqref{kourelou_12}, \eqref{kourelou_14}, \eqref{kourelou_13} and
\eqref{kourelou_20}, we conclude that there exists a positive constant
${\widehat{\sf C}}_{\ddelta}\ge\half\,{\widehat{\sf C}}_{\ddelta}^{\ssy A}$ such that
\begin{equation*}
(\tfrac{1}{2}-\tau\,{\widehat{\sf C}}_{\ddelta})\,|\vq^{m+1}|_{1,h}^2
\leq\,C_{\ddelta}\,\left[(\tau^2+h^2)^2+\tau\,\sum_{n=0}^m|\vq^n|_{1,h}^2\right],
\quad m=0,\dots,N-1.
\end{equation*}
Assuming that $2\,\tau\,{\widehat{\sf C}}_{\ddelta}\leq\tfrac{1}{2}$
and applying a discrete Gronwal argument we arrive at
\begin{equation*}\label{kourelou_80}
\max_{0\leq{m}\leq{\ssy N}}|\vq^{m}|_{1,h}
\leq\,C_{\ddelta}\,(\tau^2+h^2),
\end{equation*}
which, along with \eqref{ELP_5} and \eqref{dpoincare},
yields \eqref{kourelou_3}.
\end{proof}
%
%

%
\end{document}